\setlist[enumerate]{itemsep=0mm,parsep=2mm}
\newtheorem{theorem}{Theorem}[section]
\newtheorem{proposition}[theorem]{Proposition}
\newtheorem{lemma}[theorem]{Lemma}
\newtheorem{claim}[theorem]{Claim}
\newtheorem{cor}[theorem]{Corollary}
\newtheorem{conj}[theorem]{Conjecture}
\theoremstyle{definition}
\newtheorem{question}{Question}
\newcommand{\R}{{\mathbb R}}
\newcommand{\cR}{{\mathcal R}}
\DeclareMathOperator{\rc}{rc}
\newcommand{\mycaption}[1]{
	\refstepcounter{figure}
	\addcontentsline{lof}{figure}{\protect\numberline{\textbf{Fig. \thefigure.}}#1}
	\par\addvspace{\abovecaptionskip}
	\centering\textbf{Fig. \thefigure.} #1
	\par\addvspace{\belowcaptionskip}
}
\newcounter{romlemma}
\newaliascnt{romlemmaalias}{romlemma}
\newcommand{\E}{\mathbb{E}}
\newcommand{\Prob}{\mathbb{P}}
\newcommand{\rup}{r ^+}
\newcommand{\dof}{\mathrm{dof}}
\title{\bf Degree Sum Conditions for Graph Rigidity}
\author{Tibor Jord\'an\footnote{Department of Operations Research, ELTE E\"otv\"os Lor\'and University, and HUN-REN--ELTE Egerv\'ary Research Group
		on Combinatorial Optimization, P\'azm\'any P\'eter s\'et\'any 1/C, 1117 Budapest, Hungary.
		e-mail: {\tt tibor.jordan@ttk.elte.hu}}  \and Xuemei Liu\footnote{School of Mathematics and Statistics,
		Northwestern Polytechnical University,
		Xi'an 710129, Shaanxi, P.R. China. e-mail: {\tt xuemeiliu@nwpu.edu.cn} } \and Soma Vill\'anyi\footnote{Department of Operations Research, ELTE E\"otv\"os Lor\'and University, and HUN-REN--ELTE Egerv\'ary Research Group
		on Combinatorial Optimization, P\'azm\'any P\'eter s\'et\'any 1/C, 1117 Budapest, Hungary.
		e-mail: {\tt soma.villanyi@ttk.elte.hu}}}
\date{October 23, 2025}
\begin{document}
	
\maketitle

	\begin{abstract}
		We  study sufficient conditions for the generic rigidity 
		of a graph $G$ expressed in terms of (i) its minimum degree $\delta(G)$, or (ii)
		the parameter
		$\eta(G)=\min_{uv\notin E}(\deg(u)+\deg(v))$.
		For each case, we seek the smallest integers $f(n,d)$ (resp.\ $g(n,d)$) such that every $n$-vertex graph $G$ with $\delta(G)\geq f(n,d)$ (resp.\ $\eta(G)\geq g(n,d)$) is rigid in $\R^d$.
		
		  Krivelevich, Lew, and
		Michaeli conjectured that there is a constant $K>0$ such that $f(n,d)\leq \frac{n}{2}+Kd$ for all pairs $n,d$.
		We give an affirmative answer to this conjecture
        by proving that $K=1$ suffices.
		For $n\geq 29d$, we obtain the exact result	$f(n,d)=\lceil\frac{n+d-2}{2} \rceil$.
		
		Next, we prove that $g(n,d)\leq n+3d$ for all pairs $n,d$,
		and establish
		$g(n,d)=n+d-2$ 
		when
		$n\geq d(d+2)$.
		For $d=2,3,$ we determine the exact values of $f(n,d)$ and $g(n,d)$ for all $n$, confirming another conjecture of Krivelevich, Lew, and
		Michaeli in these low-dimensional special cases.
		
		As an application, we prove that the Erdős-Rényi random graph $G(n,1/2)$ is a.a.s.\ rigid in $\R^d$ for $d=d(n)\sim \frac{7}{32} n$. This result provides the first linear lower bound for $d(n)$, and it answers a question of Peled and Peleg.
	\end{abstract}

	\section{Introduction}

	A $d$-dimensional (bar-and-joint) {\it framework} $(G,p)$ consists of a simple graph $G=(V,E)$ and
	a map $p:V\to \R^d$.
	Such a framework is called {\it rigid} if there is no
	continuous motion $\hat p:[0,1] \to (\R^d)^V$ of the vertices with $\hat p(0)=p$ such that the function $||\hat p_u(t)-\hat p_v(t)||$ is constant for every edge $uv\in E$, but not for all pairs $u,v\in V$. 
	We consider {\it generic} frameworks, in which the
	set of coordinates of $p$ is algebraically independent over $\mathbb{Q}$.
	It is known that, for every fixed $d\geq 1$, the rigidity of a generic framework $(G,p)$ is determined by the graph $G$. We therefore call $G$ {\it $d$-rigid} if every (equivalently, some) generic $d$-dimensional framework $(G,p)$
	 is rigid.

	The first of the two main questions we study in this paper is as follows:
	\begin{question}
		For $n,d\in \mathbb{N}$, what is the smallest integer $f(n,d)$ such that every $n$-vertex graph $G$ with minimum degree $\delta(G)\geq f(n,d)$ is $d$-rigid?
	\end{question}
	
	This problem and its variants were first studied for fixed dimensions $d=2,3$, in the context of global rigidity \cite{B,JJchapter} and matrix completion problems \cite{JJT}.
	Our work is primarily motivated by recent papers of Krivelevich, Lew, and Michaeli \cite{KLM1,KLM}, where the authors
	obtained various bounds on $f(n,d)$, for all $d\geq 1$.
	We first review these developments and outline our contribution.

	For $n\leq d+1$ only complete graphs are $d$-rigid, implying $f(n,d)= n-1$. 
    So we shall frequently assume $n\geq d+2$. 
    It is a well-known fact that $\delta(G) \geq \lceil\frac{n+d-2}{2}\rceil$ is a sufficient condition for $d$-connectivity, and that this bound is best possible. 
	Since $d$-rigid graphs are $d$-connected, we obtain a straightforward yet useful lower bound: \begin{equation}\label{bound:d-2:2}
		\left\lceil\frac{n+d-2}{2}\right\rceil \leq f(n,d).
	\end{equation}
	It follows similarly that $f(n,d)\leq \left\lceil(n+d(d+1)-2)/2\right\rceil$ from the fact that every $d(d+1)$-connected graph is $d$-rigid \cite{vill}.
	More elaborate upper bounds %
	 were established by Krivelevich, Lew, and Michaeli \cite{KLM1}. They proved that
	$f(n,d)\leq (n+3d\log n)/2$ for all pairs $n,d$,
	and showed that this bound can be improved if $n$ is sufficiently large:
	for  $n\geq \Omega(d^2)\cdot \log^2 n$, we have $f(n,d)\leq \frac{n}{2}+d-1$. (See \cite[Theorems 1.14, 1.15]{KLM1}.)
	These results suggest that the function $f(n,d)-\frac{n}{2}$ may depend rather weakly on $n$. In fact, the authors of \cite{KLM1} conjectured that it is bounded above by a linear function of $d$:
	\begin{conj}\cite[Conjecture 1.16]{KLM1}\label{conj:KLM1}
		There exists a constant $K>0$ such that $f(n,d)\leq \frac{n}{2}+Kd$ for all $n,d\in \mathbb{N}$.  
	\end{conj}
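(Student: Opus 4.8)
The plan is to prove the sharp quantitative form $f(n,d)\le \tfrac{n}{2}+d$ by induction on $n$, using only three standard facts from rigidity theory: (T1) the $0$-extension move—if $H$ is $d$-rigid and $v$ is a new vertex with at least $d$ neighbours in $H$, then $H+v$ is $d$-rigid; (T2) the gluing lemma—if $H_1,H_2$ are $d$-rigid and $|V(H_1)\cap V(H_2)|\ge d$, then $H_1\cup H_2$ is $d$-rigid; and (T3) the fact that $K_m$ is $d$-rigid for $m\ge d+1$. The role of (T1) is to pass from $G-v$ to $G$: since $\delta(G)\ge d$, every vertex $v$ has at least $d$ neighbours, all lying in $G-v$, so once $G-v$ is shown $d$-rigid, so is $G$. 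Thus the entire difficulty is to delete a vertex while preserving the degree hypothesis.

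\emph{Base case and generic step.} For $n\le 2d+2$ the hypothesis $\delta(G)\ge \tfrac n2+d\ge n-1$ forces $G=K_n$, which is $d$-rigid by (T3); this is the bottom of the induction. For $n>2d+2$ I want a vertex $v$ with $\delta(G-v)\ge \tfrac{n-1}{2}+d$, so that induction applies to $G-v$ and (T1) finishes the job. Deleting $v$ lowers each neighbour's degree by exactly $1$, while the threshold drops by $\tfrac12$; a short parity check shows that when $n$ is odd the $n-1$ threshold is met automatically for every choice of $v$ (the unit loss is absorbed by the drop in the bound), so the step is free. When $n$ is even I instead need $v$ with no neighbour of minimum degree: writing $D=\{u:\deg(u)=\delta(G)\}$, any $v$ with $N(v)\cap D=\varnothing$ is valid, and such a $v$ exists unless $D$ is a dominating set with no isolated vertex inside $G[D]$.

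The main obstacle is exactly this exceptional even-$n$ configuration, where the minimum-degree vertices are pervasive and mutually linked, so that no single deletion preserves the bound; this is where the real work—and the optimal constant $K=1$—resides. I would attack it by a two-vertex deletion: choosing a \emph{non-adjacent} pair $u,v$ with no common low-degree neighbour, so that each surviving vertex loses at most one unit of degree and $\delta(G-u-v)\ge \tfrac{n-2}{2}+d$; induction then makes $G-u-v$ rigid, and two applications of (T1) restore first $u$ and then $v$ (non-adjacency guarantees each has its full $\ge d$ neighbours present at the moment it is re-added). Proving that such a pair always exists in the exceptional case requires a structural analysis of $D$, and for the most symmetric configurations one may instead have to invoke the companion degree-sum bound on $\eta(G)$ or a tailored gluing argument via (T2).

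I expect the remaining points—the exact rounding in the thresholds, the check that $G-v$ still has at least $d+1$ vertices, and the bookkeeping in the two-vertex step—to be routine and would defer them. Finally, it is worth recording why no easier route succeeds, since this justifies the induction on $n$. The implication ``$d(d+1)$-connected $\Rightarrow d$-rigid'' gives only $f(n,d)\le\lceil(n+d(d+1)-2)/2\rceil$, which is quadratic in $d$; and splitting $V$ into two overlapping halves and recursing through (T2) fails, because an induced subgraph on a constant fraction of the vertices retains minimum degree only about $(K+\tfrac12)d$, far below the $\approx \tfrac12|A|$ needed to reapply the hypothesis. It is precisely because deleting one vertex preserves the near-$\tfrac n2$ ratio between minimum degree and order that induction on $n$ is the natural, and I believe essentially forced, line of attack.
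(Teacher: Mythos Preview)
Your inductive strategy on $n$ has a genuine gap at its hardest case. Take $n$ even and $G$ exactly $(\tfrac{n}{2}+d)$-regular, so that $D=V$. Then any non-adjacent pair $u,v$ has $|N(u)\cap N(v)|\ge \deg(u)+\deg(v)-(n-2)=2d+2$ common neighbours, every one of which lies in $D$; hence your two-vertex deletion can never find a pair ``with no common low-degree neighbour''. More generally, deleting any set $S$ with $|S|=k\ge 1$ and applying the hypothesis to $G-S$ would require each surviving vertex to lose at most $k/2$ units of degree, which a regular graph with such dense common neighbourhoods cannot accommodate. Your suggested fallbacks do not rescue the argument: the degree-sum bound $g(n,d)\le n+3d-3$ is in this paper \emph{derived from} Theorem~\ref{thm:delta:n/2+d} (see the proof of Theorem~\ref{thm:intro:g:5d}), so invoking it here would be circular; and you yourself explain why gluing via (T2) on halves fails. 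The regular case is not a boundary nuisance but the heart of the problem, and (T1)--(T3) alone appear insufficient for it.

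For comparison, the paper's proof is structurally quite different: it inducts on $d$, not $n$. Given a minimum-degree vertex $u$ and a well-chosen $v$, it contracts $\{u,v\}$ into a vertex $w$ and observes that $(G/\{u,v\})-w$ satisfies the $(d{-}1)$-dimensional hypothesis; induction together with Whiteley's coning theorem then makes the cone of this graph $d$-rigid. Passing back to $G$ uses Whiteley's vertex-splitting theorem, but the quantitative control that closes the remaining slack relies on a new matroid parameter $\rup_d(G)=r_d(G^w)-r_d(G)$ and a careful analysis of $\mathcal{R}_d$-bridges (Lemmas~\ref{lem:dofup:s1s2s3} and~\ref{lem:dof:easybound:v2}). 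This machinery is precisely what carries the regular and near-regular cases; it is not captured by $0$-extension and gluing alone.
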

	
	Subsequently, in \cite{KLM}, they made further progress towards Conjecture \ref{conj:KLM1} by showing that $f(n,d)\leq \frac{n}{2}+d-1$ already holds for $ n\geq \Omega(d)\cdot\log^2 n$ (\cite[Theorem 1.2]{KLM}).
	Our first result states that here the assumption on $n$ can be dropped, which confirms Conjecture \ref{conj:KLM1} with $K=1$:
	
	\begin{theorem}\label{thm:delta:n/2+d}
		$f(n,d)\leq \frac{n}{2}+d-1$ for all $n,d\in \mathbb{N}$.
	\end{theorem}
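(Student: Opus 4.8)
The plan is to reduce the statement to producing a single large $d$-rigid subgraph and then to grow it greedily. Throughout I use two standard facts about the $d$-dimensional rigidity matroid: the \emph{gluing lemma}, that the union of two $d$-rigid graphs sharing at least $d$ vertices is again $d$-rigid, and its special case the \emph{$0$-extension}, that adding a new vertex joined to at least $d$ existing vertices of a $d$-rigid graph yields a $d$-rigid graph. From the $0$-extension I isolate the following growth principle: if $H$ is a $d$-rigid subgraph of $G$ on a vertex set $S$ and some vertex $v\notin S$ has at least $d$ neighbours in $S$, then $H$ together with $v$ and $d$ of those edges is $d$-rigid on $S\cup\{v\}$.

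Next I would quantify when this growth can stall. Suppose the process is stuck, i.e.\ every $v\notin S$ has at most $d-1$ neighbours in $S$. Picking any such $v$ and using $\deg_G(v)\ge \tfrac{n}{2}+d-1$, it has at least $\tfrac{n}{2}$ neighbours inside $V\setminus S$, whence $|V\setminus S|\ge \tfrac{n}{2}+1$ and so $|S|\le \tfrac{n}{2}-1$. Consequently, \emph{any} $d$-rigid subgraph on at least $\lceil n/2\rceil$ vertices can be extended one vertex at a time all the way to a spanning $d$-rigid subgraph, which already proves the theorem once such a large rigid subgraph is in hand. The whole problem is thus reduced to producing a $d$-rigid subgraph on at least about $n/2$ vertices.

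I would establish this by strong induction on $n$ (with $d$ fixed). Let $H$ be a $d$-rigid subgraph of $G$ whose vertex set $S$ has maximum size $m$. By the previous paragraph either $m\ge n/2$, in which case $H$ extends to a spanning rigid subgraph and we are done, or else (by maximality, via the $0$-extension) every vertex outside $S$ has at most $d-1$ neighbours in $S$. In the latter case $G'=G[V\setminus S]$ has $n-m>n/2$ vertices and $\delta(G')\ge \delta(G)-(d-1)\ge \tfrac{n}{2}$. Comparing this with the threshold $\tfrac{n-m}{2}+d-1$ needed to apply the induction hypothesis to $G'$, the inequality $\tfrac{n}{2}\ge \tfrac{n-m}{2}+d-1$ holds precisely when $m\ge 2d-2$. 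Hence whenever the maximum rigid subgraph already has at least $2d-2$ vertices, the induction hypothesis makes $G'$ itself $d$-rigid — a rigid subgraph on more than $n/2$ vertices — and the growth principle finishes the proof.

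The remaining, and principal, difficulty is the low end: ruling out that every $d$-rigid subgraph of $G$ has fewer than $2d-2$ vertices. Here the plan is to build a rigid ``seed'' of size $\ge 2d-2$ directly from the local density forced by the degree hypothesis: any two adjacent vertices $u,v$ have at least $2d-2$ common neighbours $W$, and I would use the coning relation (a graph is $k$-rigid iff its cone is $(k+1)$-rigid) to realise the double cone of $u,v$ over a $(d-2)$-rigid subgraph $W'\subseteq G[W]$ as a $d$-rigid subgraph of $G$ of size $2+|W'|\ge 2d-2$, recursing on the dimension (with bases $d=1$, where any edge is a seed, and $d=2$, where $\delta(G)>n/2$ forces a triangle). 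Controlling this recursion is the delicate point, since the internal density of a common neighbourhood $W$ is not automatically large; the main work is to choose the pairs so that a seed of the required size is always produced. Once this seed lemma is available, the induction of the previous paragraph closes and the theorem follows.
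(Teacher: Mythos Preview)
Your reduction is clean and correct as far as it goes: the growth principle, the stalling bound $|S|\le n/2-1$, and the induction on $n$ (applying the hypothesis to $G'=G[V\setminus S]$ once $m\ge 2d-2$) all work exactly as you describe. But the whole weight of the argument now rests on the seed lemma, and your sketch for it does not succeed. The difficulty you flag is fatal, not merely delicate: for a pair $u,v$ with common neighbourhood $W$, the degree hypothesis gives $|W|\ge 2d-2$ but says essentially nothing about $G[W]$. Indeed, for $w\in W$ one only gets $\deg_{G[W]}(w)\ge \delta(G)+|W|-n$, which is negative once $n>6d-6$; so $G[W]$ can be edgeless, and there is no $(d-2)$-rigid subgraph of any positive size to double-cone over. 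The recursion on dimension therefore has no foothold: at the very first step the ambient graph loses the minimum-degree hypothesis entirely, and there is no mechanism in your plan for recovering it or substituting something else. Since you need a seed of size $2d-2$ (not merely $d+1$), even finding a large clique would require roughly $K_{2d-2}\subseteq G$, which the degree condition does not force. As written, the proposal proves the theorem only modulo a lemma that is, plausibly, as hard as the theorem itself.

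The paper's proof avoids the seed problem altogether by inducting on $d$ rather than on $n$. In a minimal counterexample (which is $\mathcal{R}_d$-closed) one locates a vertex $u$ of minimum degree and a vertex $v$ with many neighbours outside $N(u)$, contracts the pair $\{u,v\}$ to a vertex $w$ (so that $G$ is recovered by a vertex or spider splitting, whence $G/\{u,v\}$ is not $d$-rigid), and observes that $H=(G/\{u,v\})-w$ satisfies $\delta(H)\ge |V(H)|/2+(d-1)-1$; by induction on $d$, $H$ is $(d-1)$-rigid, and then coning and a new matroidal parameter $r^+_d(G)=r_d(G^w)-r_d(G)$ are used to derive a contradiction. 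The point is that induction on $d$ via contraction preserves a usable minimum-degree condition on a graph of essentially the same order, whereas your induction on $n$ passes to an induced subgraph whose degree condition is only salvageable after you already have a large rigid piece.
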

	
	Their other main result established that if $n\geq d(2d+1)+2$, then $f(n,d)=\lceil\frac{n+d-2}{2}\rceil$; i.e., the bound in \eqref{bound:d-2:2} is tight (\cite[Theorem 1.1]{KLM}). We improve on this by showing that the assumption on $n$ can be relaxed to $n\geq 29d$.
	
	\begin{theorem}\label{thm:delta:(n+d)/2-1}
		If $n\geq 29d$, then $f(n,d)=\lceil\frac{n+d-2}{2}\rceil.$
	\end{theorem}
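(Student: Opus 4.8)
The lower bound $f(n,d)\ge \lceil (n+d-2)/2\rceil$ is precisely \eqref{bound:d-2:2}, so the task is the matching upper bound: every $n$-vertex graph $G$ with $\delta(G)\ge \lceil (n+d-2)/2\rceil$ and $n\ge 29d$ is $d$-rigid. I will use two standard facts: $(\ast)$ attaching a new vertex with at least $d$ neighbours to a $d$-rigid graph yields a $d$-rigid graph ($0$-extension); and $(\ast\ast)$ the union of two $d$-rigid graphs sharing at least $d$ vertices is $d$-rigid (gluing). The first reduction is that it suffices to exhibit a set $W$ with $G[W]$ $d$-rigid and $|W|\ge (n+d)/2$: any $x\notin W$ has at most $n-|W|-1$ neighbours outside $W$, hence at least $\delta(G)-(n-|W|-1)\ge |W|-\tfrac{n-d}{2}\ge d$ neighbours in $W$, so one may absorb $V\setminus W$ vertex by vertex via $(\ast)$.

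To build such a $W$, let $W$ be a vertex set of \emph{maximum} size with $G[W]$ $d$-rigid, and suppose for contradiction $W\ne V$; put $s=|W|$ and $U=V\setminus W$. By maximality every $u\in U$ has at most $d-1$ neighbours in $W$ (else $(\ast)$ enlarges $W$), so $\delta(G[U])\ge \tfrac{n-d}{2}$. Applying Theorem \ref{thm:delta:n/2+d} to $G[U]$ in dimension $d$ shows $G[U]$ is itself $d$-rigid once $s\ge 3d-2$ (the inequality $\tfrac{n-d}{2}\ge\tfrac{n-s}{2}+d-1$ rearranges to this), and a short separate argument disposes of $s<3d-2$. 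Since $G[U]$ is then a $d$-rigid induced subgraph, maximality of $W$ forces $s\ge |U|=n-s$; together with $s\le\tfrac{n+d-2}{2}$ (each $u\in U$ has $\ge\tfrac{n-d}{2}$ neighbours \emph{inside} $U$) this pins $s$ to the window $\lceil n/2\rceil\le s\le\tfrac{n+d-2}{2}$. Thus $G[W]$ and $G[U]$ are two disjoint $d$-rigid subgraphs, each on $\tfrac n2\pm O(d)$ vertices, and it remains to glue them into a spanning $d$-rigid subgraph, contradicting $W\ne V$.

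This balanced two-body gluing is the heart of the matter and the step I expect to be the main obstacle. A useful first observation is that cross edges are plentiful: each $u\in U$ sends at least $s-\tfrac{n-d}{2}\ge \tfrac d2$ edges into $W$, so there are at least $\tfrac{(n-d+2)d}{4}$ cross edges, which for $n\ge 29d$ exceeds $\binom{d+1}{2}=\tfrac{d(d+1)}{2}$; hence the potential difficulty is not scarcity but the \emph{spread/independence} of these edges. My plan is to absorb at least $d$ vertices of $W$ into the rigid body $G[U]$: if $w_1,\dots,w_d\in W$ can be ordered so that each $w_i$ has at least $d$ neighbours in $U\cup\{w_1,\dots,w_{i-1}\}$, then $G[U\cup\{w_1,\dots,w_d\}]$ is $d$-rigid by repeated $(\ast)$ and shares the $d$ vertices $w_1,\dots,w_d$ with $G[W]$, so $(\ast\ast)$ finishes the proof.

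Such an elimination order can fail only if there is a \emph{blocking set} $S\subseteq W$ in which every vertex has fewer than $d$ neighbours outside $S$. A direct count then gives each $w\in S$ at least $\tfrac{n-d}{2}$ neighbours inside $S$, so $\delta(G[S])\ge \tfrac{n-d}{2}$ and $|S|\ge \tfrac{n-d}{2}+1$; feeding this into Theorem \ref{thm:delta:n/2+d} (the inequality $\tfrac{n-d}{2}\ge\tfrac{|S|}{2}+d-1$ reduces, via $|S|\le s\le\tfrac{n+d-2}{2}$, to $n\ge 7d-6$) makes $G[S]$ itself $d$-rigid, while $|W\setminus S|\le d-2$. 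This reinstates a balanced pair of large disjoint rigid bodies joined by a cut of fewer than $d$ vertices, which collides with the $d$-connectivity guaranteed by the degree hypothesis; I expect to close the case by quantifying this tension, i.e.\ by showing the blocking set cannot both be rigid and be separated from $U$ by so small a cut. It is in chaining these degree and connectivity inequalities that the hypothesis $n\ge 29d$ is consumed, and I anticipate the delicate part to be the bookkeeping of these counts rather than any single conceptual leap, with the constant $29$ emerging as the least value making every inequality in the chain hold at once.
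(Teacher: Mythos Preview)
Your reduction to the ``balanced two-body'' configuration is correct and useful, but the proposed endgame has a genuine gap that is not just bookkeeping. Consider the model case: $n=2m$, and $G$ is two disjoint cliques $A,B$ on $m$ vertices each, with every vertex given exactly $\lceil d/2\rceil$ neighbours in the opposite clique. This graph satisfies $\delta(G)\ge (n+d-2)/2$, is $d$-connected, and (by the very theorem you are trying to prove) is $d$-rigid; yet \emph{no} vertex can be absorbed into the other side by a $0$-extension, since every vertex has fewer than $d$ cross-neighbours. In your language, the blocking set is $S=A$ (or $S=B$), $W\setminus S=\emptyset$, and $S$ is certainly not ``separated from $U$ by a cut of fewer than $d$ vertices'': there are $\Theta(nd)$ cross-edges going directly from $S$ to $U$. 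So the tension you invoke between rigidity of $S$ and $d$-connectivity simply does not exist here. The rigidity of such a graph comes from the \emph{independence} of $\binom{d+1}{2}$ cross-edges in $\mathcal R_d$ over the two cliques, a matroidal phenomenon that $0$-extension, $1$-extension, and the gluing lemma $(\ast\ast)$ cannot detect.

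The paper's proof is accordingly of a very different character. It does not try to glue two rigid pieces at all; instead it proceeds by induction on $d$, introduces the parameter $r^+_d(G)=r_d(G^w)-r_d(G)$, shows via delicate matroid arguments that a minimal counterexample must have bounded maximum degree (Claim \ref{claim:DeltaG}), then shifts dimension to $d'=d+\lceil 9d/20\rceil$ and uses an \emph{expected rank contribution} argument (averaging over random vertex orderings, Lemmas \ref{lem:rc:tbound} and \ref{lem:matroid:randomset}) to force $r_{d'}(G)\ge d'n-\binom{d'+1}{2}$. The constant $29$ arises from the interplay of this dimension shift with the bound in Lemma \ref{lem:dof:easybound:v2}, not from any chain of graph-theoretic degree counts. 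Your intuition that the difficulty is ``spread/independence of cross-edges'' is exactly right, but resolving it requires these rigidity-matroid tools rather than purely combinatorial absorption.
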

	
	For $d< n< 29d$, the exact value of $f(n,d)$ remains an open problem.
	As it was noted in \cite{KLM}, the lower bound in \eqref{bound:d-2:2} is not always tight. A different lower bound arises from the fact that every $d$-rigid graph on more than $d$ vertices has at least $dn-d(d+1)/2$ edges. This yields 
	\begin{equation}\label{bound:bonyi}
		2d-\frac{d(d+1)-1}{n}\leq f(n,d),
	\end{equation} which is more restrictive than $\eqref{bound:d-2:2}$ in the regime $d< n < 2d+2$. 
	Krivelevich, Lew and Michaeli conjectured that the combination of \eqref{bound:d-2:2} and \eqref{bound:bonyi}  is essentially tight:
	\begin{conj} \cite[Conjecture 1]{KLM}
		\label{KLMconj}
		Let $1\leq d<n$. Then
		$$ f(n,d)\leq \max \left\{ \left\lceil\frac{n+d-2}{2}\right\rceil,\left\lceil 2d-\frac{d(d+1)}{n}\right\rceil \right\}.$$
	\end{conj}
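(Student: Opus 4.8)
The plan is to verify that the right-hand side is an upper bound for $f(n,d)$, matching the two lower bounds \eqref{bound:d-2:2} and \eqref{bound:bonyi} that already account for the $\max$. Concretely, I would fix a graph $G$ on $n$ vertices with
$$\delta(G)\ \geq\ \max\left\{\left\lceil\tfrac{n+d-2}{2}\right\rceil,\ \left\lceil 2d-\tfrac{d(d+1)}{n}\right\rceil\right\}$$
and prove that $G$ is $d$-rigid. The two terms cross over near $n=2d+2$: for $n\geq 2d+2$ the connectivity term dominates, while for $d<n<2d+2$ the edge-count term dominates. I would handle the two regimes separately.

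In the regime $n\geq 2d+2$ I would reuse the engine behind Theorems \ref{thm:delta:n/2+d} and \ref{thm:delta:(n+d)/2-1}: first produce a $d$-rigid subgraph $R$ with $|V(R)|\geq \tfrac{n+d}{2}$, then attach the remaining vertices one at a time. The attachment is forced by the degree hypothesis: if $|V(R)|\geq \tfrac{n+d}{2}$, then any $v\notin V(R)$ has at least $\delta(G)-(n-1-|V(R)|)\geq d$ neighbours in $R$, and adding a vertex of degree $\geq d$ to a $d$-rigid graph on at least $d+1$ vertices preserves $d$-rigidity. The real content is building the large rigid core, where I would import and, if necessary, sharpen the core-finding argument of \cite{KLM}. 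The obstacle here is quantitative: Theorem \ref{thm:delta:(n+d)/2-1} yields the exact threshold only for $n\geq 29d$, so the subrange $2d+2\leq n<29d$ is not covered, and closing it demands a core construction valid for moderate $n$.

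For $d<n<2d+2$ the bootstrap collapses, since $\tfrac{n+d}{2}<d+1$ so no rigid core can reach the required size, and the graph must be treated globally. Here the hypothesis forces $|E(G)|\geq dn-\binom{d+1}{2}$, exactly the minimally $d$-rigid edge count, and the complement $\overline G$ is sparse, with maximum degree at most about $d/2$; thus $G=K_n\setminus H$ with $H$ sparse. Since $K_n$ exceeds the minimally rigid edge count by $\binom{n-d}{2}$, a quantity comparable to the number of edges of $H$, the task is to show that deleting such a controlled-sparsity $H$ from $K_n$ keeps the rigidity-matroid rank at the full value $dn-\binom{d+1}{2}$. I would attempt this by building a spanning $d$-rigid subgraph directly through a Henneberg-type construction from a clique, using the sparsity of $\overline G$ to ensure the required $0$- and $1$-extensions are always available; a pure $0$-extension ordering fails from a small clique, so the edge-splitting ($1$-extension) moves are essential. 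The delicate point is matching the threshold exactly, i.e.\ certifying that even the worst admissible $H$ leaves the rank full.

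The cleanest unconditional outcome is for $d=2,3$: rather than argue uniformly, I would determine $f(n,2)$ and $f(n,3)$ exactly for every $n$ and then check that the values equal the conjectured maximum. For $d=2$ this is carried by Laman's count, and for $d=3$ by the available sufficient conditions for $3$-rigidity, both tight enough on the extremal graphs. I expect the genuine obstacle to lie in the general case and to be twofold: for $d\geq 4$ there is no combinatorial (Laman-type) characterisation of $d$-rigidity, so every rigidity claim must be routed through explicit extensions or gluing, which makes the small-$n$ regime hard to pin down tightly; and even in the connectivity regime one needs a ``min-degree $\Rightarrow$ large rigid core'' statement valid throughout $2d+2\leq n<29d$, which the present core-finding techniques do not supply. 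These are exactly the gaps that keep the statement a conjecture for general $d$.
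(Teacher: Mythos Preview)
This statement is a conjecture that the paper does \emph{not} prove in general; it remains open for $d\geq 4$, and you correctly recognise this in your final paragraph. The paper settles only the cases $d\in\{2,3\}$ (Corollaries \ref{conjd2} and \ref{conjd3}), so the relevant comparison is with your sketch for those dimensions.

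There your plan diverges substantially from what the paper does. You propose to compute $f(n,2)$ via Laman and $f(n,3)$ via ``available sufficient conditions for $3$-rigidity''. The paper instead proves a new and stronger degree-sum theorem (Theorem \ref{rigid graph in R^3}): every graph with $\eta(G)\geq n+1$ is $3$-rigid unless it is one of four explicit small graphs $W_5,B_6,C_7^1,C_7^2$. The exact value of $f(n,3)$ then falls out immediately, and the $d=2$ case is derived from the $d=3$ case by coning (Theorem \ref{thm:coning}), not from Laman. The key advantage of working with $\eta$ rather than $\delta$ is that the degree-sum hypothesis is inherited by the complement of a maximum rigid subgraph (if $G'$ is a maximum rigid clique in $G$ and $H=G-V(G')$, then $H$ again satisfies the degree-sum bound), which drives a clean induction on a minimum counterexample. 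Your core-and-bootstrap scheme does not have this inheritance property, which is why you are forced into ad hoc analyses for small $n$.

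Your general-$d$ outline is a fair summary of the obstacles, but note that the ``rigid core of size $\geq\frac{n+d}{2}$ then $0$-extend'' mechanism is not how the paper proves any of its results: even Theorems \ref{thm:delta:n/2+d} and \ref{thm:delta:(n+d)/2-1} go through the parameter $r_d^+$ and rank-contribution estimates rather than a core-finding argument in the sense you describe.
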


	In the second part of this paper, we verify Conjecture \ref{KLMconj} in the special cases $d\in \{2,3\}$. 
	We will, however, consider the problem in a more general form,
	which leads to the following definition and question.
	For a graph $G=(V,E)$, define $$\eta(G)=\min_{uv\notin E}( \deg_G(u)+\deg_G(v)).$$
	
	\begin{question} 
		For $n,d\in \mathbb{N}$, what is the smallest integer $g(n,d)$ such that every $n$-vertex graph $G$ with $\eta(G)\geq g(n,d)$ is $d$-rigid?
	\end{question}
	Since $\eta(G)\geq 2\delta(G)$ for every graph $G$, it follows that  
	$	 f(n,d)\leq \left\lceil g(n,d)/2\right\rceil.$
	We believe that $f(n,d)= \left\lceil g(n,d)/2\right\rceil$ may in fact be true for every $n,d\in \mathbb{N}$.
	We propose this in the form of Conjecture \ref{gnd:conj2} below for the case $n>2d+2$.
	Note that the lower bound from \eqref{bound:d-2:2} corresponds to $g(n,d)\geq n+d-2$.
	In the regime $n> 2d+2$, this dominates the bound one can get from an edge-counting argument (see Lemma \ref{ecount}, for details);
	therefore, we conjecture: 
	\begin{conj}\label{gnd:conj2} Let $n,d \in \mathbb{N}$. %
		If $n> 2d+2$, then 
		$g(n,d) = n+d-2$.
	\end{conj}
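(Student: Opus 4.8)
The plan is to establish the matching lower and upper bounds separately. For the lower bound $g(n,d)\ge n+d-2$ it suffices to produce a single non-$d$-rigid graph with $\eta=n+d-3$. I would take two cliques $A,B$ sharing a common $(d-1)$-element set $S=A\cap B$, with $|A|=a$, $|B|=b$ chosen so that $a+b=n+d-1$ and $a,b\ge d$, and let $G^\ast=A\cup B$. Every non-edge of $G^\ast$ joins $A\setminus S$ to $B\setminus S$, so $\eta(G^\ast)=(a-1)+(b-1)=n+d-3$, while $S$ is a cut of size $d-1$; hence $G^\ast$ is not $d$-connected and therefore not $d$-rigid. This matches the value noted after \eqref{bound:d-2:2} and forces $g(n,d)\ge n+d-2$.

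For the upper bound I assume $\eta(G)\ge n+d-2$ with $n>2d+2$ and try to exhibit a spanning $d$-rigid subgraph. The first structural observation is that the set $L=\{v:\deg_G(v)<(n+d-2)/2\}$ of low-degree vertices is a clique, since two non-adjacent vertices of $L$ would have degree sum $<n+d-2$, contradicting the hypothesis; writing $H=V\setminus L$, every vertex of $H$ has degree at least $(n+d-2)/2$. I would then grow rigidity incrementally from two standard facts: complete graphs $K_m$ with $m\ge d+1$ are $d$-rigid, and attaching a new vertex with at least $d$ neighbours to a $d$-rigid graph preserves $d$-rigidity (together with the gluing lemma, that two $d$-rigid subgraphs sharing at least $d$ vertices have $d$-rigid union). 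The key is to place all of $L$ inside the starting rigid core — the clique $G[L]$ when $|L|\ge d+1$, and otherwise $L$ together with enough high-degree vertices to reach a rigid $(d+1)$-set — so that the vertices we still have to absorb are exactly the high-degree vertices of $H$.

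The engine of the argument is then a greedy absorption: starting from the core $R\supseteq L$, repeatedly attach a vertex of $H\setminus R$ that has at least $d$ neighbours in $R$. I would try to show this never halts before $R=V$. If it did halt, every remaining $s$ would have at most $d-1$ neighbours in $R$ and hence, using $\deg_G(s)\ge(n+d-2)/2$, at least $(n-d)/2$ neighbours among the remaining vertices, so the leftover set would be forced to be very dense; the hope is to rule this out by a counting/discharging argument driven by the degree-sum bound, possibly bootstrapping rigidity of the dense leftover and re-invoking attachment or gluing. Making this step genuinely watertight — rather than merely showing the leftover is dense — is the delicate part of the construction.

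I expect the main obstacle to be precisely the regime $2d+2<n<d(d+2)$. There $H$ can be too small for the tight minimum-degree bound of Theorem~\ref{thm:delta:(n+d)/2-1} (which needs $n\ge 29d$) to be used as a black box, and the weaker universal bound of Theorem~\ref{thm:delta:n/2+d} falls short by about $d/2$; meanwhile $L$ can be large enough that passing to $G[H]$ is fatal, since a direct computation shows $\deg_{G[H]}(h)$ can sit below the rigidity threshold for $G[H]$ by roughly $|L|/2$. Closing this gap seems to require a dedicated rigidity lemma adapted to the mixed low/high-degree structure instead of an appeal to the minimum-degree theorems, which is presumably why the unconditional statement is left as a conjecture and only the cases $n\ge d(d+2)$ and $d\in\{2,3\}$ are resolved.
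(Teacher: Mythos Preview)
The statement you are attempting is a \emph{conjecture} in the paper, not a theorem; the paper does not prove it in full generality, only the special cases $n\ge d(d+2)$ (Theorem~\ref{thm:intro:g:d-1}) and $d\in\{2,3\}$ (Section~\ref{sec:lowdim}). You clearly recognise this at the end of your write-up, so what you have submitted is really a discussion of a plausible strategy together with an honest assessment of where it breaks, rather than a proof.

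Your lower bound is correct and is exactly the construction the paper uses (see the optimality clause of Lemma~\ref{4conn}): two cliques glued along $d-1$ vertices give a graph with $\eta=n+d-3$ that fails to be $d$-connected, hence fails to be $d$-rigid.

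Your upper-bound strategy --- split into a low-degree clique $L$ and a high-degree set $H$, build a rigid core containing $L$, then greedily absorb high-degree vertices by $0$-extensions --- is close in spirit to the paper's proof of the \emph{weaker} bound $g(n,d)\le n+3d-3$ (Theorem~\ref{thm:intro:g:5d}), which also exploits the fact that the low-degree vertices form a clique and then applies Theorem~\ref{thm:delta:n/2+d} to the remaining high-degree part. However, for the tight bound $n+d-2$ in the range $n\ge d(d+2)$, the paper takes a completely different route: it bypasses any structural decomposition and instead bounds $r_d(G)$ directly via expected rank contributions (Lemmas~\ref{lemma:rc_sum}, \ref{lem:rc:geq:d}) together with the observation that $G+K(N_G(v))$ is rigid for every $v$ (Lemma~\ref{lem:simpl:vert}). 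Your greedy-absorption idea does not appear there, and your diagnosis of why it stalls --- the leftover set is dense but not provably rigid without a dedicated lemma --- is accurate. That missing lemma is precisely the content of the open conjecture.
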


	In support of Conjecture \ref{gnd:conj2}, we prove the following theorems.
	
	\begin{theorem}\label{thm:intro:g:5d}
		$g(n,d)\leq n+3d-3$ for all $n,d\in \mathbb{N}$.
	\end{theorem}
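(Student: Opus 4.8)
The plan is to reduce everything to the minimum-degree result, Theorem~\ref{thm:delta:n/2+d}, by splitting $V(G)$ according to degree. First I would dispose of the trivial case: a complete graph is $d$-rigid, so assume $G$ has a non-edge. Taking a vertex $u$ of minimum degree and a non-neighbour $v$, the hypothesis $\deg(u)+\deg(v)\ge n+3d-3$ together with $\deg(v)\le n-1$ yields the basic bound $\delta(G)\ge 3d-2$, which I will use repeatedly to re-attach low-degree vertices at the very end.

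Next, set the threshold $\tau=(n+3d-3)/2$ and let $S=\{v:\deg_G(v)<\tau\}$ and $T=V\setminus S$. Any two vertices of $S$ have degree sum $<2\tau=n+3d-3\le\eta(G)$, so they cannot be a non-edge; hence $S$ is a clique, while every vertex of $T$ has degree $\ge\tau$. The argument then divides on $|S|$. If $|S|\le d-1$, then each $w\in T$ satisfies $\deg_{G[T]}(w)\ge\tau-|S|\ge |T|/2+d-1$ (this inequality is exactly equivalent to $|S|\le d-1$, which is why $\tau$ is chosen so), so $G[T]$ is $d$-rigid by Theorem~\ref{thm:delta:n/2+d}. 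I would then reinsert the vertices of $S$ one by one: since $S$ is a clique with $|S|-1\le d-2$ vertices, every $s\in S$ has at least $\delta(G)-(d-2)\ge 2d\ge d$ neighbours in $T$, so repeated vertex addition (the $0$-extension, which preserves $d$-rigidity) finishes this case.

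The substantial case is $|S|\ge d$. Here I would start from the $d$-rigid clique $S$ and greedily perform vertex additions, obtaining a maximal $d$-rigid set $C\supseteq S$ reachable this way, with $T'=V\setminus C\subseteq T$. If $T'=\emptyset$ we are done, so suppose not. By maximality every $v\in T'$ has at most $d-1$ neighbours in $C$, whence $\delta(G[T'])\ge\tau-(d-1)=(n+d-1)/2$; since $|T'|\le|T|=n-|S|\le n-d$, this is at least $|T'|/2+d-1$, so $G[T']$ is $d$-rigid by Theorem~\ref{thm:delta:n/2+d}. The difficulty is now that $C$ and $T'$ are vertex-disjoint rigid pieces, and a priori the cut between them could be too sparse to certify rigidity of the union.

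The crux, and the step I expect to be the main obstacle, is to show that the degree-sum hypothesis forces this cut to be so rich that $C$ can simply be re-attached to $T'$ by vertex additions, avoiding any black-box gluing lemma for disjoint rigid pieces. Fix $v^*\in T'$ and write $b=|N(v^*)\cap C|\le d-1$. For every $u\in C$ non-adjacent to $v^*$ one has $\deg(u)\ge n+3d-3-\deg(v^*)\ge |C|+2d-1$ (using $\deg(v^*)\le|T'|+d-2$), and therefore $|N(u)\cap T'|\ge\deg(u)-(|C|-1)\ge 2d\ge d$. Thus all but the at most $d-1$ vertices of $E:=N(v^*)\cap C$ have at least $d$ neighbours in $T'$; I would add these to the rigid graph $G[T']$ by vertex additions, and finally add the $\le d-1$ vertices of $E$, each of which has at least $\delta(G)-(d-2)\ge 2d\ge d$ neighbours outside $E$, hence in the current rigid set. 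This proves $G$ is $d$-rigid. The only rigidity facts needed beyond Theorem~\ref{thm:delta:n/2+d} are that complete graphs are $d$-rigid and that attaching a vertex of degree $\ge d$ preserves $d$-rigidity; the genuine content is the observation that a sparse side of the cut ($b\le d-1$) is precisely what forces the opposite side to be densely joined.
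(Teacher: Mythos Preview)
Your proof is correct. The high-level strategy coincides with the paper's---split the vertex set by the threshold $\tau=(n+3d-3)/2$, observe that the low-degree part is a clique, and apply Theorem~\ref{thm:delta:n/2+d} to the complementary piece---but the executions diverge. The paper first passes to the $\mathcal{R}_d$-closure of $G$ and then takes $K$ to be a \emph{maximal clique} containing the low-degree vertices; in the closed graph every $v\notin K$ has at most $d-1$ neighbours in $K$, so $G[V\setminus K]$ satisfies the minimum-degree hypothesis, is $d$-rigid, and hence (by closure again) is itself a clique. One then has $V$ partitioned into two cliques, and a single non-edge between them yields enough common neighbours to contradict maximality of $K$. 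This avoids your case split on $|S|$ and the explicit re-attachment step entirely: the contradiction comes in three lines.

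Your route instead stays in $G$ itself and is fully constructive: you grow a rigid set by $0$-extensions from the clique side, show the leftover $T'$ is $d$-rigid via Theorem~\ref{thm:delta:n/2+d}, and then re-absorb $C$ into $T'$ vertex by vertex, the key point being that any $u\in C\setminus N(v^*)$ must have at least $2d$ neighbours in $T'$. The accounting is slightly longer, but you use nothing beyond $0$-extension and Theorem~\ref{thm:delta:n/2+d}, whereas the paper's argument leans on properties of the closure. Both approaches exploit the same phenomenon---that a vertex with few neighbours on one side forces the opposite side to be densely connected across the cut---but the paper packages it as a one-line contradiction, while you unpack it into an explicit gluing sequence.
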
 
	
	\begin{theorem}\label{thm:intro:g:d-1}
		If $n\geq d(d+2)$, then $g(n,d)= n+d-2$.
	\end{theorem}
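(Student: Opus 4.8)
The plan is to prove the lower bound $g(n,d)\ge n+d-2$ by exhibiting a non-rigid graph, and then to prove the matching upper bound $g(n,d)\le n+d-2$ under the hypothesis $n\ge d(d+2)$. For the lower bound I would take $G_0=K_{d-1}\vee(K_a\cup K_b)$ with $a,b\ge 1$ and $a+b=n-d+1$, the join of a $(d-1)$-clique with two disjoint cliques. Its $(d-1)$-clique is a separator, so $G_0$ is not $d$-connected and hence not $d$-rigid; its only non-edges run between $K_a$ and $K_b$, and each such pair has degree sum $(a+d-2)+(b+d-2)=n+d-3$. Thus $\eta(G_0)=n+d-3$, which gives $g(n,d)\ge n+d-2$ (this is the bound already recorded after \eqref{bound:d-2:2}). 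From now on I assume $\eta(G)\ge n+d-2$ and $n\ge d(d+2)$, and must prove $G$ is $d$-rigid.

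First I would record two easy structural facts. A standard cut argument shows $G$ is $d$-connected: a separator $S$ with $|S|\le d-1$ would leave a non-adjacent pair $u,v$ with $\deg(u)+\deg(v)\le(n-|S|-2)+2|S|=n+|S|-2\le n+d-3$, contradicting the hypothesis. The same counting shows that any two non-adjacent vertices have at least $d$ common neighbours, and that the low-degree set $L=\{v:\deg(v)<(n+d-2)/2\}$ is a clique, since two non-adjacent vertices of $L$ would violate the degree-sum bound.

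The heart of the argument is to show that $G$ contains a spanning $d$-rigid subgraph. I would fix a vertex-maximum $d$-rigid subgraph $R\subseteq G$ and aim to prove $V(R)=V$. The two tools are the gluing lemma (two $d$-rigid subgraphs sharing at least $d$ vertices have a $d$-rigid union) and its special case, the $0$-extension (if $w$ has at least $d$ neighbours in $V(R)$ then $R+w$ is $d$-rigid). Maximality of $R$ therefore forces every $w\in W:=V\setminus V(R)$ to have at most $d-1$ neighbours in $V(R)$. Counting the non-edges between $W$ and $R$ in two ways — from the $W$-side each vertex sends at most $d-1$ edges into $R$, while from the $R$-side any $r$ non-adjacent to some $w\in W$ satisfies $\deg(r)\ge|V(R)|$ and hence has at most $|W|-1$ non-neighbours — yields $|V(R)|\le(d-1)|W|$. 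Combined with $|V(R)|+|W|=n$ this gives $|W|\ge n/d\ge d+2$, so $W$ is large and, by $d$-connectivity, richly joined to $R$.

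The main obstacle is to convert this into a contradiction, i.e.\ to show that such a large, highly connected leftover $W$ cannot survive outside a maximum rigid subgraph. My plan is to analyse $G[W]$, whose restricted condition $\eta(G[W])\ge n-d$ follows from the $\le d-1$ bound, and to merge a rigid subgraph on $W$ with $R$ using the $\ge d$ common-neighbour property together with the $W$--$R$ edges; the hypothesis $n\ge d(d+2)$ is precisely what makes these counts close. I expect this merging step to be the delicate part, since gluing two essentially disjoint rigid subgraphs requires more than the ``$\ge d$ shared vertices'' lemma: it will likely require re-selecting $R$, or invoking the minimum-degree results (Theorems \ref{thm:delta:n/2+d} and \ref{thm:delta:(n+d)/2-1}) as black boxes on sufficiently dense pieces, or a Henneberg-type extension that absorbs several vertices of $W$ at once.
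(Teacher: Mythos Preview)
Your lower bound and the preliminary observations ($d$-connectivity, $\ge d$ common neighbours, the low-degree clique) are fine, and your double count of non-edges between $W$ and $V(R)$ is correct and does yield $|V(R)|\le (d-1)|W|$. But the proposal stops exactly where the real difficulty lies: you explicitly leave the ``merging step'' open, and none of the three suggested escape routes (re-choosing $R$, invoking Theorems~\ref{thm:delta:n/2+d}/\ref{thm:delta:(n+d)/2-1}, or a multi-vertex Henneberg move) is fleshed out. In particular, $R$ and any rigid subgraph you build on $W$ share no vertices at all, so the gluing lemma is useless as stated; the minimum-degree theorems need $\delta\ge (n+d-2)/2$, which your degree-sum hypothesis does not supply on either piece; and there is no off-the-shelf extension operation that absorbs a block of vertices each with only $d-1$ attachments. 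So as written this is an outline with the key step missing, not a proof.

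The paper avoids this structural impasse entirely. Instead of studying a maximum rigid subgraph, it works directly with the rank $r_d(G)$ via the expected-rank-contribution machinery of Section~\ref{subsec:rankcontr}. The two preparatory lemmas are: Lemma~\ref{lem:simpl:vert}, which (using your common-neighbour observation and $0$-extension) shows that $G+K(N_G(v))$ is $d$-rigid for every $v$; and Lemma~\ref{minlarge}, which disposes of the case $\delta(G)=d$, so one may assume $\delta(G)\ge d+1$. With these in hand, Lemma~\ref{lem:rc:geq:d} gives $\rc_d(G,v)\ge d+\tfrac12-\tfrac{d(d+1)}{2\deg(v)}$ for every $v$, and summing over $V$ reduces the problem to bounding $\sum_v 1/\deg(v)$. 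Splitting $V$ into $N_G(x_0)\cup\{x_0\}$ (degrees $\ge k=\delta(G)$) and its complement (degrees $\ge n+d-2-k$) gives $\sum_v 1/\deg(v)\le 2+\tfrac1k\le \tfrac{2d+3}{d+1}$, and then $n\ge d(d+2)$ is precisely what makes $r_d(G)\ge dn-\binom{d+1}{2}$. This averaging argument sidesteps the gluing problem you ran into; your maximum-rigid-subgraph strategy is closer in spirit to the $d=2,3$ proof in Section~\ref{sec:lowdim}, which succeeds only with heavy small-case analysis and does not obviously scale to general $d$.
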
 
	
	Theorem \ref{thm:intro:g:d-1} is a strengthening of \cite[Theorem 1.1]{KLM} mentioned above, and it implies Conjecture \ref{gnd:conj2} in the regime $n\geq d(d+2)$.
	Next, we verify Conjecture \ref{gnd:conj2} in the special cases $d\in \{2,3\}$.		
	Surprisingly, the major difficulties still arise when $n$ is particularly small.
	In Theorem
	\ref{rigid graph in R^3}, we show that
	if
	$\eta(G)\geq n+1$, then
	either 
	$G$ is isomorphic to
	one of four special graphs on at most seven vertices, or $G$ is rigid in $\R^3$.
	We obtain a similar result for $d=2$, as well (Theorem \ref{rigid graphs in R2}).
	These results imply an
	affirmative answer to Conjecture \ref{KLMconj} for
	$d=2,3$ (Theorems \ref{conjd3}, \ref{conjd2}).

	We briefly address the corresponding questions on global rigidity, and show that the (optimal) sufficient conditions for $d$-rigidity in terms of $\delta(G)$ or $\eta(G)$ directly imply (optimal) sufficient conditions for global $(d-1)$-rigidity.
	
	Finally, we apply our results to settle a question raised by Peled and Peleg \cite{PP}, asking whether there exists $\varepsilon>0$ such that the Erdős-Rényi random graph $G=G(n,1/2)$ is asymptotically almost surely $(\varepsilon n)$-rigid. For a brief outline of the background to this question, see Section \ref{sec:appl:random}. We will prove the following:
	\begin{restatable}{theorem}{random}\label{thm:random:1/2}
	   The random graph $G\sim G(n,1/2)$ is a.a.s.\ $d$-rigid for $d=\frac{7}{32}n-\frac{\sqrt{15n\log n}}{16}$.
	\end{restatable}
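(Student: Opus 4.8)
The goal is to exhibit, a.a.s., a spanning $d$-rigid subgraph of $G\sim G(n,1/2)$ with $d=\frac{7}{32}n-\frac{\sqrt{15n\log n}}{16}$; since $d$-rigidity is preserved under adding edges, producing such a subgraph suffices. The two rigidity operations I would rely on are \emph{gluing} (if $H_1,H_2$ are $d$-rigid and $|V(H_1)\cap V(H_2)|\ge d$, then $H_1\cup H_2$ is $d$-rigid) and \emph{$0$-extension} (attaching a new vertex by $d$ edges to a $d$-rigid graph keeps it $d$-rigid), with Theorem~\ref{thm:delta:n/2+d} as the intended engine. The immediate difficulty is that Theorem~\ref{thm:delta:n/2+d} requires minimum degree at least $\tfrac n2+d-1$, whereas a.a.s.\ $\delta(G)=\tfrac n2-(1+o(1))\sqrt{\tfrac{n\log n}{2}}<\tfrac n2$; moreover every linear-sized vertex set of $G$ spans density $\tfrac12+o(1)$, so \emph{no} induced subgraph has minimum degree exceeding half its order either. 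Hence the minimum-degree criterion cannot be applied off the shelf for any $d\ge 2$, and the real content is how far below this threshold rigidity can still be forced.

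My plan is to force rigidity by \emph{gluing overlapping halves}. Fix a split of $V$ into two sets $A,B$ with $|A|=|B|=\tfrac{n+s}{2}$ and $|A\cap B|=s$, chosen so that $s\ge d$. If $G[A]$ and $G[B]$ are each $d$-rigid, gluing yields $d$-rigidity of $G$. Iterating the split on each half reduces the claim for $G(n,1/2)$ to the same claim on smaller random graphs, and balancing the two constraints $d\le s=2|A|-n$ (enough overlap) and ``$G[A]$ is $d$-rigid'' drives the attainable ratio $d/n$ upward at each level. The subtlety is that this recursion, taken naively, is self-improving past the edge-count ceiling $d\le(1-\tfrac{1}{\sqrt2})n$ forced by $dn-\binom{d+1}{2}\le\binom n2/2$, so it cannot be iterated freely: the correlations between the two halves and the deterioration of the degree and overlap estimates as the pieces shrink must be tracked, and it is the point at which these estimates break down that fixes the constant at $7/32$.

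The probabilistic bookkeeping is the routine part. Every quantity I need to control --- the size of $A\cap B$ realised inside $G$, the number of neighbours a vertex of one half has in the shared part, and, for any residual vertices cleaned up by $0$-extension, the count $\deg_A(v)\sim\mathrm{Bin}(|A|,1/2)$ --- is a sum of independent $\mathrm{Bernoulli}(1/2)$ variables. Chernoff's inequality bounds each deviation of order $t$ by $\exp(-\Theta(t^2/|A|))$, and a union bound over the $O(n)$ vertices (or $O(n^2)$ pairs) that must meet their targets forces $t=\Theta(\sqrt{n\log n})$; propagating this uniform loss through the optimal split is what produces the lower-order correction $\tfrac{\sqrt{15n\log n}}{16}$.

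The main obstacle is the base of the construction: certifying $d$-rigidity of a dense but density-$\tfrac12$ graph in the regime $d=\Theta(n)$, precisely where the thresholds of Theorems~\ref{thm:delta:n/2+d} and~\ref{thm:delta:(n+d)/2-1} are not met outright, so that a genuinely new input beyond the minimum-degree criterion is needed there. Optimising the gluing so that the recursion terminates at the largest feasible ratio, while keeping all the simultaneous degree and overlap conditions satisfiable with a single $\sqrt{n\log n}$ loss, is what pins the answer to $d=\tfrac{7}{32}n-\tfrac{\sqrt{15n\log n}}{16}$; everything else (gluing, $0$-extension, Chernoff, and the union bound) is standard.
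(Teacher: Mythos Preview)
Your recursion goes the wrong way. If you split $V$ into two pieces $A,B$ of size $m=(n+s)/2$ with overlap $s\ge d$, then $G[A]\sim G(m,1/2)$ is still a density-$\tfrac12$ random graph, and you now need it to be $d$-rigid with the \emph{larger} ratio $d/m>d/n$. Iterating makes the subproblem strictly harder, not easier; the fixed point of the map $\alpha\mapsto \alpha/(2-\alpha)$ (obtained by setting $s=\alpha m$, $n=(2-\alpha)m$) is $\alpha=0$, so the only base case you can reach this way is the trivial one. You seem to sense this, since you concede that the base case ``requires a genuinely new input'' that you do not supply---but that base case is the entire problem. Gluing and $0$-extension on density-$\tfrac12$ pieces cannot manufacture the missing density.

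The paper's idea, which your proposal lacks, is to \emph{boost the density by contraction}. Pair the vertices arbitrarily into $n/2$ pairs and contract each pair; two contracted vertices are adjacent iff at least one of the four underlying edges is present, so the quotient is distributed as $G(n/2,15/16)$. Its minimum degree is a.a.s.\ $\tfrac{15}{16}\cdot\tfrac{n}{2}-(1+o(1))\sqrt{\tfrac{15}{256}\cdot\tfrac{n}{2}\cdot 2\log n}=\tfrac{15n}{32}-\tfrac{\sqrt{15n\log n}}{16}$, which exceeds $\tfrac{n/2}{2}+d-1$ precisely when $d<\tfrac{7n}{32}-\tfrac{\sqrt{15n\log n}}{16}$; this is where both the constant $7/32$ and the lower-order term come from, and Theorem~\ref{thm:delta:n/2+d} now applies directly to the quotient. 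Each contraction is then undone by a $d$-dimensional spider split, which preserves $d$-rigidity provided the two pre-images have at least $d$ common neighbours in the current graph; that count is a sum of independent indicators with mean at least $\tfrac{n-2}{4}>d+\Omega(n)$, so Hoeffding plus a union bound over the $n/2$ splits finishes. The contraction step is the ``new input'' you were looking for.
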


	Our proof methods are substantially different from those of \cite{KLM1}
	and \cite{KLM}. We verify new structural properties of the
	$d$-dimensional rigidity matroid and in most of the proofs we use
	 matroid theoretic arguments. In some cases we also
	rely on versions of a probabilistic argument that was
	introduced in \cite{vill} and further developed in \cite{JJV}.

	\subsubsection*{Paper organization and notation}
	
	In Section \ref{sec:prelim}, we collect the basic properties of
	$d$-rigid graphs and the $d$-dimensional generic rigidity matroid. 
	 In Section \ref{sec:mindegree}, we derive the sufficient conditions for $d$-rigidity that are expressed in terms of the minimum degree of a graph. 
	 Section \ref{sec:degreesum} contains the results concerning the more general bounds, based on degree sum conditions, for higher dimensions. 
     Section \ref{sec:lowdim} is dedicated to the cases $d=2,3$.
	In  Section \ref{sec:appl:random}, we show how our results can be applied to random graphs. 
	We conclude the paper by deducing similar sufficient conditions for
	global rigidity in Section \ref{sec:conluding}. 
    \medskip

	Throughout the paper we use the terms rigidity in $\R^d$ and $d$-rigidity interchangeably. We only consider simple graphs, that is, graphs without loops and parallel edges. 
	For a graph $G$, we let $V(G)$ and $E(G)$ denote the vertex and edge sets of $G$, respectively. For a subset $X \subseteq V(G)$ we let $G[X]$ denote the subgraph of $G$ induced by $X$, and we let $i_G(X) = |E(G[X])|$ be the number of edges induced by $X$ in $G$. We use $K(X)$ to denote the complete graph on vertex set $X$, and $K_n$ denotes the complete graph on $n$ vertices.  Let $C_n$ be the cycle on $n$ vertices. Given a positive integer $k$, we say that a graph is \emph{$k$-connected} if it has at least $k+1$ vertices and it remains connected after the removal of any set of fewer than $k$ vertices.
	In a graph $G=(V,E)$ we use $e_G(X,Y)$ to denote the number of
	edges from $X-Y$ to $Y-X$ for $X,Y\subseteq V$. When $Y=V-X$, we use $e_G(X)$ instead of $e_G(X,Y)$. For $X=\{v\}$ we simply write $e_G(v,Y)$,
	and we put $\deg_G(v)=e_G(v,V-\{v\})$.
	Given a vertex $v \in V(G)$, $N_G(v)$ denotes the set of neighbors of $v$.
	Thus $\deg_G(v) = |N_G(v)|$ is the degree of $v$.
	The minimum degree in $G$ is denoted by $\delta(G)$.
	For a vertex $v$ in $V$ and $Y\subseteq V$, define $N_G(v,Y)=N_G(v)\cap Y$.
	We omit the graph $G$ from the subscripts if $G$ is clear from the context.

	\color{black}

	\section{Preliminaries}
	\label{sec:prelim}

    \begin{lemma}
			\label{4conn}
			Let $G=(V,E)$ be a non-complete graph on $n$ vertices and let $r$ be an integer with $r\leq n-3$.
			Suppose that \begin{equation}\label{eq:n+r}
			    \deg(u)+\deg(v)\geq n+r
			\end{equation} for all
			pairs $u,v\in V$ of non-adjacent vertices.
			Then $G$ is $(r+2)$-connected. Furhermore, the bound \eqref{eq:n+r} is optimal.
		\end{lemma}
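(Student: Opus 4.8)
The plan is to prove $(r+2)$-connectivity directly by contradiction, and then exhibit an extremal example to establish optimality of the bound.

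The plan is to prove $(r+2)$-connectivity by contradiction using the standard ``two vertices across a separator'' argument, and then to exhibit an extremal graph witnessing optimality. First I would observe that since $r\le n-3$ we have $n\ge r+3$, so $G$ has enough vertices for the notion of $(r+2)$-connectivity to be meaningful. Suppose, for contradiction, that $G$ is not $(r+2)$-connected. Then there is a vertex set $S\subseteq V$ with $|S|\le r+1$ such that $G-S$ is disconnected; write $V\setminus S=A\cup B$ with $A,B$ nonempty and with no edges between $A$ and $B$.

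Next, pick any $u\in A$ and $v\in B$. Since there is no edge between $A$ and $B$, the pair $u,v$ is non-adjacent, so \eqref{eq:n+r} applies. Because $N(u)\subseteq (A\setminus\{u\})\cup S$ and $N(v)\subseteq (B\setminus\{v\})\cup S$, I can bound $\deg(u)\le |A|-1+|S|$ and $\deg(v)\le |B|-1+|S|$. Adding these and using $|A|+|B|=n-|S|$ gives $\deg(u)+\deg(v)\le n+|S|-2\le n+r-1$, contradicting \eqref{eq:n+r}. This establishes the connectivity claim; the computation is completely routine, so I expect no real obstacle here. (Note that the hypothesis that $G$ is non-complete is not actually needed for this direction, since a complete graph has no separator $S$ at all; it merely guarantees that \eqref{eq:n+r} is not vacuous.)

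Finally, for optimality I would construct, for each admissible $n$ and $r$, a graph $G$ satisfying $\deg(u)+\deg(v)\ge n+r-1$ for all non-adjacent $u,v$ that fails to be $(r+2)$-connected, showing that $n+r$ cannot be weakened to $n+r-1$. Take a set $S$ with $|S|=r+1$ and partition the remaining $n-(r+1)\ge 2$ vertices into two nonempty sets $A$ and $B$; let $G$ be the graph in which $A\cup S$ and $B\cup S$ each induce a complete graph and there are no edges between $A$ and $B$. Then $S$ is a separator of size $r+1$, so $G$ is not $(r+2)$-connected, and the only non-adjacent pairs are $\{u,v\}$ with $u\in A$, $v\in B$, for which $\deg(u)+\deg(v)=(|A|-1+|S|)+(|B|-1+|S|)=n+|S|-2=n+r-1$, exactly one below the threshold.

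Since both directions reduce to short degree counts, I do not anticipate a genuine conceptual obstacle; the only point requiring mild care is the bookkeeping of the sizes $|A|,|B|,|S|$ and the exact inequality $|S|\le r+1$ versus the equality $|S|=r+1$ in the extremal construction, together with checking the boundary case forced by $r\le n-3$ (namely $n=r+3$, where $|A|=|B|=1$) so that the example is well defined.
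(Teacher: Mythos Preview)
Your proof is correct and follows essentially the same route as the paper's: the paper phrases the key step as ``any non-adjacent pair $u,v$ has at least $r+2$ common neighbours, hence no separator of size $\le r+1$ exists,'' while you phrase it as a direct degree count across a putative small separator, but these are two sides of the same inequality $\deg(u)+\deg(v)\le n+|S|-2$. Your optimality construction (two cliques glued along $r+1$ vertices) is exactly the one the paper gives.
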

		
		\begin{proof}
			The lower bound on the degree sum implies that
			$u$ and $v$ have at least $r+2$
			common neighbours in $G$ for each non-adjacent vertex pair $u,v$.
			Thus $G$ cannot have a vertex separator of size at most $r+1$. Furthermore, graphs constructed by gluing together two disjoint complete graphs along $r+1$ vertices demonstrate that the bound \eqref{eq:n+r} is optimal.
		\end{proof}

	\subsection{Graph operations that preserve rigidity}

	Let $G =(V,E)$ be a graph. The $d$-dimensional $0$-$extension$ operation adds a new vertex $v$ to $G$ and $d$ new edges incident with $v$. The $1$-$extension$ operation removes an edge $uw$, and adds a new vertex $v$ and $d+1$ new edges, including $vu$, $vw$.
	The following lemma is well-known, for a proof, see e.g.\ \cite{Whlong}.
	
	\begin{lemma}
		\label{ext}
		Let $H$ be a rigid graph in $\R^d$ and let $G$ be obtained from $H$ by a $0$- or
		$1$-extension. Then $G$ is rigid in $\R^d$.
	\end{lemma}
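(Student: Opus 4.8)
The plan is to work with the $d$-dimensional rigidity matrix and the standard fact that, for generic configurations, rigidity coincides with infinitesimal rigidity, and that the rank of the rigidity matrix at a generic configuration equals the maximum of its rank over all configurations. Thus, to prove that an extension $G$ is rigid, it suffices to exhibit a single configuration of $G$ whose rigidity matrix attains the rank $d|V(G)|-\binom{d+1}{2}$ of a rigid framework; equivalently, to show that at a generic placement every infinitesimal motion of $(G,p)$ is trivial. Throughout I write an infinitesimal motion as a map $m\colon V\to\R^d$ satisfying $(m_x-m_y)\cdot(p_x-p_y)=0$ for every edge $xy$.

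For the $0$-extension, let $v$ be the new vertex joined to $u_1,\dots,u_d\in V(H)$, and place $v$ generically. Given any infinitesimal motion $m$ of $(G,p)$, its restriction to $V(H)$ is an infinitesimal motion of the rigid framework $(H,p|_H)$, hence trivial. The $d$ edge constraints at $v$ read $(m_v-m_{u_i})\cdot(p_v-p_{u_i})=0$, a linear system for $m_v$ whose coefficient matrix has rows $p_v-p_{u_i}$; for a generic placement of $v$ these $d$ vectors are linearly independent, so $m_v$ is uniquely determined. Since the trivial motion already satisfies these equations, $m_v$ must equal that trivial value, and so $m$ is trivial. Hence $(G,p)$ is infinitesimally rigid and $G$ is rigid.

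For the $1$-extension we delete an edge $uw$ and add $v$ joined to $u,w$ and to $d-1$ further vertices $x_1,\dots,x_{d-1}$. The key device is to choose a non-generic but convenient placement in which $v$ lies on the line through $p_u$ and $p_w$; by maximality of the generic rank it is enough to verify infinitesimal rigidity there. With $p_v$ collinear with $p_u,p_w$, the direction vectors $p_v-p_u$ and $p_v-p_w$ are both parallel to $p_w-p_u$, so the constraints from $vu$ and $vw$ reduce to $(m_v-m_u)\cdot(p_w-p_u)=0$ and $(m_v-m_w)\cdot(p_w-p_u)=0$; subtracting them recovers exactly the constraint $(m_u-m_w)\cdot(p_u-p_w)=0$ of the deleted edge. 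Consequently the restriction of $m$ to $V(H)$ satisfies all edge constraints of $H$ and is therefore trivial. It remains to pin down $m_v$: the surviving independent constraints at $v$ are one in the direction $p_w-p_u$ together with the $d-1$ constraints from the edges $vx_j$, and for a suitably chosen placement the $d$ directions $p_w-p_u,\,p_v-p_{x_1},\dots,p_v-p_{x_{d-1}}$ are linearly independent, forcing $m_v$ to the trivial value. The main obstacle is precisely this last bookkeeping in the $1$-extension: one must arrange the placement so that collapsing $v$ onto the line $uw$ does not simultaneously destroy the independence of the remaining $d$ directions, after which the passage from this special configuration to a generic one is immediate from the max-rank principle.
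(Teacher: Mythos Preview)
Your argument is the standard one and is correct; the paper does not supply its own proof of this lemma but simply cites Whiteley \cite{Whlong}, where essentially the argument you give appears. The only step you leave implicit---that the $d$ directions $p_w-p_u,\,p_v-p_{x_1},\dots,p_v-p_{x_{d-1}}$ are independent when $p_v$ lies on the line through $p_u$ and $p_w$---is easily completed: modulo multiples of $p_w-p_u$ these vectors equal $p_w-p_u,\,p_u-p_{x_1},\dots,p_u-p_{x_{d-1}}$, which are independent precisely because the $d+1$ neighbours $u,w,x_1,\dots,x_{d-1}$ of $v$ are affinely independent at a generic placement of $H$.
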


	Suppose $z$ is a vertex of a graph $H$. 
	The {\em $d$-dimensional vertex splitting (resp.\ spider splitting) operation} constructs a new graph $G$ from $H$  by deleting $z$ and then adding two new vertices $u$ and $v$ with $N_G(u)\cup N_G(v)=N_H(z)\cup \{u,v\}$ 
	(resp. $N_G(u)\cup N_G(v)=N_H(z)$)
	and $|N_G(u)\cap N_G(v)|\geq d-1$ (resp. $|N_G(u)\cap N_G(v)|\geq d$).
	We can view vertex splitting as an inverse operation to contracting the edge $uv$, while
	spider splitting corresponds to contracting a non-adjacent vertex pair.
	Whiteley \cite{Wvsplit} showed that 
	these operations preserve rigidity in $\R^d$, see also \cite{Whlong}.
	We shall refer to this result as Whiteley's vertex splitting theorem.

	The following lemmas are
	easy to prove by using the 3-dimensional version of Lemma \ref{ext}.
	
	\begin{lemma}
		\label{cycle}
		Let $H = (V,E)$ be a rigid graph in $\R^3$ and
		let $\{v_i^1,v_i^2\}$, $1\leq i\leq k$, be pairs of distinct vertices of $H$
		with
		$|\cup_{i=1}^k \{v_i^1,v_i^2\}| \geq 3$ and $k\geq 3$. Let $G$ be obtained from the disjoint union of $H$ and a $k$-cycle $u_1u_2\cdots u_ku_1$ by adding all edges $u_iv_i^1,u_iv_i^2$,
		for $1 \leq i\leq k$. Then
		$G$ is rigid in $\R^3$. 
	\end{lemma}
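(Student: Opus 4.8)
The plan is to build $G$ from $H$ by a sequence of $0$- and $1$-extensions and to invoke Lemma~\ref{ext} (with $d=3$) at each step, proceeding by induction on $k$. Write $P_i=\{v_i^1,v_i^2\}$ for the attachment pair of $u_i$. In $G$ each $u_i$ has degree $4$, with neighbours $u_{i-1},u_{i+1},v_i^1,v_i^2$ (indices mod $k$), which is exactly the degree produced by a $1$-extension in $\R^3$; this is the reason to expect $0$- and $1$-extensions to suffice.

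For the inductive step ($k\geq 4$) I would realize $G$ as a single $1$-extension of a smaller instance. Pick a cycle vertex $u_i$, delete it, and add the edge $u_{i-1}u_{i+1}$; call the result $G'$. Then $G'$ is again a rigid host plus a cycle, now on the $(k-1)$-cycle $u_1\cdots u_{i-1}u_{i+1}\cdots u_k$ with attachment pairs $\{P_j: j\neq i\}$, and $G$ is recovered from $G'$ by deleting $u_{i-1}u_{i+1}$ and adding $u_i$ joined to $u_{i-1},u_{i+1},v_i^1,v_i^2$, a legal $1$-extension since $v_i^1\neq v_i^2$ lie in $H$ and are distinct from the two cycle vertices. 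For $k\geq 4$ the edge $u_{i-1}u_{i+1}$ is new, so $G'$ is simple and its cycle has length $\geq 3$. To apply the induction hypothesis I must keep $|\bigcup_{j\neq i}P_j|\geq 3$, so I first record a short combinatorial claim: whenever $k\geq 3$ and $|\bigcup_{i}P_i|\geq 3$, some index $i$ is \emph{removable}, i.e.\ $|\bigcup_{j\neq i}P_j|\geq 3$. Indeed, if no index were removable then for every $i$ all pairs other than $P_i$ would collapse to a single $2$-set; comparing two indices and using a third (available since $k\geq 3$) forces all $P_i$ to coincide, contradicting $|\bigcup_i P_i|\geq 3$. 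Deleting a removable $u_i$ and applying the induction hypothesis to $G'$ then completes the step.

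The base case $k=3$ must be built by hand, as it cannot be reduced further. Here $G$ is a triangle $u_1u_2u_3$ with attachments, and I would add the three vertices in a suitable order $w_1,w_2,w_3$: a $0$-extension creating $w_1$ on $P_{w_1}$ together with one spare vertex of $P_{w_2}$; a $1$-extension creating $w_2$ that removes the spare edge at $w_1$ (so $w_1$ acquires its cycle edge to $w_2$) and attaches $w_2$ to both of $P_{w_2}$ and to one spare vertex of $P_{w_3}$; and a final $1$-extension creating $w_3$ that removes the spare edge at $w_2$ and attaches $w_3$ to both of $P_{w_3}$ and to $w_1$, simultaneously closing the triangle. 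Each step needs its spare vertex to lie outside the current pair, which is possible exactly when $P_{w_1}\neq P_{w_2}$ and $P_{w_2}\neq P_{w_3}$. Since the three pairs are not all equal (as $|\bigcup_i P_i|\geq 3$), I take $w_2$ to be a vertex whose pair differs from the other two and route the at most one coinciding adjacency onto the closing edge $w_3w_1$, where no distinctness is required.

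The main obstacle is precisely the possibility that consecutive attachment pairs coincide, which breaks the naive ``add the cycle vertices one at a time around the cycle'' argument: a $1$-extension needs four \emph{distinct} new neighbours, and two equal consecutive pairs leave only three. The induction neutralizes this for $k\geq 4$ by deleting a removable vertex rather than a fixed one, pushing all coincidence handling into the single base case $k=3$, where the freedom to choose the starting vertex and the orientation of the triangle suffices. I expect the distinctness and simplicity bookkeeping in the base case to be the most delicate point, but it is routine once the ordering above is fixed.
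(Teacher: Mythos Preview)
Your proposal is correct and follows exactly the approach the paper indicates: the paper does not spell out a proof but merely remarks that the lemma ``is easy to prove by using the 3-dimensional version of Lemma~\ref{ext}'', and your induction on $k$ with the explicit three-step $0$/$1$-extension construction for the base case $k=3$ is a clean way to carry this out. The removability claim and the choice of $w_2$ in the base case are the right bookkeeping devices to handle coinciding attachment pairs, and all distinctness checks go through as you describe.
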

	
	\begin{lemma}
		\label{union}
		Suppose that a graph $G=(V,E)$ contains two disjoint subgraphs $H_1,H_2$ on at least three vertices with
		$V=V(H_1)\cup V(H_2)$
		such that $H_1,H_2$ are both rigid in $\R^3$ and  $e(v,V(H_1))\leq 2$ for all $v\in V(H_2)$. 
		If
		$H_1$ contains a set $A=\{a_1,a_2,a_3\}$ of three vertices with $e(a_i,V(H_2))\geq 2$ for all $1\leq i\leq 3$,
		then $G$ is rigid in $\R^3$.
	\end{lemma}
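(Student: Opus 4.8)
The plan is to argue through infinitesimal rigidity. Since the framework is generic, it suffices to show that a generic realisation $(G,p)$ admits no non-trivial infinitesimal motion. Let $m:V\to\R^3$ be an infinitesimal motion, i.e. $(p_u-p_v)\cdot(m_u-m_v)=0$ for every edge $uv\in E$. The restriction of $m$ to $V(H_2)$ is an infinitesimal motion of the (generic, hence infinitesimally) rigid framework $H_2$, so it is trivial; subtracting the corresponding screw from $m$ preserves both the motion property and the triviality class of $m$, so we may assume $m_w=0$ for all $w\in V(H_2)$. Now $m|_{V(H_1)}$ is an infinitesimal motion of the rigid framework $H_1$, hence a single screw: $m_u=t+\omega\times p_u$ for all $u\in V(H_1)$ and some $t,\omega\in\R^3$. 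It remains to show that the edges between $H_1$ and $H_2$ force $(t,\omega)=(0,0)$, since then $m\equiv 0$ on $V=V(H_1)\cup V(H_2)$ and $G$ is rigid.

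For each $a_i$ fix two distinct neighbours $x_i,y_i\in V(H_2)$, which exist because $e(a_i,V(H_2))\ge 2$. Writing $\dot a_i:=t+\omega\times p_{a_i}$ and using $m_{x_i}=m_{y_i}=0$, the motion condition on $a_ix_i,a_iy_i$ gives $(p_{a_i}-p_{x_i})\cdot\dot a_i=(p_{a_i}-p_{y_i})\cdot\dot a_i=0$. For generic $p$ the two directions $p_{a_i}-p_{x_i}$ and $p_{a_i}-p_{y_i}$ are independent, so $\dot a_i$ is pinned to the line $\langle n_i\rangle$ with $n_i:=(p_{a_i}-p_{x_i})\times(p_{a_i}-p_{y_i})$; write $\dot a_i=\lambda_i n_i$. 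On the other hand $\dot a_1,\dot a_2,\dot a_3$ are the velocities of the three points $p_{a_1},p_{a_2},p_{a_3}$ under a screw, so they satisfy the three rod conditions $(\dot a_i-\dot a_j)\cdot(p_{a_i}-p_{a_j})=0$. Substituting $\dot a_i=\lambda_i n_i$ turns these into a homogeneous $3\times3$ system in $(\lambda_1,\lambda_2,\lambda_3)$. If that system is non-singular then $\lambda_1=\lambda_2=\lambda_3=0$, so $\dot a_1=\dot a_2=\dot a_3=0$, and since $p_{a_1},p_{a_2},p_{a_3}$ are not collinear this forces $(t,\omega)=(0,0)$, finishing the argument.

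The heart of the proof — and the only place the hypothesis $e(v,V(H_1))\le 2$ enters — is the non-singularity of this $3\times3$ system. Its determinant is a polynomial in the coordinates, and the essential obstruction to its being non-zero is the degenerate pattern in which all six chosen neighbours $x_i,y_i$ lie on a common line $\ell$: in that case the infinitesimal rotation of $H_1$ about $\ell$ is a non-trivial screw satisfying every cross-edge equation, so the system is singular. The condition $e(v,V(H_1))\le 2$ forces the (at least six) edges between $\{a_1,a_2,a_3\}$ and $V(H_2)$ to meet at least three distinct vertices of $H_2$, and for generic $p$ three distinct points are never collinear; this rules out the degeneracy, and I expect the remaining verification that the determinant is not identically zero to be the main technical point. (Equivalently and more combinatorially, one may adjoin the redundant complete graphs on $\{a_1,a_2,a_3\}$ and on the neighbour set: these edges lie in the rigidity-matroid closures of the rigid graphs $H_1$ and $H_2$, hence leave the rank unchanged, and the resulting core is a generically rigid two-body/six-bar graph; in the extremal $6$-cycle attachment pattern it is exactly the octahedron $K_{2,2,2}$, obtained from $K_4$ by a $0$- and a $1$-extension and therefore rigid by Lemma \ref{ext}.) Everything else in the argument is just the defining property of rigid frameworks applied to $H_1$ and $H_2$.
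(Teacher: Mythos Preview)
Your infinitesimal-rigidity setup is sound and genuinely different from the paper's approach: you correctly normalise so that $m|_{V(H_2)}=0$ and $m|_{V(H_1)}$ is a single screw, and you correctly reduce to the non-singularity of a $3\times 3$ linear system in $(\lambda_1,\lambda_2,\lambda_3)$. But you do not verify this non-singularity. You identify one obstruction (the six chosen neighbours lying on a line) and argue that the hypothesis $e(v,V(H_1))\le 2$ forces at least three distinct neighbours, hence excludes it; however, ruling out one degenerate locus is not the same as proving the determinant polynomial is not identically zero. You yourself flag this as ``the main technical point'', so the argument is self-admittedly incomplete. (For the record the determinant \emph{is} generically nonzero: for the tightest coincidence pattern --- the $6$-cycle with exactly three distinct neighbours --- a single explicit configuration already gives a nonzero value, and every looser pattern specialises to this one.)

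Your parenthetical combinatorial alternative is essentially the paper's proof. One passes to the ${\cal R}_3$-closure so that $K(A)$ and $K(V(H_2))$ are available, shows that the ``core'' $K(A)\cup K(N)\cup(\text{cross edges})$ is rigid, and then glues first with $H_2$ and then with $H_1$, each along $\ge 3$ common vertices, via $0$-extensions. Your claim that the extremal core is the octahedron $K_{2,2,2}$, obtainable from $K_4$ by a $0$-extension followed by a $1$-extension, is correct --- the intermediate $K_4$ contains an edge that the $1$-extension subsequently deletes, so there is no contradiction with $K_{2,2,2}$ containing no $K_4$. But you handle only this one attachment pattern; to finish you must also dispose of the (easier) cases where $|N|\ge 4$ or where two of the $a_i$ share their neighbour-pair, in which a $K_4$ already sits inside the closure and the three $a_i$ can be attached directly by $0$-extensions. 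The paper's proof is exactly this extension argument via Lemma~\ref{ext}, carried out combinatorially rather than through the infinitesimal detour.
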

	
	\subsection{The rigidity matroid}
	
	Let $d$ be a positive integer. 
	The {\it $d$-dimensional generic rigidity matroid} of a graph $G$, denoted by ${\cal R}_d(G)$,
	is defined on the  
	edge set of $G$. It is the restriction of ${\cal R}_d(K_{|V(G)|})$ to the edge set of $G$.
    This matroid encodes several fundamental rigidity properties of 
	(generic $d$-dimensional bar-and-joint frameworks of)
	$G$. 
	For example, the rigidity of $G$ in $\R^d$ can be characterized by the rank of ${\cal R}_d(G)$, for all $d\geq 1$. 
	We shall denote the rank of ${\cal R}_d(G)$ by $r_d(G)$.
	It is well-known that 
	\begin{equation}
		\label{gl}
		r_d(G)\leq d|V|-\binom{d+1}{2} \mbox{ for all graphs $G=(V,E)$  with $|V|\geq d+1$.}\notag
	\end{equation}
	Equality holds if and only if $G$ is rigid in $\R^d$.
	The ($d$-dimensional) {\it degrees of freedom} of $G$, denoted by $\dof_d(G)$, is the
	difference between the rank of a rigid graph on $V(G)$ and $r_d(G)$.
	Thus, if $|V|\geq d+1$, then $\dof_d(G)=d|V|-\binom{d+1}{2}-r_d(G)$.
	We say that a graph $G=(V,E)$ is 
	{\it ${\cal R}_d$-closed}, if $r_d(G+uv)=r_d(G)+1$ for all non-adjacent
	vertex pairs $u,v\in V$.
	A non-adjacent pair $u,v\in V$ is {\it ${\cal R}_d$-linked} (resp.
	{\it ${\cal R}_d$-loose}) in $G$ 
	if $r_d(G+uv)=r_d(G)$ (resp. $r_d(G+uv)=r_d(G)+1$) holds.
	The {\it ${\cal R}_d$-closure} of $G$ is the unique maximal 
	supergraph $G'$ of $G$ with $r_d(G')=r_d(G)$. It is obtained from $G$
	by adding the edges $uv$ for all ${\cal R}_d$-linked pairs $u,v$ of $G$.
	
	We call an edge $e\in E$ an {\itshape ${\cal R}_d$-bridge} of $G$
if $r_d(G-e)=r_d(G)-1$. A graph $H$ with edge set $E$ is 
{\itshape ${\cal R}_d$-independent} (resp. 
an 
{\itshape ${\cal R}_d$-circuit}), 
if $E$ is independent (resp.
$E$ is a circuit) in ${\cal R}_d(H)$. Thus an edge is
an ${\cal R}_d$-bridge of $G$ if and only if it is not contained
by a subgraph $H$ of $G$ which is an ${\cal R}_d$-circuit.

	Let $G=(V,E)$ be a graph and $U\subseteq V$.
	The graph obtained from $G$ by adding a new vertex $w$ and a new
	edge from $w$ to each vertex in $U$ is denoted by $G^{w,U}$.
	The graph $G^w:=G^{w,V}$ is called the 
	{\it cone} of $G$.
	Part (i) of the following theorem is a fundamental result of Whiteley \cite{Whcone}. Parts (ii) and (iii) are simple corollaries of (i).
	
	\begin{theorem} \cite{Whcone}
		\label{thm:coning}
		Let $G=(V,E)$ be a graph and let $u,v\in V$. Then\\
		(i) $G$ is ${\cal R}_d$-independent
  (resp. $d$-rigid) if and only if $G^w$ is ${\cal R}_{d+1}$-independent
  (resp. $(d+1)$-rigid),\\
		(ii) $\{u,v\}$ is $\mathcal{R}_d$-linked in $G$ if and only if $\{u,v\}$ is $\mathcal{R}_{d+1}$-linked in $G^w$,\\
		(iii) $\dof_d(G)=\dof_{d+1}(G^w)$.
	\end{theorem}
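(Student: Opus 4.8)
The plan is to treat part (i) as given: it is Whiteley's coning theorem from \cite{Whcone}, whose proof identifies the infinitesimal motions of a generic $d$-dimensional framework $(G,p)$ with those of a generic lift of $G^w$ placed on a cone in $\R^{d+1}$. I would cite (i) and deduce (ii) and (iii) from a single rank identity, namely
\begin{equation}\label{eq:coneprop:rank}
	r_{d+1}(G^w)=r_d(G)+|V|.
\end{equation}

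First I would derive \eqref{eq:coneprop:rank} from (i). Write $C=\{wv:v\in V\}$ for the set of cone edges, so that $E(G^w)=E\cup C$ and $|C|=|V|$. For a set $F\subseteq E$, apply (i) to the spanning subgraph $(V,F)$, whose cone is $(V\cup\{w\},F\cup C)$; since independence of an edge set is inherited from ${\cal R}_d(K(V))$, this yields that $F$ is independent in $\mathcal{R}_d(G)$ if and only if $F\cup C$ is independent in $\mathcal{R}_{d+1}(G^w)$. Taking $F=\emptyset$ shows $C$ is independent, hence $C$ extends to a basis $B$ of $\mathcal{R}_{d+1}(G^w)$; writing $B=C\cup J$ with $J\subseteq E$, the equivalence forces $J$ to be a maximal independent subset of $E$ in $\mathcal{R}_d(G)$, so $|J|=r_d(G)$. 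Since all bases have equal size, $r_{d+1}(G^w)=|B|=|V|+r_d(G)$, which is \eqref{eq:coneprop:rank}.

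Next, (ii) is immediate: for a non-adjacent pair $u,v\in V$ we have $(G+uv)^w=G^w+uv$, so applying \eqref{eq:coneprop:rank} to both $G$ and $G+uv$ gives $r_{d+1}(G^w+uv)-r_{d+1}(G^w)=r_d(G+uv)-r_d(G)$. Thus the left-hand difference vanishes exactly when the right-hand one does, i.e. $\{u,v\}$ is $\mathcal{R}_{d+1}$-linked in $G^w$ if and only if it is $\mathcal{R}_d$-linked in $G$. For (iii), recall that $\dof_d(G)=r_d(K(V))-r_d(G)$ and $\dof_{d+1}(G^w)=r_{d+1}(K(V\cup\{w\}))-r_{d+1}(G^w)$. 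Applying \eqref{eq:coneprop:rank} once to $K(V)$ (whose cone is $K(V\cup\{w\})$) and once to $G$, both rank functions pick up the same additive term $|V|$; these cancel, giving $\dof_{d+1}(G^w)=r_d(K(V))-r_d(G)=\dof_d(G)$.

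The only genuine content is part (i), which I cite; the rest is bookkeeping with the rank function. The one place to be careful is the derivation of \eqref{eq:coneprop:rank}: one must invoke (i) at the level of spanning subgraphs (not merely for $G$ itself) to obtain the independence equivalence for an arbitrary edge subset $F$, and then use the matroid extension property to guarantee that some basis of $\mathcal{R}_{d+1}(G^w)$ contains all of $C$. Once \eqref{eq:coneprop:rank} is established, (ii) and (iii) are one-line computations, and the argument is uniform in $|V|$ precisely because phrasing $\dof$ through $r_d(K(V))$ avoids any case distinction based on whether $|V|\geq d+1$.
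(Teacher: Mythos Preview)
Your proposal is correct and matches the paper's approach exactly: the paper cites part (i) as Whiteley's theorem and states only that (ii) and (iii) are ``simple corollaries of (i)'' without providing details. You have supplied those details cleanly via the rank identity $r_{d+1}(G^w)=r_d(G)+|V|$, and your derivation of this identity from (i) (by applying the independence equivalence to spanning subgraphs and extending $C$ to a basis) is sound.
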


	The next lemma is also a corollary of the coning theorem.
	
	\begin{lemma}\label{lem:dim:addedges}
		Let $G=(V,E)$ be a graph and let $\{u,v\}$ be an 
		$\mathcal{R}_d$-loose vertex pair of $G$.
		Suppose that $F$ is a set of new edges with $uv\notin F$. 
		Then $\{u,v\}$ is $\mathcal{R}_{d+|F|}$-loose in $G+F$.
	\end{lemma}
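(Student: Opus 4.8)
The plan is to induct on $|F|$, the case $|F|=0$ being trivial, and to reduce the inductive step to the following one-edge statement: if $\{u,v\}$ is $\mathcal{R}_m$-loose in a graph $H$ and $f=xy\notin E(H)\cup\{uv\}$, then $\{u,v\}$ is $\mathcal{R}_{m+1}$-loose in $H+f$. Granting this, we add the edges of $F$ one by one while raising the dimension by one at each step; since the edges of $F$ are new and distinct from $uv$, at every stage the current graph and the pair $\{u,v\}$ satisfy the hypotheses of the one-edge statement, and after $|F|$ steps we reach $G+F$ in dimension $d+|F|$.

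To prove the one-edge statement I would first invoke Theorem \ref{thm:coning}(ii) in its loose formulation: $\{u,v\}$ is $\mathcal{R}_m$-loose in $H$ if and only if it is $\mathcal{R}_{m+1}$-loose in the cone $H^w$, where $w$ is joined to every vertex of $V(H)$. Next I would use that looseness is inherited by spanning subgraphs in a fixed dimension, which is immediate from the closure description: if $uv\notin\cl(E(K))$ and $K'\subseteq K$ has the same vertex set as $K$, then $\cl(E(K'))\subseteq\cl(E(K))$, so $uv\notin\cl(E(K'))$. Assuming $|V(H)|\geq m+2$, choose $S=\{x,y,c_1,\dots,c_m\}\subseteq V(H)$ with $|S|=m+2$, and apply this to the spanning subgraph $H^{w,S}\subseteq H^w$ obtained by keeping only the edges from $w$ to $S$; this shows that $\{u,v\}$ is $\mathcal{R}_{m+1}$-loose in $H^{w,S}$.

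The final step is to recognise $H^{w,S}$ as the $(m+1)$-dimensional $1$-extension of $H+f$ that deletes $f=xy$ and inserts $w$ together with the $m+2$ edges $wx,wy,wc_1,\dots,wc_m$, and to transfer looseness of the untouched pair $\{u,v\}$ back across this operation. I would do this at the level of degrees of freedom: a $1$-extension preserves $\dof_{m+1}$, and it commutes with adding the test edge $uv$ (since $uv\neq f$ and $uv$ meets none of the new edges at $w$), so that the $1$-extension of $H+f+uv$ equals $H^{w,S}+uv$. Hence $\dof_{m+1}(H^{w,S})=\dof_{m+1}(H+f)$ and $\dof_{m+1}(H^{w,S}+uv)=\dof_{m+1}(H+f+uv)$, whence $\{u,v\}$ is $\mathcal{R}_{m+1}$-loose in $H^{w,S}$ precisely when it is $\mathcal{R}_{m+1}$-loose in $H+f$. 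This yields the one-edge statement and closes the induction.

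The main obstacle is the dof-preservation of the $1$-extension: Lemma \ref{ext} only records that a $1$-extension preserves rigidity, whereas here I need the stronger matroidal fact that it raises $r_{m+1}$ by exactly $m+1$ for an arbitrary graph, in particular even when the deleted edge $f$ is an $\mathcal{R}_{m+1}$-bridge of $H+f$. The natural route is a generic rigidity-matrix computation that positions $w$ so that its $m+2$ rows have rank $m+1$ in the new coordinate block and recover the constraint destroyed by deleting $f$; once this is available, the commuting identity and the coning/subgraph steps are routine. The remaining degenerate range $|V(H)|\le m+1$, where $S$ cannot be formed, is harmless: there $K_{|V(H)|}$ is $\mathcal{R}_{m+1}$-independent, so $uv$ is automatically an $\mathcal{R}_{m+1}$-bridge of $H+f+uv$ and the one-edge statement holds trivially.
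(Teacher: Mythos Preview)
Your inductive reduction to a single edge is fine, and steps 1--3 (cone with a new vertex $w$, then pass to the spanning subgraph $H^{w,S}$) are correct. The problem is step 4: the assertion that a $1$-extension preserves $\dof_{m+1}$ is not merely ``not stated in the paper''---it is false in general. Take $d=2$, let $G$ be $K_4$ on $\{a,b,x_1,x_2\}$ together with an isolated vertex $c$, and perform the $2$-dimensional $1$-extension that deletes $ab$ and adds a new vertex $w$ with neighbours $a,b,c$. Then $\dof_2(G)=2$ but $\dof_2(G^*)=1$: the pendant edge $wc$ absorbs one of the degrees of freedom contributed by the formerly isolated $c$. The rigidity-matrix computation you propose (placing $w$ appropriately to recover the deleted constraint) is exactly the standard proof that $1$-extension preserves \emph{independence}; it does not yield the rank identity $r_{m+1}(G^*)=r_{m+1}(G)+(m+1)$ for arbitrary $G$, and the example shows it cannot. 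So the chain $\dof_{m+1}(H^{w,S})=\dof_{m+1}(H+f)$ and $\dof_{m+1}(H^{w,S}+uv)=\dof_{m+1}(H+f+uv)$ breaks, and your transfer of looseness across the $1$-extension is unjustified.

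The paper's proof sidesteps this entirely with one small change of viewpoint: instead of coning with a \emph{new} vertex $w$ and then trying to undo the extra vertex via a $1$-extension, it cones with an \emph{existing} vertex, namely an endpoint $x\notin\{u,v\}$ of the new edge $f=xy$. Since $\{u,v\}$ is $\mathcal R_d$-loose in $G$, it is $\mathcal R_d$-loose in the subgraph $G-x$; by Theorem~\ref{thm:coning}(ii) it is $\mathcal R_{d+1}$-loose in $(G-x)^x$; and $G+f$ is a spanning subgraph of $(G-x)^x$, so looseness descends. This uses only coning and monotonicity under subgraphs---precisely the two ingredients you already invoked---and avoids any appeal to $1$-extensions or dof bookkeeping.
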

	\begin{proof}
		It suffices to prove the statement for $|F|=1$. Suppose that $F=\{xy\}$.
		We may assume that $x\notin \{u,v\}$.  Let $G'$ be the graph obtained from $G$ 
		by adding the edges $xz$ for all $z\in V-N_G(x)-\{x\}$. Since $\{u,v\}$ is $\mathcal{R}_{d}$-loose in $G$, it is also $\mathcal{R}_{d}$-loose in $G-x$. Hence
		$\{u,v\}$ is $\mathcal{R}_{d+1}$-loose in $G'$ by Theorem \ref{thm:coning}. Since $G+F$ is a subgraph of $G'$, the lemma follows.
	\end{proof}

    	We shall use the next simple lemma several times.
	
	\begin{proposition}\label{prop:easy:bridges}
		Let $G=(V,E)$ be a graph, and let $u,v,w\in V$ be distinct vertices.
		Suppose that $u\in N_G(v)\subseteq N_G(w)\cup\{w\}$ and $uw$ is an $\mathcal{R}_d$-bridge in $G$. Then $uv$ is an $\mathcal{R}_d$-bridge in
		$G-w$.
	\end{proposition}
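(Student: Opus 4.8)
The plan is to argue by contradiction, using the characterization recalled just above the definitions, that an edge is an $\mathcal{R}_d$-bridge of a graph precisely when it lies in no $\mathcal{R}_d$-circuit of that graph. First I would record the immediate consequence of the hypotheses that $uw\in E$: since $uv\in E$ we have $u\in N_G(v)\subseteq N_G(w)\cup\{w\}$, and $u\neq w$ forces $u\in N_G(w)$. Thus $uw$ is genuinely an edge of $G$, consistent with the assumption that it is an $\mathcal{R}_d$-bridge.

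Now suppose, for contradiction, that $uv$ is not an $\mathcal{R}_d$-bridge of $G-w$. Then $uv$ lies in some $\mathcal{R}_d$-circuit $C\subseteq E(G-w)$; in particular $v\in V(C)$, while $w\notin V(C)$, and $C$ contains no edge incident to $w$. The idea is to transport $C$ to a circuit of $G$ through $uw$ by \emph{relabelling} the vertex $v$ as $w$. Concretely, I would set $C' = \{e\in C : v\notin e\}\cup\{wx : vx\in C\}$, the edge set obtained from $C$ by redirecting every edge at $v$ to $w$ and leaving all other edges untouched.

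The two things to check are that $C'$ is a subgraph of $G$ and that $C'$ is again an $\mathcal{R}_d$-circuit. For the first, the edges of $C$ not meeting $v$ already lie in $E(G-w)\subseteq E(G)$; and for each $x$ with $vx\in C$ we have $x\in N_G(v)\subseteq N_G(w)\cup\{w\}$ with $x\neq w$ (as $w\notin V(C)$), hence $wx\in E(G)$. Since $C$ contains no $w$-edge and $w\notin V(C)$, the relabelling creates no loop or parallel edge, so $C'$ is a simple subgraph of $G$. For the second, observe that the map fixing $V(C)\setminus\{v\}$ and sending $v\mapsto w$ is a graph isomorphism from $C$ onto $C'$; because the $d$-dimensional generic rigidity matroid is invariant under graph isomorphism (relabelling vertices merely permutes the coordinate blocks of the rigidity matrix, which preserves the rank of every set of edge-rows), $C'$ is an $\mathcal{R}_d$-circuit of $K_{V(C')}$, and hence of $G$ by restriction. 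Finally, $uv\in C$ yields $uw\in C'$, so $uw$ lies in the circuit $C'\subseteq E(G)$, contradicting the hypothesis that $uw$ is an $\mathcal{R}_d$-bridge of $G$.

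The only genuinely delicate point, and the one I would write out with care, is the membership claim $C'\subseteq E(G)$: it is exactly here that both hypotheses $N_G(v)\subseteq N_G(w)\cup\{w\}$ and $w\notin V(C)$ are used, together with the check that no loop or parallel edge is produced. Once that is secured, the isomorphism-invariance of $\mathcal{R}_d$ does all the remaining work, so no explicit rank or equilibrium-stress computation is required.
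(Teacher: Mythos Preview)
Your proof is correct and follows exactly the same approach as the paper's own proof: assume a circuit $C$ in $G-w$ through $uv$, relabel $v$ by $w$ to obtain an isomorphic copy $C'$ inside $G$ (using $N_G(v)\subseteq N_G(w)\cup\{w\}$ to verify $C'\subseteq E(G)$), and observe that $C'$ is then an $\mathcal{R}_d$-circuit containing $uw$, contradicting the bridge hypothesis. You spell out in more detail the membership check and the isomorphism-invariance of $\mathcal{R}_d$, which the paper leaves implicit, but the argument is identical.
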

	\begin{proof}
		Suppose not. Then there exists an ${\cal R}_d$-circuit $C$ in $G-w$
		containing $uv$. Since $N_G(v)\subseteq N_G(w)\cup\{w\}$, there is a
		subgraph $C'$ in $G-v$ which is isomorphic to $C$ and contains $uw$.
		Then $C'$ is an 
		${\cal R}_d$-circuit in $G$ containing $uw$, contradicting the
		assumption that $uw$ is an $\mathcal{R}_d$-bridge in $G$.
	\end{proof}
	
	We refer the reader to \cite{Jmemoirs,SW} for more details concerning
	rigid graphs and the rigidity matroid.
	
	\section{Minimum degree conditions for $d$-rigidity}
	\label{sec:mindegree}
	In this section, we prove Theorems \ref{thm:delta:n/2+d} and \ref{thm:delta:(n+d)/2-1}. 
		To do so, we begin by introducing a new graph parameter, denoted by $\rup_d(G)$, which compares the rank of the cone graph of $G$ to the rank of $G$ in the rigidity matroid. First, we derive some auxiliary statements about $\rup_d(G)$ (Subsection \ref{subsec:rupd}). We then use this parameter to give a purely combinatorial proof of Theorem 
	\ref{thm:delta:n/2+d}
	(Subsection \ref{subsec:proof+d}).
	Next,  we describe the notion of the rank contribution of a vertex (Subsection \ref{subsec:rankcontr}), another key tool in our  arguments, which we apply in the proof of Theorem \ref{thm:delta:(n+d)/2-1} (Subsection \ref{subsec:proof+d/2}).

	\subsection{Properties of $\rup_d(G)$}
	\label{subsec:rupd}

    Let $G=(V,E)$ be a graph. We define
	$$	\rup_d(G) =
	r_d(G^w)-r_d(G).
	$$
	For graphs with $|V|\leq d$, we have $\rup_d(G)=|V|$ by Lemma \ref{ext}. If  $|V|\geq d$, then %
	Theorem \ref{thm:coning}(iii)
	implies that
	\begin{equation}\label{eq:rup:n>d1}
		\rup_d(G)=\dof_d(G)-\dof_d(G^w)+d= \dof_d(G)-\dof_{d-1}(G)+d.
	\end{equation}
	
	\begin{lemma}\label{lem:dof:coning} 
		If $|V|\geq d$, then $\rup_{d+1}(G^w)=\rup_{d}(G)+1$.
	\end{lemma}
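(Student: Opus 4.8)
The plan is to compute both sides of the claimed identity $\rup_{d+1}(G^w)=\rup_d(G)+1$ by reducing them to degrees of freedom via the formula \eqref{eq:rup:n>d1}, and then to use the coning theorem (Theorem \ref{thm:coning}(iii)) to translate degrees of freedom across dimensions. The key observation is that $\rup_{d+1}(G^w)$ involves the graph $G^w$ together with \emph{its} cone, i.e.\ $(G^w)^{w'}=G^{ww'}$, the double cone of $G$, so I must keep careful track of which cone is being taken and in which dimension.

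First I would apply \eqref{eq:rup:n>d1} to the graph $G^w$ in dimension $d+1$. Since $|V(G^w)|=|V|+1\geq d+1$, the hypothesis of \eqref{eq:rup:n>d1} is met (it requires the vertex count to be at least the dimension), giving
\[
\rup_{d+1}(G^w)=\dof_{d+1}(G^w)-\dof_{d}(G^w)+(d+1).
\]
Next I would repeatedly apply Theorem \ref{thm:coning}(iii), which states $\dof_d(H)=\dof_{d+1}(H^w)$ for any graph $H$. Reading this identity in the direction that removes a cone point, I get $\dof_{d+1}(G^w)=\dof_d(G)$ and $\dof_d(G^w)=\dof_{d-1}(G)$. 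Substituting these into the display above yields
\[
\rup_{d+1}(G^w)=\dof_d(G)-\dof_{d-1}(G)+(d+1).
\]

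Finally I would compare this with $\rup_d(G)$. Since $|V|\geq d$, formula \eqref{eq:rup:n>d1} gives $\rup_d(G)=\dof_d(G)-\dof_{d-1}(G)+d$. Subtracting, the two expressions differ exactly by $1$, which is the claimed identity. The main thing to watch is the applicability of \eqref{eq:rup:n>d1} in each dimension: I am invoking it for $G^w$ in dimension $d+1$ (needs $|V(G^w)|\geq d+1$, i.e.\ $|V|\geq d$, which holds by hypothesis) and for $G$ in dimension $d$ (needs $|V|\geq d$, again given). The only genuine subtlety, and the step I would check most carefully, is the second application of Theorem \ref{thm:coning}(iii) in the form $\dof_d(G^w)=\dof_{d-1}(G)$: here I am using coning to pass from dimension $d-1$ up to dimension $d$, identifying $G^w$ as the cone of $G$, and I must confirm the dof-preservation holds without an extra dimension-count caveat when $d-1$ is small. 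Modulo this bookkeeping, the proof is a short chain of substitutions with no combinatorial obstacle.
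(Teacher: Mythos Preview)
Your proof is correct and follows essentially the same route as the paper: both expand $\rup_{d+1}(G^w)$ and $\rup_d(G)$ via \eqref{eq:rup:n>d1} and then invoke Theorem \ref{thm:coning}(iii) to collapse the difference to $1$. The only cosmetic difference is that the paper uses the \emph{first} form of \eqref{eq:rup:n>d1} for $\rup_d(G)$, namely $\rup_d(G)=\dof_d(G)-\dof_d(G^w)+d$, so the $\dof_d(G^w)$ terms cancel directly and a single application of coning (for $\dof_{d+1}(G^w)=\dof_d(G)$) suffices; your version uses the second form and hence applies coning twice, but your ``extra'' application $\dof_d(G^w)=\dof_{d-1}(G)$ is precisely the identity already built into \eqref{eq:rup:n>d1}, so your worry about the $d-1$ case is moot.
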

	\begin{proof} By using the equalities in  \eqref{eq:rup:n>d1},
		we obtain
		\begin{align*}
			\rup_{d+1}(G^w)-\rup_{d}(G)&=(\dof_{d+1}(G^w)-\dof_{d}(G^w)+d+1)-(\dof_d(G)-\dof_d(G^w)+d)\\
			&=\dof_{d+1}(G^w)-\dof_d(G)+1=1,
		\end{align*}
		where the last equality follows from Theorem \ref{thm:coning}(iii).
	\end{proof}

	\begin{lemma}\label{lem:rup:subgraph} 
		Let $H$ and $G$ be graphs on the same vertex set. If $H$ is a subgraph of $G$, then $\rup_d(H)\geq \rup_d(G)$.
	\end{lemma}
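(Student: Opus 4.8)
The plan is to deduce the inequality directly from the submodularity of the matroid rank function, after placing all four relevant graphs inside a single rigidity matroid. Set $V^+ = V\cup\{w\}$ and work in $\mathcal{R}_d(K(V^+))$, whose rank function is $r_d$. Let $C=\{wv : v\in V\}$ be the set of cone edges, so that $E(H^w)=E(H)\cup C$ and $E(G^w)=E(G)\cup C$, while $E(H)\subseteq E(G)$ and both are disjoint from $C$ (every edge of $C$ is incident to $w\notin V$). In this language the claim $\rup_d(H)\geq \rup_d(G)$ becomes
\[
 r_d(E(H)\cup C)-r_d(E(H)) \;\geq\; r_d(E(G)\cup C)-r_d(E(G)).
\]

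First I would apply submodularity of $r_d$ to the pair of edge sets $A=E(H)\cup C$ and $B=E(G)$, giving $r_d(A)+r_d(B)\geq r_d(A\cup B)+r_d(A\cap B)$. The next step is the bookkeeping: since $E(H)\subseteq E(G)$ and $C\cap E(G)=\emptyset$, we have $A\cup B = E(G)\cup C = E(G^w)$ and $A\cap B = E(H)$. Substituting and rearranging the resulting inequality $r_d(E(H^w))+r_d(E(G))\geq r_d(E(G^w))+r_d(E(H))$ yields exactly the displayed inequality, and hence $\rup_d(H)\geq \rup_d(G)$.

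Because the argument uses only that $r_d$ is a matroid rank function, no case distinction on $|V|$ is needed; in particular the degenerate regime $|V|\leq d$, where $\rup_d$ is constant and the inequality holds with equality, is covered automatically. The only genuine decision is the choice of sets in the submodular inequality: the whole proof hinges on pairing the \emph{cone of the smaller graph} $E(H)\cup C$ with the \emph{larger base graph} $E(G)$, precisely so that their intersection collapses to $E(H)$ and their union to $E(G^w)$. I expect no real obstacle beyond spotting this pairing; the remaining manipulations are routine.
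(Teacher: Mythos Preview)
Your argument is correct and is essentially identical to the paper's proof: the paper also applies submodularity of $r_d$ to obtain $r_d(H^w)+r_d(G)\geq r_d(G^w)+r_d(H)$ and rearranges. Your write-up simply makes the choice of sets $A=E(H)\cup C$ and $B=E(G)$ explicit, which is exactly the pairing behind the paper's one-line inequality.
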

	
	\begin{proof}
		By the submodularity of $r_d$, we have $r_d(H^w)+r_d(G)\geq r_d(G^w)+r_d(H)$, which implies $\rup_d(H)\geq \rup_d(G)$.
	\end{proof}

	The next lemma gives
	a lower bound on $\rup_{d'}(G-v)$ when $G$ is an $\mathcal{R}_d$-closed graph and $d\leq d'$.
	
	\begin{lemma}\label{lem:dofup:s1s2s3}
		Let $d\leq d'$ be positive integers, $G=(V,E)$ an $\mathcal{R}_d$-closed graph, and $u,v\in V$. 
		Define
		$s_1=\min(|V-N_G(v)-\{v\}|,d'-d+1)$,
		$s_2=\min({|N_G(v)-N_G(u)-\{u\}|}, {d'-d})$, and
		$s_3=\min(|N_G(u)\cap N_G(v)|,d')$.
		Then 
		\begin{enumerate}[label=(\alph*)]
			\item $\rup_{d'}(G-v)\geq r_{d'}(G)-r_{d'}(G-v)+s_1,$ and
			\item $\rup_{d'}(G-v)\geq s_1+s_2+s_3.$
		\end{enumerate}
	\end{lemma}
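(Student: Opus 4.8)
The statement asks for two lower bounds on $\rup_{d'}(G-v)$, where $G$ is $\mathcal{R}_d$-closed and $d'\geq d$. My overall strategy is to express $\rup_{d'}(G-v)$ via the cone construction and then feed vertices back in one at a time, using the fact that each added edge raises the rank by at most one but, under the closure hypothesis, often raises it by exactly one. The key observation is that $\rup_{d'}(G-v)=r_{d'}((G-v)^w)-r_{d'}(G-v)$ counts precisely the rank gained when we cone $G-v$ by a fresh apex $w$. I would think of $w$ as a ``copy'' of $v$: coning $G-v$ attaches $w$ to \emph{all} of $V-v$, whereas in $G$ the vertex $v$ is attached only to $N_G(v)$. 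So the comparison between $\rup_{d'}(G-v)$ and the behaviour of $v$ inside $G$ is really a comparison between a fully-coned apex and the partially-connected vertex $v$.

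\textbf{Proving (a).}
For part (a), the plan is to build up $(G-v)^w$ from $G-v$ by first adding back the edges from $w$ to $N_G(v)$, recovering (an isomorphic copy of) $G$, and then adding the remaining edges from $w$ to the $|V-N_G(v)-\{v\}|$ non-neighbours of $v$. The first batch of additions contributes exactly $r_{d'}(G)-r_{d'}(G-v)$ to the rank, since those edges rebuild $v$'s incidences. For the second batch, I would argue that each of the first $s_1=\min(|V-N_G(v)-\{v\}|,\,d'-d+1)$ such edges is independent, i.e.\ raises the rank by one. This is where the $\mathcal{R}_d$-closedness of $G$ enters: because $G$ is $\mathcal{R}_d$-closed, $v$ together with each non-neighbour forms an $\mathcal{R}_d$-loose pair, and by Lemma \ref{lem:dim:addedges} looseness is preserved when we move from dimension $d$ up to dimension $d'=d+|F|$ by adding $|F|$ extra edges; this lets me certify that up to $d'-d+1$ of these apex edges stay independent. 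Summing the two contributions gives the bound in (a).

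\textbf{Proving (b).}
For part (b), I would instead track three disjoint groups of edges incident to the apex $w$ in $(G-v)^w$, corresponding to the three quantities $s_3$ (common neighbours of $u,v$), $s_2$ (neighbours of $v$ that are not neighbours of $u$, nor $u$ itself), and $s_1$ (non-neighbours of $v$). The aim is to exhibit $s_1+s_2+s_3$ edges from $w$ whose addition each increases the rank. For the $s_3$ edges to common neighbours I expect a $0$-extension style argument (cf.\ Lemma \ref{ext}) showing that attaching $w$ to up to $d'$ vertices keeps everything independent; for the $s_2$ and $s_1$ groups I would again invoke $\mathcal{R}_d$-closedness together with Lemma \ref{lem:dim:addedges} to certify independence as the dimension is raised, with the caps $d'-d$ and $d'-d+1$ arising exactly from how many extra dimensions are available for the loose pairs involving $u$ and the non-neighbours respectively. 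The delicate point is to choose an ordering of these additions so that the independence of each new edge is not destroyed by the previously added ones.

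\textbf{Main obstacle.}
The part I expect to be hardest is the bookkeeping in (b) that guarantees the three contributions $s_1,s_2,s_3$ are genuinely additive rather than overlapping. Each group relies on a different independence certificate (a $0$-extension argument for the common neighbours, and looseness-preservation under dimension increase for the other two), and I must ensure the caps $d', d'-d, d'-d+1$ combine correctly so that I never ``spend'' the same extra dimension twice. Getting a consistent ordering of the edge additions, and checking that Lemma \ref{lem:dim:addedges} can be applied with the right value of $|F|$ at each stage, is the crux; once that is set up, parts (a) and (b) should follow by collecting the per-edge rank increments.
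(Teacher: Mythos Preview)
Your plan for part (a) is correct and essentially identical to the paper's argument: identify the cone apex $w$ with $v$, first attach $w$ to $N_G(v)$ to recover a copy of $G$, then add edges to $s_1$ non-neighbours of $v$, each of which is an $\mathcal{R}_{d'}$-bridge by $\mathcal{R}_d$-closedness and Lemma~\ref{lem:dim:addedges}.

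For part (b), your three-group decomposition is also the paper's structure (the paper simply packages it as ``use (a), then show $r_{d'}(G)-r_{d'}(G-v)\geq s_2+s_3$'', which is precisely your $s_3$ and $s_2$ stages). The $s_3$ step via $0$-extension and the $s_1$ step via looseness of non-neighbours of $v$ are fine. The genuine gap is in the $s_2$ step. You plan to ``invoke $\mathcal{R}_d$-closedness together with Lemma~\ref{lem:dim:addedges}'' for the loose pairs $\{u,x\}$ with $x\in N_G(v)-N_G(u)-\{u\}$, but Lemma~\ref{lem:dim:addedges} only certifies that the edges $ux$ become bridges; it says nothing about the edges $wx$ you actually need to add to the cone. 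Since each such $x$ is a \emph{neighbour} of $v$, the pair $\{v,x\}$ is not a non-edge of $G$, and $\mathcal{R}_d$-closedness gives you no direct handle on $wx$.

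The missing ingredient is a bridge-transfer step (Proposition~\ref{prop:easy:bridges}). The paper first restricts $v$'s neighbourhood to $X_2\cup X_3$ (obtaining a subgraph $G_1\subseteq G$), then adds auxiliary edges $F=\{ux:x\in X_2\}$, so that in $G_1+F$ every neighbour of $v$ is also a neighbour of $u$; this is exactly why $X_3$ must sit inside $N_G(u)\cap N_G(v)$, not merely in $N_G(v)$. Proposition~\ref{prop:easy:bridges} then transfers the $\mathcal{R}_{d'-1}$-bridge status of $ux$ in $G_1+F$ to that of $vx$ in $G_1-u$, and a final cone over $u$ lifts this to an $\mathcal{R}_{d'}$-bridge in $G_1$. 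This swap-and-cone manoeuvre is the heart of the $s_2$ contribution and is not a consequence of Lemma~\ref{lem:dim:addedges} alone. Your ``main obstacle'' paragraph misdiagnoses the difficulty as ordering and dimension bookkeeping; those are routine once the bridge transfer is in hand.
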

	
	\begin{proof} 
		(a)
		Let $X_1$ denote a subset of  $V-N_G(v)-\{v\}$ of size $s_1$, and let $G^+$ denote the graph obtained from $G$ by adding the edges $xv$ for all $x\in  X_1$. Since $G$ is $\mathcal{R}_d$-closed, $\{x,v\}$ is ${\cal R}_d$-loose in $G$
		for each $x\in X_1$, and hence each new edge $xv$ is an $\mathcal{R}_{d'}$-bridge in $G^+$ by Lemma \ref{lem:dim:addedges}.
		Thus $$r_{d'}(G)+s_1= r_{d'}(G^+) \leq r_{d'}((G-v)^w)= \rup_{d'}(G-v)+r_{d'}(G-v) .$$
		
		(b) By part (a), it is sufficient to prove that $r_{d'}(G)\geq r_{d'}(G-v)+s_2+s_3$.
		Let $X_2$ (resp.\ $X_3$) denote a subset of $N_G(v)-N_G(u)-\{u\}$  (resp.\ $N_G(u)\cap N_G(v)$) of size $s_2$ (resp.\ $s_3$).
		Let $G_0$ (resp.\ $G_1$) denote the graph obtained from $G$ by first deleting all the edges incident with $v$ and then joining $v$ to the vertices of $X_3$ (resp.\ $X_2\cup X_3$). Then $r_{d'}(G_0)-r_{d'}(G-v)=s_3$ by Lemma \ref{ext}.
		Define $F=\{ux:x\in X_2\}$. By Lemma \ref{lem:dim:addedges}, every $ux\in F$ is an $\mathcal{R}_{d'-1}$-bridge in ${G_1}+F$. Note that each neighbour of $v$ is a
		neighbour of $u$ in $G_1+F$.
		Thus, by Proposition \ref{prop:easy:bridges}, for every $x\in X_2$, the edge $vx$ is an $\mathcal{R}_{d'-1}$-bridge in $G_1-u$, and hence an $\mathcal{R}_{d'}$-bridge in $G_1$. Therefore, $r_{d'}(G_1)-r_{d'}(G_0)=s_2$. 
		It follows that $r_{d'}(G)\geq r_{d'}(G_1)= r_{d'}(G_0)+s_2= r_{d'}(G-v)+s_2+s_3,$ as required.
	\end{proof}
	
	To conclude this subsection, we derive an upper bound on $\rup_d(G)$ expressed in terms of the minimum degree of $G$.
	
	\begin{lemma}\label{lem:dof:easybound:v2}
		Let $G=(V,E)$ be a graph on $n\geq d$ vertices with minimum degree $\delta$.
		Then %
		\begin{equation}\label{eq:dofup:delta}
			\rup_d(G)(\delta -d+2)\leq d(n-d+1).
		\end{equation}
		In particular, if $\delta\geq \frac{n+d-2}{2}$, then $\rup_d(G)< 2d$.
	\end{lemma}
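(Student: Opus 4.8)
The plan is to translate the statement into a lower bound on the rank gap $r_d(G)-r_{d-1}(G)$ and then to prove that gap bound by a counting argument on a basis of the cone. First I would record the boundary case $n=d$ separately: here $\rup_d(G)=|V|=d$, while $\delta\le n-1=d-1$ forces $\delta-d+2\le 1$, so the left-hand side of \eqref{eq:dofup:delta} is at most $d=d(n-d+1)$ and the inequality holds. For $n\ge d+1$ I would use coning. Since $\rup_d(G)=r_d(G^w)-r_d(G)$ is exactly the rank contribution of the cone vertex $w$ (as $G^w-w=G$), and Theorem \ref{thm:coning}(iii) (equivalently \eqref{eq:rup:n>d1}) gives $r_d(G^w)=r_{d-1}(G)+n$, we obtain $\rup_d(G)=n-\bigl(r_d(G)-r_{d-1}(G)\bigr)$. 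Thus \eqref{eq:dofup:delta} is equivalent to the lower bound $r_d(G)-r_{d-1}(G)\ge n-\frac{d(n-d+1)}{\delta-d+2}$, and the ``in particular'' clause then follows immediately: if $\delta\ge\frac{n+d-2}{2}$ then $\delta-d+2\ge\frac{n-d+2}{2}$, whence $\rup_d(G)\le\frac{2d(n-d+1)}{n-d+2}<2d$.

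Next I would set $t:=\rup_d(G)$ and fix a basis $B$ of $\mathcal{R}_d(G^w)$ that contains a basis $B_0$ of $\mathcal{R}_d(G)$; the remaining $t$ elements of $B$ are cone edges $\{ws:s\in S\}$ for some $S\subseteq V$ with $|S|=t$. The goal is the incidence inequality
\[
\sum_{s\in S}\bigl(\deg_G(s)-d+2\bigr)\ \le\ d(n-d+1).
\]
Granting this, the Lemma follows: each $\deg_G(s)\ge\delta$, so the left-hand side is at least $t(\delta-d+2)$, giving $t(\delta-d+2)\le d(n-d+1)$ (and when $\delta<d-1$ the claimed inequality is trivial because its left-hand side is nonpositive). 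The intuition behind the incidence inequality is that the independence of the cone edges forces $S$ to avoid densely connected vertices: a vertex $s$ of large degree in $G$ tends to make $ws$ redundant over $B_0$, so it does not lie in $S$. The extremal configurations confirm the shape of the bound — for the complete graph $t=d$, every vertex has degree $n-1$, and both sides equal $d(n-d+1)$; and for $d=1$ the inequality reduces to the statement that the components of $G$, of which there are $\rup_1(G)$, each contain at least $\delta+1$ vertices.

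To prove the incidence inequality I would exploit that $B$ is $\mathcal{R}_d$-independent in $G^w$, so it satisfies the Maxwell sparsity count: every subset of $B$ spanning a set $W$ of at least $d$ vertices has at most $d|W|-\binom{d+1}{2}$ edges. Applying this to the edges of $B$ inside $S\cup\{w\}$ together with the $B_0$-edges incident to $S$, and carefully bookkeeping the three types of edges involved — those inside $S$, those from $S$ to $V\setminus S$, and the $t$ cone edges at $w$ — should convert the rank deficit measured by $t$ into the degree sum $\sum_{s\in S}\deg_G(s)$. I expect the main obstacle to lie exactly here: turning the \emph{global} independence of $B$ into the \emph{local} degree statement with the precise constant $d(n-d+1)$, since one must control the overlap between the edges counted at $S$ and the cone edges, and the naive Maxwell count is not by itself tight. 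A more robust fallback, should the direct count prove delicate, is the linear-algebraic description of the deficiency: coning to all of $V$ produces $n-t$ independent dependencies among the edges $\{wv:v\in V\}$ over $B_0$, each corresponding to a vector $(\mu_v(p_v-p_w))_{v\in V}$ lying in the row space of the rigidity matrix $R(G,p)$; the minimum-degree hypothesis then guarantees enough such dependencies to bound $t$ from above, yielding the same estimate.
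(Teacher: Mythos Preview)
Your setup is sound: the set $S$ you extract from a basis of $\mathcal{R}_d(G^w)$ is exactly the paper's set $U$, and your target inequality $\sum_{s\in S}(\deg_G(s)-d+2)\le d(n-d+1)$ is the right one. The gap is precisely where you suspect it. Applying Maxwell sparsity to the basis $B$ can only bound the number of \emph{basis} edges incident to $S$, i.e.\ $\sum_{s\in S}\deg_{B_0}(s)$, not $\sum_{s\in S}\deg_G(s)$. Since $B_0$ is merely a basis of $\mathcal{R}_d(G)$, many edges of $G$ touching $S$ may lie outside $B_0$, and the sparsity of $B$ says nothing about them. Your linear-algebraic fallback is too vague to close this gap.

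The paper's proof supplies the missing idea: it shows that the \emph{entire} subgraph $H$ of $G$ consisting of all edges with at least one endpoint in $U$ is $\mathcal{R}_d$-independent. This is not a sparsity count but a genuine matroidal statement, proved in two pieces. First, since every cone edge $uw$ is an $\mathcal{R}_d$-bridge in $G^{w,U}$, it is also a bridge in the subgraph $(G[U])^w$, and Theorem~\ref{thm:coning}(i) then forces $G[U]$ to be $\mathcal{R}_{d-1}$-independent; in particular $i_G(U)\le (d-1)|U|-\binom{d}{2}$. Second, for each $v\in V\setminus U$ one has $N_H(v)\subseteq U=N_{H^{w,U}}(w)$, so Proposition~\ref{prop:easy:bridges} transfers the bridge property of $uw$ to every edge $uv\in E(H)$; thus $H$ is built from $G[U]$ by adding vertices and $\mathcal{R}_d$-bridges, hence is $\mathcal{R}_d$-independent. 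Now $|E(H)|\le r_d(G)\le d(n+1)-\binom{d+1}{2}-\rup_d(G)$, while $\sum_{u\in U}\deg_G(u)=\sum_{u\in U}\deg_H(u)=|E(H)|+i_G(U)$; combining these two bounds with $\delta|U|\le\sum_{u\in U}\deg_G(u)$ yields \eqref{eq:dofup:delta}. The crucial point you were missing is that independence of the cone edges over \emph{all} of $G$ (not just over $B_0$) propagates, via coning and bridge arguments, to independence of the full star $H$.
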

	\begin{proof}
		It follows from the definition of $\rup_d$ that
		there is a set $U\subseteq V$ of vertices such that $|U|= \rup_d(G)\geq d$ and $$r_d(G^{w,U})=r_d(G)+|U|.$$ 
		Let $H=(V,F)$ be the subgraph of $G$ defined by $F=\big\{uv\in E: \{u,v\}\cap U\neq \emptyset\big\}$.
		For each $u\in U$, the edge $uw$ is an $\mathcal{R}_d$-bridge in $G^{w,U}$, and hence also in its subgraph $(H[U])^w$. 
		Thus, by Theorem \ref{thm:coning}, the graph $H[U]$ is $\mathcal{R}_{d-1}$-independent. 
		This implies that
		$|F(U)|\leq (d-1)|U|-\binom{d}{2}.$ 
		Moreover, by Proposition \ref{prop:easy:bridges}, for each $v\in V-U$, every edge $e\in F$ incident with $v$ is an $\mathcal{R}_d$-bridge in $H$. 
		Since $H$ is obtained from $H[U]$ by adding new vertices and $\mathcal{R}_d$-bridges, it follows that $H$ is  $\mathcal{R}_d$-independent. 
		Thus, using (\ref{eq:rup:n>d1}) we obtain 
		$$|F|\leq dn-\binom{d+1}{2}-\dof_d(G)\leq d(n+1)-\binom{d+1}{2}-\rup_d(G).$$ 
		Note that $\deg_H(u)=\deg_G(u)$ for all $u\in U$.
		It follows that $$ \delta |U|-\Big((d-1)|U|-\binom{d}{2}\Big)\leq \sum_{u\in U} \deg_H(u)- |F(U)|=|F|< d(n+1)-\binom{d+1}{2}-\rup_d(G).$$
		By rearranging the inequality and using $|U|=\rup_d(G)$, we obtain \eqref{eq:dofup:delta}. 
		If $\delta\geq \frac{n+d-2}{2}$, then $\rup_d(G)< 2d$ follows from \eqref{eq:dofup:delta}.
	\end{proof}

	\subsection{Proof of Theorem \ref{thm:delta:n/2+d}}	\label{subsec:proof+d}

		We are ready to prove Theorem \ref{thm:delta:n/2+d}, which states that every graph $G$ with minimum degree 
	\begin{equation}
		\label{alln}
		\delta(G)\geq \frac{n}{2}+d-1.
	\end{equation}
	is rigid in $\R^d$.
	
	\begin{proof}[Proof of Theorem \ref{thm:delta:n/2+d}]
		The proof is by induction on $d$. The case $d=1$ follows from Lemma \ref{4conn}. 
		Let $d\geq 2$ and suppose, for a contradiction, that there exists a
		counter-example, that is, a 
		non-rigid graph $G=(V,E)$ that satisfies (\ref{alln}).
		We may assume that $|V|$ is as small as possible, and subject to this,
		$|E|$ is as large as possible.
		Since $G$ satisfies (\ref{alln}) and $G$ is not complete, we have
		$n\geq 2d+2$. Furthermore, the maximality of $|E|$ implies that $G$ is $\mathcal{R}_d$-closed.
		
		If $\delta(G)> \frac{n}{2}+d-1$, then $G'=G-v$ also satisfies (\ref{alln}) for
		any $v\in V$.
		Moreover, $G'$ is not rigid by Lemma \ref{ext}, contradicting the choice
		of $G$. Hence $n$ is even, and there exists a vertex
		$u\in V$ with $$\deg_G(u)=\frac{n}{2}+d-1.$$ 
		Let $X=V-N_G(u)-\{u\}$ be the set of vertices not adjacent to $u$.
		We have $|X|=\frac{n}{2}-d$. 
		  Since $\sum_{v\in V}|N_G(v)\cap X|=\sum_{x\in X}\deg_G(x)> \frac{n}{2}|X|$, it follows that there exists a vertex $v\in V$ such that $|N_G(v)\cap X|\geq \frac{|X|+1}{2}$.
		Let
		$U=N_G(u)\cup N_G(v)-\{u,v\}$.
		Then 
		\begin{equation}\label{eq:VminusU}
			|V-U|\leq |X-N_G(v)|+2\leq \frac{|X|+3}{2} =\frac{n}{4} - \frac{d-3}{2}.    
		\end{equation}

		\begin{claim}\label{claim:2dproof}
			$|V-U|\geq d+2.$
		\end{claim}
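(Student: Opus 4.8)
The plan is to argue by contradiction: assume $|V-U|\le d+1$ and show that $G$ is $d$-rigid, contradicting the choice of $G$ as a non-rigid graph. First I would unravel $V-U$. Since $U=N(u)\cup N(v)-\{u,v\}$, a vertex lies in $V-U$ exactly when it equals $u$, equals $v$, or is adjacent to neither $u$ nor $v$; hence $V-U=\{u,v\}\cup A$, where $A$ is the set of common non-neighbours of $u$ and $v$, and the assumption reads $|A|\le d-1$. Because $\delta(G)\ge \tfrac n2+d-1$ and $\tfrac n2\ge d+1$, every $a\in A$ has at least $\tfrac n2+d-|A|\ge d+2$ neighbours outside $A$. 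Consequently, once $G-A$ is shown to be $d$-rigid, the vertices of $A$ can be reinserted one at a time, each with at least $d$ neighbours in the current (rigid) graph, so that $G$ is $d$-rigid by repeated $0$-extensions (Lemma \ref{ext}). Thus it suffices to prove that $G-A$ is $d$-rigid.

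In $G-A$ the pair $\{u,v\}$ dominates every other vertex, and since $N(u)\cup N(v)$ avoids $A$, a direct count gives $|N(u)\cap N(v)|\ge 2\bigl(\tfrac n2+d-1\bigr)-(n-|A|)=2d-2+|A|\ge d$ (using $d\ge2$). I would therefore contract $\{u,v\}$ to a single vertex $z$. In the resulting graph $H$ the vertex $z$ is adjacent to all of $W:=V\setminus(A\cup\{u,v\})$, so $H$ is precisely the cone $G[W]^{z}$. By the coning theorem (Theorem \ref{thm:coning}(i)), $H$ is $d$-rigid if and only if $G[W]$ is $(d-1)$-rigid, and by Whiteley's vertex/spider splitting theorem (which applies since $|N(u)\cap N(v)|\ge d-1$ when $uv\in E$, resp.\ $\ge d$ when $uv\notin E$) the $d$-rigidity of $H$ implies that of $G-A$. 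This reduces the whole claim to a single statement one dimension lower: \emph{$G[W]$ is $(d-1)$-rigid}. When $A=\varnothing$ — i.e.\ $|V-U|=2$, the extremal situation in which $u$ and $v$ jointly dominate everything — the induced minimum degree on $W$ is large enough to invoke the inductive hypothesis (Theorem \ref{thm:delta:n/2+d} in dimension $d-1$), and the argument closes.

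The hard part will be the case $A\ne\varnothing$. Deleting the $|A|$ common non-neighbours can cost a vertex of $W$ up to $|A|$ of its neighbours, whereas the threshold needed to apply the $(d-1)$-dimensional hypothesis to $G[W]$ drops by only $|A|/2$; concretely one finds $\delta(G[W])\ge \tfrac{|W|}2+d-2-\tfrac{|A|}2$, a deficit of $\lceil |A|/2\rceil$. Overcoming this is where the real work lies, and I expect it to dominate the proof. I would try to recover the missing degree from the slack the same hypothesis creates elsewhere: the common neighbourhood grows with $|A|$ (the count above gives $|N(u)\cap N(v)|\ge 2d-2+|A|$), and the vertices of $W$ that lose the most in passing to $G[W]$ are exactly those heavily joined to $A$, which forces extra edges inside $W$. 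Failing a direct degree fix, I would bound ranks rather than degrees — feeding the large common neighbourhood into the estimates of Lemmas \ref{lem:dofup:s1s2s3} and \ref{lem:dof:easybound:v2}, or exploiting that an $\mathcal R_d$-closed non-rigid graph is a union of cliques meeting pairwise in at most $d-1$ vertices — to certify the $(d-1)$-rigidity of $G[W]$ without routing through its minimum degree.
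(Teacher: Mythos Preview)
Your opening moves coincide with the paper's: contract $\{u,v\}$ to a vertex $w$, use Whiteley's splitting to conclude the contracted graph $G'$ is not $d$-rigid, and use the inductive hypothesis one dimension lower. The divergence is that you first delete $A$, aiming to show $G[W]$ is $(d-1)$-rigid via its minimum degree; as you yourself compute, this fails by a margin of $\lceil|A|/2\rceil$ whenever $A\neq\varnothing$, and your proposal stops at speculative repairs. This is a genuine gap: neither the ``extra edges forced inside $W$'' heuristic nor the clique structure of an $\mathcal R_d$-closed graph gives a clean recovery of the missing degree, and you have not indicated how the rank lemmas would be applied to $G[W]$.

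The paper sidesteps this entirely by \emph{not} deleting $A$. With $H:=G-u-v$ (so $|V(H)|=n-2$ and $\delta(H)\ge\tfrac{|V(H)|}{2}+d-2$), induction gives $(d-1)$-rigidity of $H$ directly --- the lost degree is exactly balanced. Coning then makes $H^w$ rigid in $\mathbb R^d$, but $H^w\neq G'$: they differ by the $t-2$ edges from $w$ to $A$. Since $G'$ is not rigid, some such edge $zw$ is $\mathcal R_d$-loose in $G'$. Now the key step you are missing: apply Lemma~\ref{lem:dim:addedges} to absorb the other $t-3$ added edges by raising the dimension, so $zw$ becomes an $\mathcal R_{d+t-3}$-bridge in $H^w$. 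Via Proposition~\ref{prop:easy:bridges} this forces every edge at $z$ to be an $\mathcal R_{d+t-3}$-bridge in $H$, yielding $\rup_{d+t-2}(H)\ge \tfrac n2+d$. This lower bound then contradicts the upper bound of Lemma~\ref{lem:dof:easybound:v2} applied to $H$ in dimension $d+t-2$, once combined with \eqref{eq:VminusU}. The moral: the discrepancy between $G'$ and $H^w$ is not a nuisance to be removed by deleting $A$, but the source of the contradiction, provided you trade those $t-2$ extra edges for $t-2$ extra dimensions.
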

		\begin{proof}
			Consider the graph $G'$
			obtained from $G$ by contracting the vertex pair $u,v$ into a new vertex $w$.
			Note that $u$ and $v$ have at least $2d-2$ common neighbours by (\ref{alln}).
			Thus, depending on whether $u$ and $v$ are adjacent or not, $G$ can be obtained from $G'$ by a $d$-dimensional vertex splitting or spider splitting
			operation. Therefore $G'$ is not rigid in $\R^d$ by Whiteley's vertex splitting theorem.
			Moreover, we have $\delta(H)\geq \frac{|V(H)|}{2}+d-2$, where
			$H=G'-w$.
			Thus $H$ is rigid in $\R^{d-1}$ by induction.
			By Theorem \ref{thm:coning}(i) $H^w$ is rigid in $\R^{d}$.
			As $H^w$ is obtained from $G'$ by adding edges incident with $w$, it follows that		
			there is a vertex $z\in V-\{u,v\}$ such that $\{z,w\}$ is $\mathcal{R}_d$-loose in $G'$.

			Let $t=|V-U|$ and 
			suppose, for a contradiction, that $t\leq d+1$.
			Then $H^w$ can be obtained from $G'$ by adding $t-2$ new edges, one of which is $zw$. Thus, by Lemma \ref{lem:dim:addedges}, $zw$ is an $\mathcal{R}_{d+t-3}$-bridge in $H^w$. By Proposition  \ref{prop:easy:bridges}, each edge incident with $z$ is an $\mathcal{R}_{d+t-3}$-bridge in $H$. 
			Hence, $ r_{d+t-3}(H)-r_{d+t-3}(H-z)= \deg_H(z)\geq \frac{n}{2}+d-1.$
			 By Lemmas \ref{lem:dof:coning} and \ref{lem:rup:subgraph}, it follows that $$\rup_{d+t-2}(H)\geq \rup_{d+t-2}((H-z)^z)=  \rup_{d+t-3}(H-z)+1\geq \frac{n}{2}+d.$$
			Let $k=\rup_{d+t-2}(H)$.
			By applying Lemma \ref{lem:dof:easybound:v2} to the graph $H$,
			minimum degree $\frac{n}{2}+d-3$ and dimension $d+t-2$,
			we have 
			$$k\Big(\Big(\frac{n}{2}+d-3\Big)-(d+t-2)+2\Big)\leq (d+t-2)\big((n-2)-(d+t-2)+1\big).$$
			Simplifying terms and using $ \frac{n}{2}+d\leq k$, we obtain
			$$\frac{n}{2}+d\leq (d+t-2)\frac{n-d-t+1}{\frac{n}{2}-t+1}\leq 2(d+t-2).$$
			This implies $\frac{n}{2}-(d-4)\leq 2t $, which contradicts \eqref{eq:VminusU}.
		\end{proof}	 
		
		By (\ref{eq:VminusU}) and Claim \ref{claim:2dproof}, we have $\frac{3n}{4}+\frac{d-3}{2}\leq |U|\leq n-d-2$, which implies $6d+2\leq n$. 
		Hence, $$|N_G(v)-N_G(u)-\{u\}|=|N_G(v)\cap X|\geq \frac{|X|+1}{2} = \frac{\frac{n}{2}-d+1}{2}\geq d.$$
		Since $\delta(G)\geq \frac{n}{2}+d-1$ and $N_G(u)\cup N_G(v)\neq V$, we have $|N_G(u)\cap N_G(v)|\geq 2d-1$. From Claim \ref{claim:2dproof} it also follows that $|V-N_G(v)-\{v\}|\geq d$. 
		Thus, applying Lemma \ref{lem:dofup:s1s2s3}(b) with $d'=2d-1$, $s_1=d$, $s_2=d-1$, $s_3=2d-1$ gives $\rup_{d'}(G-v)\geq 4d-2=2d'$. %
		Since $\delta(G-v)\geq  \frac{(n-1)+d'-2}{2}$, this contradicts Lemma \ref{lem:dof:easybound:v2}, which completes the proof.
	\end{proof}

	\subsection{Expected rank contribution of vertices in rigidity matroids}
	\label{subsec:rankcontr}
	
	In the subsequent proofs, we will frequently employ a probabilistic argument introduced in \cite{vill} and further developed in \cite{JJV}. Our exposition follows the approach of \cite{JJV}.

	Let $d$ be a positive integer, $G=(V,E)$ be a graph and $\pi$ be a uniformly random ordering of $V$.
	For a  vertex $v\in V$, let $T_v^\pi$ denote the set of those vertices that precede $v$ in $\pi$.
	We define the {\it rank contribution of $v$ in $\cR_d(G)$}, with respect to $\pi$,  to be $$\rc_d(G, v, \pi) = r_d(G[T_v^\pi\cup \{v\}])-r_d(G[T_v^\pi]),$$
	and the {\it rank contribution of $v$ in $\cR_d(G)$} to be
	
	$$\rc_d(G,v)= \E\big(\rc_d(G,v,\pi)\big).$$
    As a consequence of the linearity of expectation, one obtains the following lemma.
	
	\begin{lemma}
		\cite{JJV}
		\label{lemma:rc_sum} For any graph $G=(V,E)$, we have ${\displaystyle r_d(G)=\sum_{v\in V} \rc_d(G,v)}.$
	\end{lemma}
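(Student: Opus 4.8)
The plan is to exploit the fact that, although each individual rank contribution $\rc_d(G,v,\pi)$ depends on the random ordering $\pi$, the sum of these contributions over all vertices is \emph{deterministic} and equals $r_d(G)$ for \emph{every} fixed $\pi$. Once this pointwise identity is established, linearity of expectation immediately converts it into the claimed identity for the expected contributions. In this sense the statement is, as the surrounding text indicates, a direct consequence of linearity of expectation, and the only thing to verify carefully is that the per-ordering sum telescopes.

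First I would fix an arbitrary ordering $\pi$ and enumerate the vertices as $v_1,v_2,\dots,v_n$ in the order prescribed by $\pi$, so that $T_{v_i}^\pi=\{v_1,\dots,v_{i-1}\}$ for each $i$. With this notation,
$$\rc_d(G,v_i,\pi)=r_d\big(G[\{v_1,\dots,v_i\}]\big)-r_d\big(G[\{v_1,\dots,v_{i-1}\}]\big).$$
Summing over $i=1,\dots,n$, the right-hand side telescopes, and using $r_d(G[\emptyset])=0$ together with $G[\{v_1,\dots,v_n\}]=G$, I obtain
$$\sum_{v\in V}\rc_d(G,v,\pi)=r_d\big(G[\{v_1,\dots,v_n\}]\big)-r_d\big(G[\emptyset]\big)=r_d(G).$$
The key point is that this equality holds for \emph{every} ordering $\pi$, and the resulting value $r_d(G)$ does not depend on $\pi$.

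Finally, I would take the expectation over the uniformly random ordering $\pi$ and apply linearity of expectation to get
$$r_d(G)=\E\Big(\sum_{v\in V}\rc_d(G,v,\pi)\Big)=\sum_{v\in V}\E\big(\rc_d(G,v,\pi)\big)=\sum_{v\in V}\rc_d(G,v),$$
where the first equality uses that the summand is the constant $r_d(G)$ and the last uses the definition of $\rc_d(G,v)$. This is exactly the desired identity. There is no genuine obstacle in this argument; the only step requiring a moment's attention is recognizing that the per-ordering sum is a telescoping sum collapsing to the $\pi$-independent quantity $r_d(G)$, after which linearity of expectation finishes the proof.
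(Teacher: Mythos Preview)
Your proof is correct and follows exactly the approach indicated in the paper: the paper does not spell out a detailed argument but simply notes that the lemma is ``a consequence of the linearity of expectation,'' which is precisely the telescoping-plus-linearity argument you give.
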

	
	    	We next introduce a lower bound of $\rc_d(G,v)$.
	Let $E_v$ denote the set of edges of $G$ incident with $v$. Put $N_v^\pi=T_v^\pi\cap N_G(v)$ and
	$E_v^\pi=\{vu\in E_v:u\in N_v^\pi\}$.
	We define 
	$$\rc^*_d(G, v, \pi) = r_d(G-E_v+E_v^\pi)-r_d(G-v)$$
	and 
	$$\rc^*_d(G,v)= \E\big(\rc^*_d(G,v,\pi)\big).$$
	   The submodularity of $r_d$ implies that $\rc^*_d$ is a lower bound for $\rc_d$:
	
	\begin{lemma} \cite{JJV}
		\label{lemma:rc_rcstar} For any graph $G=(V,E)$ and $v\in V$, we have
		$\rc^*_d(G,v)\leq \rc_d(G,v).$ 
	\end{lemma}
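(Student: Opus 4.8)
The plan is to prove that $\rc^*_d(G,v) \le \rc_d(G,v)$ by establishing the pointwise inequality $\rc^*_d(G,v,\pi) \le \rc_d(G,v,\pi)$ for every fixed ordering $\pi$, and then take expectations. Once the inequality holds for each $\pi$, the claimed inequality on the averages follows immediately by monotonicity of expectation, so all the real content is in the deterministic comparison.

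Fix an ordering $\pi$ and abbreviate $T = T_v^\pi$, $N = N_v^\pi = T \cap N_G(v)$, and $E_v^\pi = \{vu : u \in N\}$. The two quantities to compare are
\begin{align*}
\rc_d(G,v,\pi) &= r_d(G[T \cup \{v\}]) - r_d(G[T]),\\
\rc^*_d(G,v,\pi) &= r_d(G - E_v + E_v^\pi) - r_d(G - v).
\end{align*}
The key observation is that $G - E_v + E_v^\pi$ is exactly the graph on $V$ in which $v$ retains only those incident edges going to vertices of $N$ (its neighbours that precede it), while $G - v$ is the same graph with $v$ (and hence all of $E_v^\pi$) deleted. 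So the right-hand difference measures the rank contributed by adding the vertex $v$ together with the edge set $E_v^\pi$ to the graph $G - v$. The left-hand difference measures the rank contributed by adding $v$ together with $E_v^\pi$ to the \emph{induced} subgraph $G[T]$, which is a subgraph of $G - v$ on the smaller vertex set $T$.

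The natural tool is the submodularity of the rank function $r_d$ of a matroid, applied to the edge sets. Consider the edge set $A = E(G[T])$ (the edges of $G-v$ among vertices of $T$) and the edge set $B = E(G - v)$ (all edges of $G-v$); we have $A \subseteq B$. Adding the \emph{same} increment $E_v^\pi$ to both, submodularity of the matroid rank (equivalently, the fact that the rank increment from adding a fixed set is a nonincreasing function of the base set, which is precisely submodularity) gives
\begin{equation*}
r_d(A \cup E_v^\pi) - r_d(A) \ge r_d(B \cup E_v^\pi) - r_d(B).
\end{equation*}
The left side is $r_d(G[T \cup \{v\}]) - r_d(G[T]) = \rc_d(G,v,\pi)$, since $A \cup E_v^\pi$ is exactly the edge set of $G[T \cup \{v\}]$; the right side is $r_d(G - E_v + E_v^\pi) - r_d(G-v) = \rc^*_d(G,v,\pi)$. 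This yields the pointwise inequality, and averaging over $\pi$ completes the proof.

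The main subtlety — really the only place where care is needed — is bookkeeping the edge sets correctly so that the submodular ``diminishing returns'' inequality is applied in the right direction, i.e.\ confirming that $A \cup E_v^\pi$ and $B \cup E_v^\pi$ really correspond to the graphs appearing in $\rc_d$ and $\rc^*_d$ respectively. In particular one must check that the isolated vertices of $V \setminus (T \cup \{v\})$ present in $B$ but absent in $A$ do not contribute any rank, which is immediate since isolated vertices carry no edges and $r_d$ is a function of the edge set alone. No genuine obstacle arises beyond this indexing; the result is a direct consequence of matroid submodularity.
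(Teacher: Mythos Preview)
Your proof is correct and follows exactly the approach the paper indicates: the paper does not spell out a proof but simply states that ``the submodularity of $r_d$ implies that $\rc^*_d$ is a lower bound for $\rc_d$'' and cites \cite{JJV}. Your pointwise application of the diminishing-returns form of matroid submodularity to the nested edge sets $E(G[T_v^\pi]) \subseteq E(G-v)$ with common increment $E_v^\pi$ is precisely the intended argument.
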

	
	   In what follows, we give another equivalent form of $\rc^*_d(G,v)$. Let $\mathcal{R}_d^v(G)$ denote the matroid obtained from $\mathcal{R}_d(G)$ by contracting all edges not incident with $v$; that is,
	$$\mathcal{R}_d^v(G) = \mathcal{R}_d(G) / E(G - v).$$
	The ground set of $\mathcal{R}_d^v(G)$ is $E_v$.
	Let $r_d^v:2^{E_v}\to \mathbb{N}$ denote the rank function of $\mathcal{R}_d^v(G)$. Then $r_d^v(E_v)=r_d(G)-r_d(G-v)$, and $$r_d^v(E_v^\pi)=\rc_d^*(G,v,\pi).$$
    	Set $k=\deg_G(v)=|E_v|$. Then
    for any $i\in\{0,1,\dots, k\}$, we have $$\Prob(|E_v^\pi|=i)=\frac{1}{k+1}.$$
    Observe that conditioned on $|E_v^\pi|=i$, the set $E_v^\pi$ is uniformly distributed among all $\binom{k}{i}$ subsets of $E_v$ of size $i$. Hence,
	
	\begin{equation}\label{eq:rc*:equi:form}
		\rc_d^*(G,v)=\sum_{i=0}^{k} \E\big(r_d^v(E_v^\pi)\mid |E_v^\pi|=i\big)\, \Prob(|E_v^\pi|=i)=\sum_{i=0}^{k} \frac{\E(r_d^v(A_i))}{k+1}, 
	\end{equation}
	where $A_i$ is a subset of $E_v$ of size $i$ chosen uniformly at random for all $0\leq i\leq k$.\medskip
    
     By imposing mild conditions on the local structure of $G$, one can often establish (sufficiently strong) lower bounds for $\rc^*_d(G,v)$, which in turn yield bounds for $\rc_d(G,v)$ and $r_d(G)$. This subsection presents several statements of this kind.
	The {\em girth} of a matroid ${\cal M}$ is the size of the smallest
	circuit of ${\cal M}$ (or $+\infty$, if $\cal M$ contains no circuits).

	\begin{lemma} \label{lem:matroid:randomset}
		Let ${\cal M}=(E,r)$ be a matroid with
		rank function $r$ and with girth at least $d+1$. Let $i$ be an integer with $d\leq i\leq |E|$, and $A$ be a uniformly random subset of $E$ of size $i$. Then $$\E(r(A))\geq d+(r(E)-d)\frac{i-d}{|E|-d}.$$  
	\end{lemma}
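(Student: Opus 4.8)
The plan is to reduce the statement to the concavity, in $i$, of the function $f(i):=\E(r(A_i))$, where $A_i$ is a uniformly random $i$-subset of $E$. First I would record the two boundary values. Since the girth of $\mathcal{M}$ is at least $d+1$, no subset of size at most $d$ contains a circuit, so every $d$-subset $A$ is independent and $r(A)=d$; hence $f(d)=d$. Trivially $f(|E|)=r(E)$. Granting that $f$ is concave on $\{d,d+1,\dots,|E|\}$, the claimed inequality is exactly the assertion that a concave sequence lies above the chord joining $(d,f(d))=(d,d)$ and $(|E|,f(|E|))=(|E|,r(E))$, evaluated at $i$; namely $f(i)\ge d+(r(E)-d)\frac{i-d}{|E|-d}$. (The degenerate case $|E|=d$ is trivial.)

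The main work is therefore to show that the increments $g(i):=f(i+1)-f(i)$ are non-increasing. I would compute $g(i)$ in the random-permutation model: if $\pi=(e_1,\dots,e_{|E|})$ is a uniform ordering of $E$, then $r(\{e_1,\dots,e_j\})$ equals the number of positions $l\le j$ at which $e_l\notin\cl(\{e_1,\dots,e_{l-1}\})$, so $g(i)=\Prob[e_{i+1}\notin\cl(\{e_1,\dots,e_i\})]$ is precisely the probability that the element in position $i+1$ raises the rank. Proving concavity thus amounts to showing that a later position is no more likely to be rank-increasing than an earlier one, that is, $g(i)\ge g(i+1)$.

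To establish this I would condition on the unordered set $C$ of the first $i+2$ elements, on the unordered pair $\{x,y\}$ occupying positions $i+1,i+2$, and hence on $A:=C\setminus\{x,y\}$, and then average over the two equally likely orders of $x,y$. Writing out the conditional probabilities, the events "position $i+1$ is new'' and "position $i+2$ is new'' give, respectively, $\tfrac12\big(\mathbbm{1}(x\notin\cl(A))+\mathbbm{1}(y\notin\cl(A))\big)$ and $\tfrac12\big(\mathbbm{1}(y\notin\cl(A\cup\{x\}))+\mathbbm{1}(x\notin\cl(A\cup\{y\}))\big)$, so the claim reduces to the pointwise inequality
\[
\mathbbm{1}(x\notin\cl(A))+\mathbbm{1}(y\notin\cl(A))\ \ge\ \mathbbm{1}(y\notin\cl(A\cup\{x\}))+\mathbbm{1}(x\notin\cl(A\cup\{y\})),
\]
valid for all $A$ and all $x,y\notin A$. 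This holds termwise by monotonicity of the closure: since $A\subseteq A\cup\{x\}$ forces $\cl(A)\subseteq\cl(A\cup\{x\})$, we get $y\notin\cl(A\cup\{x\})\Rightarrow y\notin\cl(A)$, and symmetrically for the other term. Taking the overall expectation over $C$ and $\{x,y\}$ then yields $g(i)\ge g(i+1)$, which finishes the concavity step and hence the lemma.

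The only point needing care—and what I expect to be the main obstacle to phrase cleanly—is the bookkeeping of this conditioning: one must verify that conditioning on $C$ and $\{x,y\}$ makes $A$ a uniform $i$-subset with $x,y$ symmetric, and that the two events depend only on which elements precede position $i+1$ (respectively $i+2$) and on the element sitting there, not on the internal order of the first $i$ elements. Once the coupling is set up correctly, the inequality itself is immediate from closure monotonicity, and no computation is required.
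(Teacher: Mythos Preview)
Your proof is correct, but it follows a genuinely different route from the paper's. The paper argues directly: write the uniform $i$-set $A$ as $A_0\cup A_1$ with $|A_0|=d$ and $A_1$ uniform in $E\setminus A_0$, fix for each $d$-set $X$ a completion $\hat X$ to a basis (possible since the girth forces $X$ to be independent), and note $r(A_0\cup A_1)\ge d+|\hat A_0\cap A_1|$; the expectation of the intersection is exactly $(r(E)-d)\frac{i-d}{|E|-d}$. This is a two-line coupling and avoids any discussion of concavity.

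Your approach instead establishes the stronger statement that $i\mapsto \E(r(A_i))$ is concave for \emph{any} matroid (the girth hypothesis enters only to pin down the endpoint $f(d)=d$), via the clean ``later positions in a random order are no more likely to raise rank'' argument. The pointwise inequality you reduce to is just submodularity of $r$ applied to $A\cup\{x\}$ and $A\cup\{y\}$, phrased through closure monotonicity. The conditioning step is fine: once $C$ and $\{x,y\}$ are fixed, $A=C\setminus\{x,y\}$ is determined and the only residual randomness relevant to either event is the coin-flip ordering of $x,y$. So the bookkeeping worry you flag is not really an obstacle. Your proof is a bit longer but yields a reusable fact; the paper's is shorter and tailored to the exact inequality needed.
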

	
	\begin{proof}
		For every $X\subseteq E$ with $|X|=d$, let us fix a subset $\hat X$ of size $r(E)-d$ such that $X\cup \hat X$ is a basis of ${\cal M}$.
		Let \( (A_0, A_1) \) be a pair of random subsets of \( E \) defined as follows: first, \( A_0 \) is chosen uniformly at random among all subsets of \( E \) of size \( d \); then, conditioned on \( A_0 \), the set \( A_1 \) is chosen uniformly at random among all subsets of \( E \setminus A_0 \) of size \( i - d \).
		The distribution of $A_0\cup A_1$ is the same as that of $A$, and thus  $\E(r(A))=\E(r(A_0\cup A_1))$.
		Since  $r(A_0\cup A_1)\geq d+|\hat A_0\cap A_1|$, it follows that \\
		\begin{equation*}
			\E(r(A))\geq d+\E\big(\big|\hat A_0\cap A_1\big|\big)\geq d+(r(E)-d)\frac{i-d}{|E|-d}. \qedhere
		\end{equation*}
	\end{proof}

	\begin{lemma}\label{lem:rc:tbound}
		Let $G=(V,E)$ be a graph, $v\in V$ and let $d\geq 1$. Suppose $r_d(G)\geq r_d(G-v)+d+t$ with $t\geq 0$.
		Then 
		$$ \rc^*_d(G,v)\geq
		d+\frac{t(\deg(v)-d+1)-d(d+1)}{2(\deg(v)+1)}
		.$$
	\end{lemma}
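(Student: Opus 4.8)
The plan is to work directly with the matroid $\mathcal{R}_d^v(G)$ and the expression for $\rc^*_d(G,v)$ recorded in \eqref{eq:rc*:equi:form}. Write $k=\deg(v)=|E_v|$ and let $r_d^v$ denote the rank function of $\mathcal{R}_d^v(G)$. The hypothesis $r_d(G)\geq r_d(G-v)+d+t$ says exactly that $r_d^v(E_v)\geq d+t$, since $r_d^v(E_v)=r_d(G)-r_d(G-v)$. My intention is to apply Lemma \ref{lem:matroid:randomset} to the matroid $\mathcal{M}=\mathcal{R}_d^v(G)$, for which I first verify that its girth is at least $d+1$, and then feed the resulting bounds on $\E(r_d^v(A_i))$ into \eqref{eq:rc*:equi:form}.

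The key structural step is the \emph{girth claim}: every set $S\subseteq E_v$ with $|S|\leq d$ is independent in $\mathcal{R}_d^v(G)$; equivalently, adding any $j\leq d$ edges $vu_1,\dots,vu_j$ to $G-v$ raises $r_d$ by exactly $j$. I would prove this by the standard generic $0$-extension argument. A dependence of $\{vu_1,\dots,vu_j\}$ in the contraction means that some nonzero combination $\sum_i\lambda_i(\text{row }vu_i)$ of rigidity-matrix rows lies in the span of the rows of $E(G-v)$; restricting to the coordinate block of $v$, where the rows of $E(G-v)$ all vanish, yields $\sum_i\lambda_i(p_v-p_{u_i})=0$. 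Since the $p$ are generic and $j\leq d$, the vectors $p_v-p_{u_i}$ are linearly independent, forcing all $\lambda_i=0$. Hence $\mathcal{R}_d^v(G)$ has no circuit of size at most $d$, so its girth is at least $d+1$; in particular $\E(r_d^v(A_i))=i$ for every $0\leq i\leq d$. (If $k=d$ then $r_d^v(E_v)\leq k=d$ forces $t=0$, and the claim collapses to $\rc^*_d(G,v)\geq d/2$, which is immediate from $\rc^*_d(G,v)=\frac{1}{d+1}\sum_{i=0}^{d} i=\frac{1}{d+1}\binom{d+1}{2}=d/2$; so I may assume $k>d$.)

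With the girth in hand, Lemma \ref{lem:matroid:randomset} applies for $d\leq i\leq k$ and gives $\E(r_d^v(A_i))\geq d+(r_d^v(E_v)-d)\frac{i-d}{k-d}\geq d+t\,\frac{i-d}{k-d}$, using $r_d^v(E_v)-d\geq t\geq 0$ together with $\frac{i-d}{k-d}\geq 0$. Substituting the exact values $\E(r_d^v(A_i))=i$ for $i<d$ and these lower bounds for $d\leq i\leq k$ into \eqref{eq:rc*:equi:form}, I would split the sum as $\sum_{i=0}^{d-1} i+\sum_{i=d}^{k}\bigl(d+t\,\tfrac{i-d}{k-d}\bigr)$ and evaluate the two arithmetic series: the first gives $\binom{d}{2}$, the constant part of the second gives $d(k-d+1)$, and the $t$-part telescopes to $\frac{t(k-d+1)}{2}$. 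Dividing by $k+1$ and clearing denominators, the lower bound becomes $\frac{2dk-d^2+d+t(k-d+1)}{2(k+1)}$, whose numerator equals $2d(k+1)-d(d+1)+t(k-d+1)$; this is precisely $d+\frac{t(\deg(v)-d+1)-d(d+1)}{2(\deg(v)+1)}$.

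The main obstacle is the girth verification — once that is settled, everything reduces to summing two arithmetic progressions and matching numerators. The only remaining subtlety is the degenerate case $k=d$, where the fraction $\frac{i-d}{k-d}$ is undefined, but as noted above $t=0$ in that case and the offending term simply disappears.
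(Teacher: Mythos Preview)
Your proposal is correct and follows essentially the same route as the paper: both verify that the contracted matroid $\mathcal{R}_d^v(G)$ has girth at least $d+1$ (the paper leaves this implicit via the $0$-extension property, Lemma~\ref{ext}, while you spell out the rigidity-matrix argument), apply Lemma~\ref{lem:matroid:randomset} to bound $\E(r_d^v(A_i))$ for $i\geq d$, and then substitute into \eqref{eq:rc*:equi:form} and sum the resulting arithmetic progressions. The only cosmetic differences are where you split the sum ($i=d$ versus $i=d+1$) and your explicit handling of the degenerate case $k=d$, neither of which affects the argument.
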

	
	\begin{proof}
		Let $k=\deg(v)$, and let $A_i$ be a subset of $E_v$ of size $i$ chosen uniformly at random for $0\leq i\leq k$.
		Define $\mu_i=\E\big(r_d^v(A_i)).$
		For $0\leq i\leq d$, we have $\mu_i=i$. For $d+1 \leq i \leq k$, an application of Lemma \ref{lem:matroid:randomset} to the matroid $\mathcal{R}_d^v=(E_v,r_d^v)$ gives
 $$\mu_i\geq d+(r_d(G)-r_d(G-v)-d)\frac{i-d}{k-d}\geq d+\frac{t(i-d)}{k-d}.$$ 
		Hence, using \eqref{eq:rc*:equi:form}, we get
		$$(k+1) \rc_d^*(G,v)= \sum_{i=0}^{k} \mu_i\geq  \sum_{i=  0}^d i +\sum_{i=d+1}^{k}\Big(d+\frac{t(i-d)}{k-d}\Big)=(k+1)d-\frac{d(d+1)}{2}+\frac{t(k-d+1)}{2}.$$
		Rearranging the terms completes the proof.
	\end{proof}

    	\begin{lemma}%
		\label{lemma:rc}
		Let $G=(V,E)$ be a graph and $v\in V$. Suppose that $G$ is ${\cal R}_d$-closed, $\deg_G(v)=k\geq d+1$ and
		$G$ contains no $K_{d+2}$ as a subgraph. Then
		$$\rc_d^*(G,v)\geq d+1-\frac{1}{k+1}{\binom{d+2}{2}}. $$
	\end{lemma}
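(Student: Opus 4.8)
The plan is to study the contracted matroid $\mathcal{R}_d^v(G)=(E_v,r_d^v)$ and prove that it has girth at least $d+2$ and rank $r_d^v(E_v)\ge d+1$; the claimed inequality will then fall out of the expectation formula \eqref{eq:rc*:equi:form} combined with Lemma \ref{lem:matroid:randomset}. The heart of the matter is the following pivotal claim: \emph{any $d+1$ edges $vu_1,\dots,vu_{d+1}$ incident with $v$ are independent in $\mathcal{R}_d^v(G)$}, i.e.\ independent modulo $E(G-v)$.

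To prove this, suppose for contradiction that $vu_1,\dots,vu_{d+1}$ are dependent modulo $E(G-v)$. Since attaching a new vertex of degree at most $d$ is a $0$-extension, every $d$-subset of these edges is independent, so the dependency forces a genuine $\mathcal{R}_d^v(G)$-circuit on all $d+1$ edges. I would first argue that then \emph{every} pair $u_i,u_j$ is $\mathcal{R}_d$-linked in $G-v$. Indeed, if some pair, say $u_1u_2$, were $\mathcal{R}_d$-loose in $G-v$, I would take a basis $B$ of $\mathcal{R}_d(G-v)$; then $B+u_1u_2$ is independent, and performing a $1$-extension on the removed edge $u_1u_2$ (deleting $u_1u_2$ and adding $v$ together with the edges $vu_1,vu_2,vu_3,\dots,vu_{d+1}$) yields, by the independence-preserving property of $1$-extensions (the independence analogue of Lemma \ref{ext}), that $B\cup\{vu_1,\dots,vu_{d+1}\}$ is independent in $\mathcal{R}_d$. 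This gives $r_d^v(\{vu_1,\dots,vu_{d+1}\})=d+1$, contradicting the assumed dependence. Hence all pairs $u_i,u_j$ are $\mathcal{R}_d$-linked in $G-v$, and therefore also in $G$ since the $\mathcal{R}_d$-closure only grows. Because $G$ is $\mathcal{R}_d$-closed, every $\mathcal{R}_d$-linked pair of non-adjacent vertices would have to be loose, a contradiction; so each pair $u_iu_j$ must already be an edge of $G$. Together with the edges $vu_i$ this exhibits a $K_{d+2}$ on $\{v,u_1,\dots,u_{d+1}\}$, contradicting the hypothesis that $G$ is $K_{d+2}$-free. This establishes the pivotal claim.

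The pivotal claim shows at once that $\mathcal{R}_d^v(G)$ has girth at least $d+2$ (no dependent set of size $\le d+1$), and, since $\deg_G(v)=k\ge d+1$, it also gives $r_d^v(E_v)\ge d+1$. To conclude, set $\mu_i=\E\!\big(r_d^v(A_i)\big)$ as in \eqref{eq:rc*:equi:form}. For $0\le i\le d+1$ the girth bound forces $\mu_i=i$, while for $d+1\le i\le k$ I apply Lemma \ref{lem:matroid:randomset} to $\mathcal{R}_d^v(G)$ with $d$ replaced by $d+1$ (legitimate because the girth is at least $d+2$) and use $r_d^v(E_v)\ge d+1$ to get $\mu_i\ge d+1$. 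Summing via \eqref{eq:rc*:equi:form},
\[
(k+1)\,\rc_d^*(G,v)=\sum_{i=0}^{k}\mu_i\ \ge\ \sum_{i=0}^{d+1} i+\sum_{i=d+2}^{k}(d+1)=\binom{d+2}{2}+(k-d-1)(d+1)=(k+1)(d+1)-\binom{d+2}{2},
\]
and dividing by $k+1$ yields exactly $\rc_d^*(G,v)\ge d+1-\frac{1}{k+1}\binom{d+2}{2}$.

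The main obstacle is the pivotal independence claim, and specifically the step that converts a $(d+1)$-edge dependency at $v$ into linkedness of \emph{all} neighbour pairs: this is where the $1$-extension argument is essential, and it is the only place where I need the independence (rather than rigidity) version of the extension operation. Once that claim is in hand, the roles of the two hypotheses are clean---$K_{d+2}$-freeness supplies the final contradiction and $\mathcal{R}_d$-closedness upgrades ``linked'' to ``edge''---and the remaining computation is routine.
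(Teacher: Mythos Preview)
Your proof is correct and follows essentially the same approach as the paper: both arguments hinge on the fact that among any $d+1$ neighbours of $v$ there is a non-adjacent pair (by $K_{d+2}$-freeness), which is $\mathcal{R}_d$-loose (by closedness of $G$, hence also in $G-v$), so a $1$-extension yields rank contribution at least $d+1$; the final summation is identical. The only difference is packaging: the paper argues directly that $\rc_d^*(G,v,\pi)\ge d+1$ whenever $|N_v^\pi|\ge d+1$, whereas you phrase the same fact as a girth bound on $\mathcal{R}_d^v(G)$ and then route through Lemma~\ref{lem:matroid:randomset}---which is unnecessary, since girth $\ge d+2$ already gives $\mu_i\ge d+1$ for $i\ge d+1$ without any averaging.
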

	
	\begin{proof}
		Let $\pi$ be a uniformly random ordering of $V$. If $|N_v^\pi|=i$ for some $0\leq i\leq d$, then $\rc^*_d(G, v, \pi)=i$ by the
		0-extension property (Lemma \ref{ext}). If $|N_v^\pi|\geq d+1$, then $\rc^*_d(G, v, \pi)\geq d+1$ by the
		1-extension property, using that $G$ is ${\cal R}_d$-closed
		and $N_v^\pi$ cannot be complete. Thus, using \eqref{eq:rc*:equi:form}, we obtain
		\begin{equation*}
        \rc_d^*(G,v)\geq \sum_{i=0}^d  \frac{i}{k+1} +
		\sum_{i=d+1}^k \frac{d+1}{k+1}=
		d+1-\frac{1}{k+1}{\binom{d+2}{2}}. \qedhere 
        \end{equation*}
        \end{proof}
	
		For $G=(V,E)$ and $X\subseteq V$, we let $G+K(X)$ denote
	the graph obtained from $G$ by adding an edge $uv$ for
	each non-adjacent vertex pair $u,v\in X$. The following lemma reformulates and slightly strengthens \cite[Lemma 3.2]{vill}. %

	\begin{lemma}\cite{JJV}\label{lem:rc:geq:d}
		Let $ G=( V, E)$ be a graph and $v\in V$ with $\deg_G(v)=k\geq d$. Suppose that $G+K(N_G(v))$ is rigid in $\R^d$ but $G$ is not.
		Then $$\rc_d^*(G,v)\geq d+\frac{1}{2}-\frac{1}{k}{\binom{d+1}{2}}.$$
	\end{lemma}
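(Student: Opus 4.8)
The plan is to mimic the structure of the proof of Lemma~\ref{lemma:rc}, but to exploit the stronger hypothesis carefully. Let $\pi$ be a uniformly random ordering of $V$, write $k=\deg_G(v)$, and recall that $\rc^*_d(G,v,\pi)=r_d^v(E_v^\pi)$, so that by \eqref{eq:rc*:equi:form} we have $(k+1)\,\rc^*_d(G,v)=\sum_{i=0}^k \E\big(r_d^v(A_i)\big)$, where $A_i\subseteq E_v$ is a uniformly random $i$-subset. The key point is to understand the behaviour of $r_d^v(A_i)$, the rank of $i$ of the edges at $v$ after contracting all edges away from $v$. For $0\le i\le d$ the $0$-extension property (Lemma~\ref{ext}) gives $r_d^v(A_i)=i$ deterministically. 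For larger $i$ I want to show the rank is typically $d+1$, and then quantify the deficit coming from the cases where it drops to $d$.

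The first main step is to identify \emph{when} $r_d^v(A_i)=d$ rather than $d+1$ for $i\ge d+1$. Since $G$ is $\mathcal{R}_d$-closed and $G$ itself is not rigid, the $1$-extension property forces $r_d^v(E_v^\pi)\ge d+1$ as soon as the neighbourhood-restricted-to-$T_v^\pi$ fails to be complete; the only way to get rank exactly $d$ with $i\ge d+1$ chosen edges is that the corresponding $i$ neighbours of $v$ span a complete graph together with $v$, i.e.\ they form a clique of size $i$ inside $N_G(v)$ all of whose induced edges are present and which is $\mathcal{R}_d$-rigid. The hypothesis that $G+K(N_G(v))$ is rigid is exactly what lets me apply a $1$-extension at $v$ into the rigid completed neighbourhood whenever the picked neighbours are \emph{not} already mutually adjacent. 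So I would partition the sample space according to whether the $i$ selected neighbours induce a clique.

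The second step is the counting. Let $m$ be the number of edges induced by $N_G(v)$ in $G$, i.e.\ $m=i_G(N_G(v))$; a set of $i$ neighbours induces a clique precisely when it is an independent-of-deficiency configuration, and the number of such ``bad'' $i$-subsets is controlled by $m$ versus $\binom{k}{2}$. The cleanest route is to bound the probability that a random pair of selected neighbours is non-adjacent: conditioned on $|E_v^\pi|=i$ with $i\ge 2$, two fixed picked neighbours are non-adjacent with the right marginal, and a non-adjacent picked pair guarantees (together with the rigidity of $G+K(N_G(v))$ and a $1$-extension) that $r_d^v(A_i)\ge d+1$. Summing the per-$i$ contributions and comparing with the worst case where $N_G(v)$ is as dense as possible should produce the claimed bound $d+\tfrac12-\tfrac1k\binom{d+1}{2}$; the extra $\tfrac12$ over Lemma~\ref{lemma:rc} is precisely the gain from averaging the rank jump over the two orderings of a single non-adjacent pair, and the $\tfrac1k\binom{d+1}{2}$ term is the cost of the at most $\binom{d+1}{2}$ pairs that can be missing inside any $\mathcal{R}_d$-rigid clique on $N_G(v)$.

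The step I expect to be the main obstacle is the precise combinatorial bookkeeping in the second step: turning the qualitative statement ``a non-adjacent picked pair forces rank $\ge d+1$'' into a tight averaged inequality that yields exactly the constant $\tfrac12$ and exactly the coefficient $\binom{d+1}{2}$, rather than a weaker bound. The subtlety is that the events ``$r_d^v(A_i)=d$'' for different $i$ are not independent and the extremal configuration (which neighbourhood structure makes the bound tight) must be pinned down; I would handle this by writing $\rc^*_d(G,v)$ as $d$ plus an expected indicator of a rank jump and then lower-bounding that expectation by the probability that at least one missing edge inside $N_G(v)$ gets ``activated'' by $\pi$, using that at most $\binom{d+1}{2}$ edges of the clique can be absent while keeping it $\mathcal{R}_d$-rigid. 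Since the statement is attributed to \cite{JJV} and only reformulates \cite[Lemma 3.2]{vill}, I expect the remaining algebra to reduce to a short averaging computation once this extremal case is identified.
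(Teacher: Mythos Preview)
The paper does not prove this lemma; it cites \cite{JJV}. So the comparison is between your sketch and what such a proof must actually contain.

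Your general shape is right: for $|A|\le d$ one has $r_d^v(A)=|A|$ by $0$-extension, and for $|A|\ge d+1$ the question is whether $r_d^v(A)=d$ or $\ge d+1$. But several of your details are off, and the main step is genuinely missing.

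First, two small errors. You write ``since $G$ is $\mathcal{R}_d$-closed'' --- this is not assumed. And the relevant clique criterion is not ``$N_A$ spans a clique in $G$'': one has $r_d^v(A)=d$ (for $|A|\ge d+1$) iff $N_A$ is a clique in the \emph{closure} $\overline{G-v}$, since the $1$-extension argument needs a pair that is $\mathcal{R}_d$-loose in $G-v$, not merely non-adjacent in $G$.

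The real gap is quantitative. Fixing a single loose pair $u,w\in N_G(v)$ and using the event ``both $u,w$ precede $v$'' gives that event probability $\tfrac13$, and carrying the computation through yields
\[
\rc_d^*(G,v)\;\ge\; d+\tfrac13-O\!\big(d^3/k^3\big),
\]
not $d+\tfrac12$. (Equivalently, Lemma~\ref{lem:rc:tbound} with $t=1$, which is what your argument amounts to, gives $d+\tfrac12-\frac{d(d+2)}{2(k+1)}$, strictly weaker than the target for every $k>d+1$.) Your heuristic that ``the extra $\tfrac12$ comes from averaging over the two orderings of a single non-adjacent pair'' does not correspond to a valid step, and the claim that $\binom{d+1}{2}$ counts ``edges that can be absent while keeping the clique $\mathcal{R}_d$-rigid'' is not where that term comes from.

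What is actually needed is the per-level estimate $\E\big(r_d^v(A_i)\big)\ge d+\frac{i}{k}$ for every $d+1\le i\le k$; summing these via \eqref{eq:rc*:equi:form} produces exactly $d+\tfrac12-\tfrac1k\binom{d+1}{2}$ (the $\tfrac1k$ rather than $\tfrac1{k+1}$ falls out of the algebra). Proving $\E\big(r_d^v(A_i)\big)\ge d+\tfrac{i}{k}$ is equivalent to bounding the number of $i$-cliques in $H':=\overline{G-v}[N_G(v)]$ by $\binom{k-1}{i}$, and this is where real structure is used: maximal rigid components of $\overline{G-v}$ pairwise meet in at most $d-1$ vertices, so the large cliques of $H'$ are nearly disjoint. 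In the extremal case where some edge $vw^*$ is an $\mathcal{R}_d$-bridge in $G$, the event $vw^*\in A_i$ (probability $\tfrac{i}{k}$) forces $r_d^v(A_i)\ge d+1$ and the bound is immediate; but such a bridge need not exist (e.g.\ take $G$ to be two copies of $K_{d+2}$ glued at $v$), and then one must use the clique structure of $H'$ or the fact that $\rho=r_d^v(E_v)$ is large. None of this is in your proposal.
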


	\subsection{Proof of Theorem \ref{thm:delta:(n+d)/2-1}}
	\label{subsec:proof+d/2}
	
	In this subsection we prove Theorem \ref{thm:delta:(n+d)/2-1}, which states that if $G$ has at least $29d$ vertices and \begin{equation}\label{eq:n+d-2:proof}
	    \delta(G)\geq \frac{n+d-2}{2},
	\end{equation}
	then $G$ is rigid in $\R^d$.
    The  bound in \eqref{eq:n+d-2:proof} is best possible.

	\begin{proof}
		[Proof of Theorem \ref{thm:delta:(n+d)/2-1}]
		The proof is by induction on $d$. For $d=1$, the statement follows from Lemma \ref{4conn}. Let $d\geq 2$, and suppose, for a contradiction, that
		$G=(V,E)$ is a counter-example with $|V|$ as small as possible,  and subject to this, $|E|$ as large as possible.
		Then $G$ is $\mathcal{R}_d$-closed. 
		\begin{claim}\label{claim:smalln}
			$n\leq d(d+1)-1$.
		\end{claim}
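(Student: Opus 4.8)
The plan is to argue by contradiction: assuming $n\ge d(d+1)$, I will show that $G$ is $d$-rigid, i.e.\ $r_d(G)=dn-\binom{d+1}{2}$, contradicting the choice of $G$. (Recall that the theorem's hypothesis $n\ge 29d$ is also available throughout.) The engine is the rank-contribution identity, which by Lemmas \ref{lemma:rc_sum} and \ref{lemma:rc_rcstar} gives $r_d(G)=\sum_{v\in V}\rc_d(G,v)\ge\sum_{v\in V}\rc_d^*(G,v)$; since $r_d(G)\le dn-\binom{d+1}{2}$ always, it suffices to prove $\sum_{v\in V}\rc_d^*(G,v)\ge dn-\binom{d+1}{2}$. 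Note that the degree hypothesis forces $\delta(G)\ge\frac{n+d-2}{2}\ge d+1$ and $\deg_G(v)+1\ge\frac{n+d}{2}$ for every $v\in V$.

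I would first dispose of the case in which $G$ contains no $K_{d+2}$. Then Lemma \ref{lemma:rc} applies to every vertex, yielding $\rc_d^*(G,v)\ge d+1-\frac{1}{\deg_G(v)+1}\binom{d+2}{2}$. Summing, and using $\sum_{v}\frac{1}{\deg_G(v)+1}\le\frac{2n}{n+d}$, I get $\sum_{v}\rc_d^*(G,v)\ge(d+1)n-\frac{(d+1)(d+2)n}{n+d}$. After clearing the denominator, the required bound $\sum_v\rc_d^*(G,v)\ge dn-\binom{d+1}{2}$ reduces to the quadratic inequality $n^2-\tfrac{d^2+3d+4}{2}\,n+\tfrac{d^2(d+1)}{2}\ge 0$. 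This is an upward parabola whose vertex lies below $d(d+1)$ and whose value at $n=d(d+1)$ equals $\tfrac12 d(d+1)(d-2)(d+2)\ge 0$ for $d\ge 2$; hence it holds for all $n\ge d(d+1)$, forcing $G$ rigid.

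The main obstacle is the complementary case, where $G$ contains a copy of $K_{d+2}$ — precisely the situation Lemma \ref{lemma:rc} excludes, because its proof needs every $(d+1)$-element subset of a neighbourhood to be non-complete in order to invoke a $1$-extension. I would control it through a structural consequence of $\mathcal{R}_d$-closedness: every rigid subgraph of an $\mathcal{R}_d$-closed graph induces a clique, so the maximal rigid subgraphs of $G$ are exactly its maximal cliques. If $S$ is such a maximal clique and $w\notin S$ had at least $d$ neighbours in $S$, then a $0$-extension (Lemma \ref{ext}) would make $G[S\cup\{w\}]$ rigid, hence a clique, contradicting maximality of $S$; thus every vertex outside $S$ has at most $d-1$ neighbours in $S$. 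Since $G$ is non-rigid some vertex lies outside $S$, and combining this with $\deg_G(w)\ge\frac{n+d-2}{2}$ gives $|S|\le\frac{n+d-2}{2}$. Using this I would bound, for a vertex $v$ lying in a large clique, the probability that a random initial segment of its neighbourhood is complete — the only orderings on which its contribution drops from $d+1$ to $d$ — noting that an outside neighbour of $v$ meets $S$ in at most $d-1$ vertices, so such complete samples are scarce. The delicate point, which I expect to be the crux, is to show that the total loss incurred this way stays within the slack $\tfrac12 d(d+1)(d-2)(d+2)$ (amplified for small $d$ by the extra hypothesis $n\ge 29d$) produced by the computation of the previous paragraph. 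A coning reduction is available as a fallback for the sub-case of a (near-)universal vertex: writing $G=H^w$ with $H=G-w$, Theorem \ref{thm:coning}(i) and the inductive hypothesis in dimension $d-1$ apply because $\delta(H)\ge\frac{(n-1)+(d-1)-2}{2}$ and $n-1\ge 29(d-1)$.
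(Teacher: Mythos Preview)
Your Case~1 (no $K_{d+2}$) is correct, but your Case~2 is not a proof: you outline a strategy for bounding the ``loss'' at vertices lying in large cliques, admit that the quantitative step is ``the crux'', and do not carry it out. The coning fallback you mention only handles a (near-)universal vertex, not the general situation where $G$ merely contains some $K_{d+2}$; so the argument has a genuine gap.

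The paper avoids this case split entirely by invoking Lemma~\ref{lem:rc:geq:d} instead of Lemma~\ref{lemma:rc}. The point you are missing is that the degree hypothesis already guarantees, for \emph{every} vertex $v$, that $G+K(N_G(v))$ is rigid: any $u\notin N_G(v)\cup\{v\}$ satisfies $|N_G(u)\cap(N_G(v)\cup\{v\})|\ge\deg(u)+\deg(v)-(n-2)\ge d$, so a sequence of $0$-extensions from the clique $K(N_G(v))$ builds a rigid spanning subgraph of $G+K(N_G(v))$. Since $G$ itself is not rigid, Lemma~\ref{lem:rc:geq:d} then gives $\rc_d^*(G,v)\ge d+\tfrac12-\binom{d+1}{2}/\deg(v)$ uniformly, and summing with $\deg(v)>n/2$ yields $r_d(G)>dn-\binom{d+1}{2}$ directly whenever $n\ge d(d+1)$. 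This replaces your entire Case~2 analysis, and also subsumes Case~1 with a cleaner computation (the bound $d+\tfrac12-\binom{d+1}{2}/k$ is slightly weaker than $d+1-\binom{d+2}{2}/(k+1)$ pointwise but still sufficient, and it holds without any $K_{d+2}$-freeness assumption).
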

		\begin{proof}
			Suppose for contradiction that $n\geq d(d+1)$. Let $v\in V$ be arbitrary, and consider the graph $G'=G+K(N_G(v))$.
			For every $u\in V-N_G(v)$,  we have $|N_G(u)\cap (N_G(v)\cup \{v\})|\geq d$. Thus, there is a spanning subgraph of $G'$ that can be obtained from $K(N_G(v))$ by a series of 0-extensions, and hence $G'$ is rigid in $\R^d$.
			Using Lemmas \ref{lemma:rc_sum} and \ref{lem:rc:geq:d}, we obtain
			$$
			r_d(G)
			=\sum_{v\in V}\rc_d(G,v)
			\geq \sum_{v\in V}\Big(d+\frac{1}{2}-\frac{d(d+1)}{2\deg(v)}\Big)
			> n\left(d+\frac{1}{2}-\frac{d(d+1)}{2\cdot \frac{n}{2}}\right)\geq nd- \binom{d+1}{2},
			$$
			which is a contradiction.
		\end{proof}

		\begin{claim}\label{claim:DeltaG}
			$G$ has maximum degree $\Delta(G)\leq n-3d-1$.
		\end{claim}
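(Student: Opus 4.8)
Assume for contradiction that some vertex $z$ has $\deg_G(z) \ge n-3d$, and set $X = V \setminus (N_G(z) \cup \{z\})$, so that $\xi := |X| \le 3d-1$. Two facts will frame the argument. First, since $\delta(G) \ge \lceil (n+d-2)/2 \rceil$ and $n \ge 29d$, the induction hypothesis applied in dimension $d-1$ shows that $G$ itself, and in fact every vertex-deleted subgraph $G-v$, is $(d-1)$-rigid (one checks $\delta(G-v) \ge \lceil (n+d-4)/2\rceil = \lceil((n-1)+(d-1)-2)/2\rceil$ and $n-1 \ge 29(d-1)$); together with Theorem \ref{thm:coning}(iii) this yields $\rup_d(G-v) = \dof_d(G-v)+d$, and via Lemma \ref{lem:dof:easybound:v2} it gives $\rup_d(G) < 2d$, hence $\dof_d(G) \le d-1$. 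Second, for every vertex $v$ one has the upper bound $\rup_{d'}(G-v) \le \frac{2d'(n-d')}{n+d-2d'}$, obtained by applying Lemma \ref{lem:dof:easybound:v2} to $G-v$ in dimension $d'$ and using $\delta(G-v) \ge (n+d-4)/2$.

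The plan is to contradict this upper bound using the lower bound of Lemma \ref{lem:dofup:s1s2s3}(b), with $z$ in the role of the comparison vertex $u$. Because $\deg_G(z)$ is large, every vertex $v$ satisfies $|N_G(v) \cap N_G(z)| \ge \deg_G(v)+\deg_G(z)-n \ge 12d-1$, so the term $s_3 = \min(|N_G(v)\cap N_G(z)|, d')$ attains its cap $d'$ for all $d' \le 12d-1$. The term $s_2 = \min(|N_G(v)\setminus(N_G(z)\cup\{z\})|, d'-d)$ counts the neighbours of $v$ that lie in $X$, while $s_1 = \min(|V\setminus(N_G(v)\cup\{v\})|, d'-d+1)$ counts its non-neighbours. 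I would choose the deleted vertex $v$ to maximise its number of neighbours in $X$ and take $d' \approx d+\xi$, so that $s_1,s_2,s_3$ are all close to their caps and the lower bound $s_1+s_2+s_3$ approaches $3d'-2d+1$. For the relevant range of $\xi$ (comparable to $3d$) and of $n$ (between $29d$ and $d^2+d-1$, as permitted by Claim \ref{claim:smalln}), a direct computation then shows that $s_1+s_2+s_3$ exceeds $\frac{2d'(n-d')}{n+d-2d'}$, contradicting the upper bound and thereby the existence of $z$.

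The main obstacle is realising the term $s_2$: it requires a deleted vertex $v$ with many neighbours inside the \emph{small} set $X$, whereas a naive averaging over $V$ only guarantees about $\xi/2$ such neighbours, which is not enough to push $s_1+s_2+s_3$ past the upper bound. Overcoming this is the technical heart of the argument — one must exploit the edge distribution inside and around $X$ to locate a better vertex $v$ (very possibly with $v \in X$), or treat separately the case where no vertex has enough neighbours in $X$ (so that $X$ is nearly independent and attached almost entirely to $N_G(z)$). The complementary extreme, where $\deg_G(z)$ is so large that $\xi$ is too small for the clash to bite, is also delicate: there $G$ is the cone $(G-z)^z$ with only $\xi$ apex edges removed, and one should instead use the $(d-1)$-rigidity of $G-z$ together with the $\mathcal{R}_d$-closedness of $G$ to rule it out directly. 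Throughout, the choice of $d'$ must be tuned carefully, since the bound from Lemma \ref{lem:dof:easybound:v2} is weak in this small-$n$ regime.
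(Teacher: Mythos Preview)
Your plan has a genuine gap that cannot be closed along the lines you describe. The route via Lemma~\ref{lem:dofup:s1s2s3}(b) is structurally too weak: since $s_2$ counts neighbours of $v$ lying in $X$, one always has $s_2\le\xi$, so for any choice of $d'$ the lower bound is at most
\[
s_1+s_2+s_3 \;\le\; (d'-d+1)+\xi+d' \;=\; 2d'-d+\xi+1.
\]
When $\xi$ is small this is hopeless. Take $\xi=1$ (so $\Delta(G)=n-2$): the best you can get is $2d'-d+2$, while Lemma~\ref{lem:dof:easybound:v2} gives the upper bound $\rup_{d'}(G-v)\le d'(n-d')/\big((n+d)/2-d'\big)$. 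A direct check shows $2d'-d+2$ never exceeds this for $d\le d'<(n+d)/2$ when $n\ge 29d$; for instance at $d'=d$ one compares $d+2$ with $2d$. The ``complementary extreme'' patch you suggest (using $(d-1)$-rigidity of $G-z$) only yields $\dof_d(G)\le\xi$, which for an $\mathcal R_d$-closed non-rigid $G$ is perfectly consistent and gives no contradiction. So the small-$\xi$ range is not just ``delicate''---your mechanism does not reach it at all, and no amount of optimising $v$ or $d'$ will help, because the ceiling on $s_2$ is the bottleneck.

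The paper's argument proceeds quite differently and avoids this ceiling by using part (a) of Lemma~\ref{lem:dofup:s1s2s3} rather than part (b). Writing $\Delta(G)=n-l$ and letting $x$ attain the maximum degree, one takes any non-neighbour $z$ of $x$ and observes that, after adding the $l-1$ missing edges at $x$, every such edge $xz$ is an $\mathcal R_{d+l-2}$-bridge (Lemma~\ref{lem:dim:addedges}); Proposition~\ref{prop:easy:bridges} then forces \emph{every} edge incident with $z$ in $G-x$ to be an $\mathcal R_{d+l-2}$-bridge. Hence
\[
r_{d+l-2}(G-x)-r_{d+l-2}(G-x-z)\;=\;\deg_{G-x}(z)\;\ge\;\tfrac{n+d-2}{2},
\]
and feeding this into Lemma~\ref{lem:dofup:s1s2s3}(a) gives $\rup_{d+l}(G)\ge \tfrac{n+d}{2}+l-1$, a lower bound of order $n$ rather than $O(d')$. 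This is what beats Lemma~\ref{lem:dof:easybound:v2} and forces $n\le 24d$ whenever $l\le 3d$. The missing idea in your outline is precisely this bridge argument, which converts the largeness of $\deg_G(z)$ (for $z$ a \emph{non}-neighbour of the high-degree vertex, not the high-degree vertex itself) into a rank gap of order $n/2$.
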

		\begin{proof}
			It follows by induction on $d$ that $G-v$ is rigid in $\R^{d-1}$ for all $v\in V$. Thus
			Theorem \ref{thm:coning}(i) implies that $\Delta(G)\leq n-2$.
			Suppose that $\Delta(G)=n-l$ for some $l\geq 2$, and let $x\in V$ be a vertex with $\deg_G(x)=\Delta(G)$.
			Let $G'$ be the graph obtained from $G$ by adding the set $\{ux:u\in V-N_G(x)-\{x\}\}$ of $l-1$ edges.
			Let $z\in  V-N_G(x)-\{x\}$. By Lemma \ref{lem:dim:addedges}, the edge $xz$ is an $\mathcal{R}_{d+l-2}$-bridge in $G'$.
			Hence, for every $u\in N_G(z)$, the edge $uz$ is an $\mathcal{R}_{d+l-2}$-bridge in $G-x$ by Proposition \ref{prop:easy:bridges}.
			Since $G$ is ${\cal R}_d$-closed, so is $G-x$. Thus we can apply
			Lemma \ref{lem:dofup:s1s2s3}(a) to the graph $G-x$, vertex $v=z$,
			with $d'=d+l-2$ to obtain
			\begin{gather*}
				\rup_{d+l-2}(G-x-z)\geq r_{d+l-2}(G-x)-r_{d+l-2}(G-x-z)+l-2 \\
				\geq\deg_{G-x}(z)+l-2\geq \frac{n+d}{2}+l-3.
			\end{gather*}
			By Lemmas~\ref{lem:dof:coning} and \ref{lem:rup:subgraph} this gives $\rup_{d+l}(G)\geq \rup_{d+l-2}(G-x-z)+2 \geq \frac{n+d}{2}+l-1$. 
			Next we apply Lemma \ref{lem:dof:easybound:v2} with graph $G$, dimension $d+l$ and minimum degree $\frac{n+d}{2}-1$ to obtain 
			$$\rup_{d+l}(G)\left(\frac{n+d}{2}-d-l+1\right)\leq (d+l)(n-d-l+1). $$
			If $l\leq 3d$, then the bounds on $\rup_{d+l}(G)$ deduced above imply
			$\frac{n}{2}(\frac{n}{2}-4d)\leq 4dn$, which gives $n\leq 24d$, contradicting our assumption on $n$. 
			This proves the claim.
		\end{proof}

        Let $s=\lceil 9d/20\rceil$ and $d'=d+s$. Our strategy will be to prove that $G$ is rigid in $\R^{d'}$. Since  rigidity in $\R^{d'}$ implies rigidity in $\R^d$, this will provide a contradiction. Note that by Claim \ref{claim:smalln}, we have $29d\leq n< d(d+1)$, which implies that $29\leq d$.
		
		\begin{claim}\label{claim:dofup19}
			$\rup_{d'}(G)\geq d'+2s-1$.
		\end{claim}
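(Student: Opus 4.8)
The plan is to bound $\rup_{d'}(G)$ from below by peeling off one vertex, coning back up, and then saturating the three caps in Lemma~\ref{lem:dofup:s1s2s3}(b). First, for an arbitrary $v\in V$, Lemmas~\ref{lem:dof:coning} and~\ref{lem:rup:subgraph} give
$\rup_{d'}(G)\ge \rup_{d'}\big((G-v)^w\big)=\rup_{d'-1}(G-v)+1$,
since $G$ is a subgraph of the cone $(G-v)^w$ (identify the apex $w$ with $v$) and $|V(G-v)|=n-1\ge d'-1$. Hence it suffices to produce a vertex $v$ with $\rup_{d'-1}(G-v)\ge d'+2s-2=d+3s-2$. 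Applying Lemma~\ref{lem:dofup:s1s2s3}(b) to the $\mathcal R_d$-closed graph $G$ at dimension $d''=d'-1=d+s-1$, the three caps are $s_1\le s$, $s_2\le s-1$, $s_3\le d+s-1$, whose sum is at most $d+3s-2$; so I need to realise all three simultaneously. The cap $s_1=s$ is automatic for every $v$: by Claim~\ref{claim:DeltaG}, $\Delta(G)\le n-3d-1$, so $|V\sm(N(v)\cup\{v\})|=n-1-\deg(v)\ge 3d\ge s$. Thus everything reduces to finding a pair $u,v$ for which $v$ has at least $s-1$ \emph{private} neighbours, $|N(v)\sm(N(u)\cup\{u\})|\ge s-1$, and at least $d+s-1$ \emph{common} neighbours with $u$, $|N(u)\cap N(v)|\ge d+s-1$.

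To find such a pair I would take $u$ of minimum degree $\delta=\delta(G)$ and look for $v$ inside $N(u)$, writing $Z=V\sm(N(u)\cup\{u\})$. Two inputs drive the count. Since $G$ is a non-rigid counterexample, Theorem~\ref{thm:delta:n/2+d} forces $\delta<\tfrac n2+d-1$, for otherwise $G$ would already be $d$-rigid. On the other hand, every $z\in Z$ satisfies $\deg(z)+\deg(u)\ge n+d-2$ and $z\not\sim u$, so $z$ has at least $d$ neighbours in $N(u)$; summing over $Z$ gives $e_G(N(u),Z)\ge (n-1-\delta)d$. If no $v\in N(u)$ had at least $s-1$ private neighbours, then $e_G(N(u),Z)\le\delta(s-2)$, whence $(n-1)d\le\delta(d+s-2)$; combined with $\delta<\tfrac{n+2d-2}{2}$ this contradicts the inequality $2(n-1)d>(n+2d-2)(d+s-2)$, which holds throughout the range $n\ge 29d$ with $s=\lceil 9d/20\rceil$ (the left side grows faster in $n$, and the case $n=29d$ checks out). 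Hence some $v\in N(u)$ has at least $s-1$ private neighbours; and as the average private-degree over $N(u)$ is only about $d\ll\deg(v)$, most such $v$ retain the required $d+s-1$ common neighbours with $u$. Securing a single $v$ meeting both conditions at once is the crux.

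The main obstacle is exactly this simultaneous control. The counting above comfortably yields enough private neighbours, but I must rule out the degenerate possibility that every candidate $v$ has almost all of its neighbourhood inside $Z$ (equivalently $|N(u)\cap N(v)|<d+s-1$), a near-universal configuration in which the cap $s_3$ cannot be saturated. This is delicate because the naive bound on the number of such high-private vertices is only $\approx\delta$, i.e.\ right at the boundary, so no slack is visible from one-sided edge counts alone. I expect the decisive slack to come from combining $\Delta(G)\le n-3d-1$ (which limits how many vertices can be adjacent to almost all of $Z$) with the large common-neighbourhood counts forced by $\delta\ge\tfrac{n+d-2}{2}$ and with the $\mathcal R_d$-closedness of $G$, together with $n\ge 29d$. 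Once the pair $u,v$ is secured, Lemma~\ref{lem:dofup:s1s2s3}(b) gives $\rup_{d''}(G-v)\ge s_1+s_2+s_3=d+3s-2$, and the coning step of the first paragraph yields $\rup_{d'}(G)\ge (d+3s-2)+1=d'+2s-1$, as claimed.
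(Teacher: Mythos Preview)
Your reduction is sound and mirrors the paper's: you correctly observe that $\rup_{d'}(G)\ge\rup_{d'-1}(G-v)+1$ for any $v$, and that Lemma~\ref{lem:dofup:s1s2s3}(b) at dimension $d''=d'-1$ will deliver $\rup_{d'-1}(G-v)\ge d+3s-2$ once all three caps are saturated. The saturation of $s_1$ via Claim~\ref{claim:DeltaG} is also correct. However, you explicitly leave the crux unfinished: you do not produce a pair $(u,v)$ for which \emph{both} $|N(v)\setminus(N(u)\cup\{u\})|\ge s-1$ and $|N(u)\cap N(v)|\ge d+s-1$ hold. Your counting only shows that some $v\in N(u)$ has at least $s-1$ private neighbours; it does not rule out that every such $v$ has too \emph{many} private neighbours (hence too few common ones). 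Since for $v\in N(u)$ one has $\deg(v)=|N(v)\cap N(u)|+|N(v)\cap Z|+1$, what you actually need is a $v$ with $|N(v)\cap Z|$ in the window $[s-1,\deg(v)-d-s]$, and your averaging does not pin this down. Your appeal to ``the average private-degree over $N(u)$ is only about $d$'' is not a proof: the distribution of the $|N(v)\cap Z|$ could be bimodal, with every vertex either below $s-1$ or far above $\deg(v)-d-s$.

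The paper resolves this by a different route. It takes $x$ of \emph{maximum} degree, finds any $x'$ with $|N(x')\cap Z|\ge d$ (by averaging over $V$, using $\delta\ge n/2$ and $|Z|\ge 2d$), and then splits on $W=N(x)\cap N(x')$. If $|W|\ge d'-1$, the pair $(u,v)=(x,x')$ works directly. If $|W|\le d'-2$, the paper \emph{abandons} this pair: using $|W|\ge d-2+q$ together with $\Delta\le n-3d-1$, a double-counting argument locates a third vertex $y$ with $|W\setminus(N(y)\cup\{y\})|\ge s-1$; then one of $x_0\in\{x,x'\}$ satisfies $|N(x_0)\cap N(y)|\ge d'$ (since $|N(x)\cup N(x')|>n-d$), and Lemma~\ref{lem:dofup:s1s2s3}(b) is applied with $(u,v)=(y,x_0)$. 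The idea you are missing is precisely this role switch: when the natural candidate pair fails on $s_3$, the small common neighbourhood $W$ itself becomes the source of private neighbours for a \emph{different} pair. Your fixed choice of $u$ of minimum degree and search over $v\in N(u)$ does not obviously admit this escape, and the paper's use of maximum degree is what makes the lower bound $|W|\ge d-2+q$ available in the second case.
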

		\begin{proof}
			Let $x\in V$ with $\deg_G(x)=\Delta(G)$. Let $q=\Delta(G)-(\frac{n+d}{2}-1)$ and
			let $Z=V-N_G(x)-\{x\}$. 
			Since $\delta(G)\geq \frac{n}{2}$ and $|Z|\geq 2d$, there exists a vertex $x'\in V$ such that
			$$d\leq |N_G(x')\cap Z|=|N_G(x')-N_G(x)-\{x\}|.$$
			Consider the set $W=N_G(x)\cap N_G(x')$.
			First suppose that
			$|W| \geq d'-1$. Then we can apply Lemma \ref{lem:dofup:s1s2s3}(b) (with $v=x'$, $u=x$, $s_1=s$, $s_2=s-1$, $s_3=d'-1$) to deduce that $\rup_{d'-1}(G-x)\geq d'+2s-2$. From this
			$\rup_{d'}(G)\geq d'+2s-1$ follows by using  Lemmas \ref{lem:dof:coning} and \ref{lem:rup:subgraph}.
			
			Next suppose that $|W| \leq d'-2$. Note that  $|W|\geq \deg_G(x)+\deg_G(x')-n\geq d-2+q$. Hence, using that $0\leq q \leq \frac{n-d}{2}-3d$ by  Claim \ref{claim:DeltaG}, we have
			\begin{gather*}
				\sum_{v\in V} |W- N_G(v)-\{v\}|= \Big|\big\{(v,w)\in V\times W: vw\notin E, v\neq w \big\}\Big|=\sum_{w\in W}|V-N_G(w)-\{w\}|\\
				\geq |W|\cdot (n-\Delta(G)-1)
				\geq (d-2+q)\Big(\frac{n-d}{2}-q\Big)\geq (d-2)\cdot \frac{n-d}{2}.
			\end{gather*} 
			It follows that there exists a vertex $y\in V$ such that
			$$|W- N_G(y)-\{y\}|\geq (d-2)\cdot \frac{n-d}{2n}\geq  (d-2)\cdot \frac{14}{29}\geq\frac{14d}{29}-1> s-2.$$ 
			Since $N_G(x)\cup N_G(x')> n-d$, there exists a vertex $x_0\in \{x,x'\}$ with $|N_G(x_0)\cap N_G(y)|\geq d'$. 
			Furthermore, $$|N_G(x_0)-N_G(y)-\{y\}|\geq |W-N_G(y)-\{y\}|\geq s-1.$$
			Thus,   by applying Lemma \ref{lem:dofup:s1s2s3}(b) with $v=x_0$, $u=y$, we obtain that $\rup_{d'-1}(G-x_0)\geq d'+2s-2$. 
			Then the claim follows by an application of
			Lemmas \ref{lem:dof:coning} and \ref{lem:rup:subgraph}.
		\end{proof}

		For $U\subseteq V$, define $p(U)=r_{d'}(G^{w,U})-r_{d'}(G)$. Then in the matroid $\mathcal{R}_{d'}^w(G^w)=\mathcal{R}_{d'}(G^w) / E(G)$,  the rank of the edge set $\{uw:u\in U\}$ equals $p(U)$. %
		
		\begin{claim}\label{lem:sum:pNGv}
			$\sum_{v\in V}  p(N_G(v))\geq  n(d'+s/3-1) .$
		\end{claim}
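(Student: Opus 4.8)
The plan is to translate the statement into a single estimate about the contracted cone matroid $\mathcal{M}^w := \mathcal{R}_{d'}^w(G^w)$. Identifying each cone edge $uw$ with the vertex $u$, its ground set is $V$, and by the remark preceding the claim $p(N_G(v)) = r_{\mathcal{M}^w}(N_G(v))$. Two facts pin down $\mathcal{M}^w$: first, any at most $d'$ cone edges at $w$ are $\mathcal{R}_{d'}$-independent by the $0$-extension property (Lemma \ref{ext}, via Theorem \ref{thm:coning}), so after contracting $E(G)$ the matroid $\mathcal{M}^w$ has girth at least $d'+1$; second, $r_{\mathcal{M}^w}(V) = \rup_{d'}(G) \geq d'+2s-1$ by Claim \ref{claim:dofup19}. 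The girth bound already gives $r_{\mathcal{M}^w}(N_G(v)) \geq \min(d', \deg(v)) = d'$ for every $v$ (recall $\deg(v) \geq \frac{n+d-2}{2} > d'$), so the whole content of the claim is to gain an extra $s/3 - 1$ on average beyond this free $d'$.

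First I would record why the two naive bounds are insufficient, to calibrate the argument. Summing the girth bound gives only $\sum_v p(N_G(v)) \geq nd'$; on the other hand, for any basis $B$ of $\mathcal{M}^w$ the set $B \cap N_G(v)$ is independent and contained in $N_G(v)$, so $p(N_G(v)) \geq |B\cap N_G(v)|$ and hence $\sum_v p(N_G(v)) \geq \sum_{u\in B}\deg(u) \geq \rho\delta \approx \frac{n}{2}(d'+2s)$; neither reaches $n(d'+s/3-1)\approx n(d'+\tfrac{4}{3}s)$, because the degree bound $\delta\approx n/2$ costs a factor $2$. The gain must therefore come from the fact that the neighborhoods, although structured, are large and heavily overlapping. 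Concretely, each element $u$ lies in exactly $\deg(u) > n/2$ of the neighborhoods, and by the degree-sum bound (as in Lemma \ref{4conn}) any two neighborhoods meet in at least $2\delta - n \geq d-2$ vertices. I would combine this with Lemma \ref{lem:matroid:randomset}: a uniformly random subset of size $i\approx n/2$ has expected rank at least $d' + (\rho - d')\frac{i-d'}{n-d'} \approx d' + \frac{\rho-d'}{2} \geq d' + s - \tfrac12$, which comfortably exceeds the target per vertex. The task is to transfer this expected-rank estimate for genuinely random large sets to the average over the correlated family $\{N_G(v)\}_{v\in V}$.

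To do this I would expand the rank through a uniformly random ordering $\pi$ of $V$, writing $r_{\mathcal{M}^w}(N_G(v)) = \sum_{u\in N_G(v)} \Pr_\pi\big[u \notin \cl_{\mathcal{M}^w}(N_G(v)\cap T_u^\pi)\big]$ and summing over $v$, so that the left-hand side becomes a sum over the $2|E|$ incident pairs $(v,u)$. For the first $d'$ elements of each neighborhood the girth forces the indicator to be $1$ (contributing $nd'$); for the remaining positions I would lower-bound the probability that $u$ escapes the closure of the earlier neighbors by comparing it, via the heavy overlap and high coverage, to the corresponding escape probability in a size-$(n/2)$ random subset governed by Lemma \ref{lem:matroid:randomset}, using Claim \ref{claim:DeltaG} ($\Delta \leq n-3d-1$) to keep the coverage fractions bounded away from $1$ so that the random-subset comparison is valid. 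The main obstacle --- and the step that determines the precise constant --- is exactly this transfer: showing that the correlations among the $N_G(v)$ cannot concentrate the excess rank $\rho - d' \geq 2s-1$ away from the neighborhoods, so that on average at least a $1/6$ fraction of it is captured, yielding the extra $s/3 - 1$ per vertex and hence $\sum_v p(N_G(v)) \geq n(d'+s/3-1)$. The girth bound together with the large pairwise intersections is what prevents $\mathcal{M}^w$ from decomposing into low-rank parts that the neighborhoods could simultaneously avoid.
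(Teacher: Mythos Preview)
Your setup is correct: $p$ is the rank function of a matroid $\mathcal{M}^w$ on $V$ with girth $\geq d'+1$ and rank $\rho\geq d'+2s-1$, and the two naive bounds you record indeed fall short. But the heart of your proposal is precisely the step you yourself flag as ``the main obstacle'': transferring the random-subset estimate of Lemma~\ref{lem:matroid:randomset} to the deterministic family $\{N_G(v)\}$. You offer no mechanism for this transfer beyond the phrases ``heavy overlap and high coverage'' and an invocation of Claim~\ref{claim:DeltaG}. Lemma~\ref{lem:matroid:randomset} genuinely needs the subset to be uniformly random; there is no coupling in sight between a random $(n/2)$-subset and a fixed $N_G(v)$, and the random-ordering expansion you write down does not by itself produce one. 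As stated, the argument stops exactly at the point where the work begins.

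The paper's proof avoids any probabilistic comparison and is much more direct. Pick $z$ minimising $p(N_G(z))$; if this minimum is already $\geq d'+s/3-1$, you are done. Otherwise, since $p(V)\geq d'+2s-1$, matroid extension gives $X\subseteq V\setminus N_G(z)$ with $|X|\geq \tfrac{5s}{3}$ and $p(N_G(z)\cup X)=p(N_G(z))+|X|$. Split $X=X_0\cup X_1$ with $|X_0|=s+2$. The minimum-degree condition yields, for every $v$,
\[
|N_G(v)\cap(N_G(z)\cup X_0)|\;\geq\;\deg(v)+\deg(z)+(s+2)-n\;\geq\;d+s\;=\;d',
\]
so by the girth bound $p\bigl(N_G(v)\cap(N_G(z)\cup X_0)\bigr)\geq d'$; and since every element of $X_1$ is a coloop over $N_G(z)\cup X_0$, one gets $p(N_G(v))\geq d'+|N_G(v)\cap X_1|$. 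Summing and using $\sum_v|N_G(v)\cap X_1|=\sum_{x\in X_1}\deg(x)\geq \delta|X_1|\geq n(s/3-1)$ finishes. The key idea you are missing is to anchor the whole estimate at a single extremal neighbourhood $N_G(z)$ and exploit the independent extension $X$ sitting \emph{outside} it, rather than trying to average over all neighbourhoods simultaneously.
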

		\begin{proof}
			Let $z\in V$ such that $p(N_G(z))$ is minimum. We may assume that $p(N_G(z))< d'+\frac{s}{3}-1$; otherwise, the statement is obvious.
			By Claim  \ref{claim:dofup19}, we have $p(V)\geq d'+2s-1$, and hence, by the matroidal properties of $\mathcal{R}_{d'}^w(G^w)$, there exists a set $X\subseteq V-N_G(z)$ with $|X|\geq \frac{5s}{3}$ such that $$p(N_G(z)\cup X)=p(N_G(z))+|X|.$$ 
			Fix a set $X_0\subseteq X$ with  $|X_0|=s+2$, and let $X_1=X-X_0$.
			Then, $|X_1|\geq \frac{2s}{3}-2$, and
			\begin{equation*}\label{eq:401}
				\sum_{v\in V}|N_G(v)\cap X_1|=\sum_{x\in X_1}\deg_G(x)\geq \frac{n+d-2}{2}|X_1|\geq n\big(\frac{s}{3}-1\big).
			\end{equation*}
			Furthermore, for every $v\in V$, we have $|N_G(v)\cap (N_G(z)\cup X_0)|\geq d'$.
            
            It follows that 
			\begin{equation*}\label{eq:400}
				p(N_G(v))\geq p(N_G(v)\cap  (N_G(z)\cup X_0))+|N_G(v)\cap X_1|\geq d'+|N_G(v)\cap X_1|.
			\end{equation*}
			Thus \begin{equation*}
			    \displaystyle\sum_{v\in V}  p(N_G(v))\geq nd'+\sum_{v\in V}|N_G(v)\cap X_1|\geq  n\big(d'+\frac{s}{3}-1\big).\qedhere
			\end{equation*}
		\end{proof}
		
		We can now complete the proof of the theorem as follows.
		For $v\in V$, define  $t(v)= r_{d'}(G)-r_{d'}(G-v)-d'$. Let $H=G^{w,N_G(v)}$. Then the graph $H-v$ is isomorphic to $G$.
		By the submodularity of $r_{d'}$, we have  $$r_{d'}(G)+r_{d'}(H-v)\geq r_{d'}(H)+r_{d'}(G-v),$$ which implies that
		$t(v)\geq p(N_G(v))-d'$ for every $v\in V$.
        
		Thus, by Claim \ref{lem:sum:pNGv}, 
		\begin{gather*}
			\sum_{v\in V} \frac{t(v)(\deg(v)-d')}{2(\deg(v)+1)}\geq \frac{\delta(G)-d'}{\delta(G)+1}\sum_{v\in V}\frac{p(N_G(v))-d'}{2}
			\geq \frac{n-2d}{n+d}\bigg( \frac{ns}{6}-\frac{n}{2}\bigg)\geq\\
			\geq \left(1-\frac{3d}{n}\right)\bigg( \frac{ns}{6}-\frac{n}{2}\bigg)
			\geq \frac{ns}{6}-\Big(\frac{n}{2}+\frac{3d^2}{12}\Big)\geq \frac{ns}{6}-\binom{d'+1}{2},
		\end{gather*}
		where in the last inequality we use Claim \ref{claim:smalln}.
		
		Applying Lemma \ref{lem:rc:tbound}, we obtain
		\begin{gather*}
			r_{d'}(G)=\sum_{v\in V} \rc_{d'}(G)\geq \sum_{v\in V}\bigg( d'+ \frac{t(v)(\deg(v)-d')-d'(d'+1)}{2(\deg(v)+1)}\bigg)\geq \\
			\geq nd'+\frac{ns}{6}-\binom{d'+1}{2}- \frac{nd'(d'+1)}{n+d}\geq nd'-\binom{d'+1}{2},
		\end{gather*}
		where the last inequality follows from $\frac{s}{6}-\frac{d'(d'+1)}{n+d}\geq \frac{9d}{6\cdot 20}-\frac{(3d/2)^2}{30d}=0$. Hence, $G$ is rigid in $\R^{d'}$, a contradiction.
	\end{proof}
	
	\section{Degree sum conditions in higher dimensions}
	\label{sec:degreesum}
	
	In this section we obtain sufficient conditions for
	$d$-rigidity in terms of lower bounds on the degree sum
	of non-adjacent vertex pairs. %
	
	\subsection{A general bound for graphs of arbitrary order}
	We start with the proof  of Theorem \ref{thm:intro:g:5d}, which states that if a graph $G$ satisfies
	\begin{equation}
		\label{alln:degsum:3d}
		\deg_G(u)+\deg_G(v)\geq n + 3d-3
	\end{equation}
	for all pairs of non-adjacent vertices, then $G$
	is rigid in $\R^d$.

	\begin{proof}[Proof of Theorem \ref{thm:intro:g:5d}]
		Let $G=(V,E)$ be a graph that satisfies \eqref{alln:degsum:3d} for all non-adjacent pairs $u,v\in V$.
		We prove that $G$ is rigid in $\R^d$. By replacing $G$ with its closure, we may assume without loss of generality that $G$ is $\mathcal{R}_d$-closed.
		
		Let $Y=\{ v\in V: \deg_G(v)< \frac{n+3d-3}{2}\}$ be the set of {\it small degree vertices}. Then $G[Y]$ is complete by \eqref{alln:degsum:3d} (and may be empty).
		Let $K\subseteq V$ such that $G[K]$ is a maximal complete subgraph of $G$ with $Y\subseteq K$.
		Set $L=V-K$. Then for each $v\in L$, we have  $\deg_G(v)\geq \frac{n+3d-3}{2}$ and $e_G(v,K)\leq \min(d-1, |K|)$.
		Hence, $$\delta(G[L])\geq \frac{n+3d-3}{2}-\min(d-1, |K|)\geq \frac{n+3d-3}{2}-\frac{|K|+d-1}{2}=\frac{|L|}{2}+d-1.$$
		By Theorem \ref{thm:delta:n/2+d}, this implies that $G[L]$ is rigid.
		Using that $G$ is $\mathcal{R}_d$-closed, it follows that $G[L]$ is a clique.
		If $G$ is complete, then we are done. Otherwise, there exist $u\in K$ and $v\in L$ such that $uv\notin E$.
		From \eqref{alln:degsum:3d} it follows that $u$ and $v$ have at least
        $3k-1\geq 2d$ common neighbours in $G$, and hence
              $e_G(u,L)\geq d$ or $e_G(v,K)\geq d$.
		Therefore either $G[L\cup \{u\}]$ or $G[K\cup \{v\}]$ is a non-complete rigid subgraph of $G$, contradicting the assumption that $G$ is $\mathcal{R}_d$-closed.
	\end{proof}
	
	\subsection{Tight bound for sufficiently large graphs}
    
	While Theorem \ref{thm:intro:g:5d} establishes the bound $g(n,d)\leq n+3d-3$ for all $n$ and $d$, a gap remains between this and the lower bound
	\begin{equation}\label{eq:lowerbound:gnd}
		n + d - 2\leq g(n,d).
	\end{equation}

    We believe that this lower bound is tight for all
    $n>2d+2$ (see
	Conjecture \ref{gnd:conj2}).
	The goal of this subsection is to prove Theorem \ref{thm:intro:g:d-1}, which states that $g(n,d)= n + d - 2$ for all $d$ and all sufficiently large $n$. This result generalizes \cite[Theorem 1.1]{KLM} with a slightly improved bound on $n$. In fact, our proof follows a similar approach, based on the following observation as well as the rank contribution of vertices in $\mathcal{R}_d$.

	\begin{lemma}\label{lem:simpl:vert}
		Let $G=(V,E)$ be a graph. Suppose that $\deg(u)+\deg(v)\geq n+d-2$ for all pairs of non-adjacent vertices in $G$. If there is a vertex $w\in V$ 
        such that $\{u,v\}$ is ${\cal R}_d$-linked
        for all $u,v\in N_G(w)$,
then $G$ is rigid in $\R^d$.
	\end{lemma}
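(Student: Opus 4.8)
The plan is to exploit the hypothesis to manufacture a large rigid clique and then attach all remaining vertices by $0$-extensions. Write $S=N_G(w)\cup\{w\}$ and $G'=G+K(N_G(w))$. Since every pair $u,v\in N_G(w)$ is $\mathcal{R}_d$-linked in $G$, every edge of $K(N_G(w))$ lies in the $\mathcal{R}_d$-closure of $G$; as $G\subseteq G'\subseteq \cl(G)$ and $r_d(\cl(G))=r_d(G)$, monotonicity of $r_d$ forces $r_d(G')=r_d(G)$. Hence it suffices to prove that $G'$ is rigid in $\R^d$. The point of passing to $G'$ is that $G'[S]$ is now complete: the missing edges inside $N_G(w)$ have been filled in, and $w$ is already joined to all of $N_G(w)$. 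Thus $G'[S]=K(S)$ is rigid in $\R^d$.

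Next I would show that \emph{every} vertex outside $S$ attaches to $S$ by at least $d$ edges. If $V\setminus S=\emptyset$ then $G'=K_n$ is complete and we are done, so assume there is a vertex $v\in V\setminus S$. Then $v\neq w$ and $v\notin N_G(w)$, so $vw\notin E$ and the degree-sum hypothesis gives $\deg(v)\geq n+d-2-\deg(w)=n+d-1-|S|$, using $\deg(w)=|S|-1$. Since $v$ has at most $n-1-|S|$ neighbours among $V\setminus S\setminus\{v\}$, it follows that $e_G(v,S)\geq (n+d-1-|S|)-(n-1-|S|)=d$. In particular the existence of such a $v$ already forces $|S|\geq d$, so $K(S)$ is a rigid clique on at least $d$ vertices and $0$-extensions onto it are legitimate.

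Finally I would assemble $G'$ from $K(S)$ by repeated $0$-extensions. Order $V\setminus S$ as $v_1,\dots,v_m$ and set $S_0=S$ and $S_i=S_{i-1}\cup\{v_i\}$. At each step $v_i$ has at least $d$ neighbours in $S\subseteq S_{i-1}$, so selecting $d$ of these edges realizes a $0$-extension of a rigid graph on $S_{i-1}$; by Lemma \ref{ext} the result is rigid, and since adding the remaining edges preserves rigidity, $G'[S_i]$ itself is rigid. After $m$ steps $G'[S_m]=G'$ is rigid, whence $r_d(G)=r_d(G')$ is full and $G$ is rigid in $\R^d$. The argument is short once the right reduction is found: the only substantial step is the observation that $\mathcal{R}_d$-linkedness of all pairs in $N_G(w)$ lets one replace $G$ by $G'$ and turn $S$ into a rigid clique, after which the degree-sum bound mechanically supplies the $0$-extensions. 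I do not expect a genuine obstacle here, though one must take a little care with the degenerate case $S=V$ and with verifying $|S|\geq d$ so that the first $0$-extension is valid.
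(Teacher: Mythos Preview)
Your proof is correct and follows essentially the same approach as the paper. The paper passes to the full $\mathcal{R}_d$-closure and takes a \emph{maximal} clique containing $N_G(w)\cup\{w\}$, deriving a contradiction from any outside vertex via a single $0$-extension, whereas you add only the clique on $N_G(w)$ and build the whole graph constructively; both arguments hinge on the same two observations, that $N_G(w)\cup\{w\}$ becomes a rigid clique after closure and that the degree-sum condition forces every outside vertex to have at least $d$ neighbours in it.
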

	\begin{proof}
		Let $G'$ denote the $\mathcal{R}_d$-closure of $G$, and let $S$ denote a maximal clique of $G'$ that contains $N_G(w)\cup \{w\}$. Suppose, for a contradiction, that there is a vertex $v\in V-V(S)$. Note that $vw\notin E$, and hence $|V(S)\cap N_{G'}(v)|\geq |N_G(w)\cap N_G(v)|\geq d$. Thus the 0-extension property of $\mathcal{R}_d$ (Lemma \ref{ext}) implies that $G'[V(S)\cup \{v\}]$ is rigid, which contradicts the maximality of $S$. Therefore, $V(S)=V$, and $G'$ is a complete graph. It follows that $G$ is rigid.
	\end{proof}

	\begin{lemma}
		\label{minlarge}
		Let $G=(V,E)$ be a graph on $n\geq 2d+1$ vertices with $\delta(G)=d$.
		Suppose that $\deg(u)+\deg(v)\geq n+d-2$ for all pairs of non-adjacent
		vertices in $G$. Then $G$ is rigid in $\R^d$.
	\end{lemma}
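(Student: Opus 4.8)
The plan is to exploit the very small minimum degree, together with the degree-sum condition, to pin down the structure of $G$ almost completely, and then finish by coning and a single $0$-extension. First I would fix a vertex $v$ with $\deg_G(v)=d=\delta(G)$, write $A=N_G(v)$ (so $|A|=d$), and set $B=V-A-\{v\}$. Since $n\geq 2d+1$, we have $|B|=n-d-1\geq d$. The crucial observation is that every vertex of $B$ is \emph{universal} in $G-v$: indeed, each $b\in B$ is non-adjacent to $v$, so the hypothesis gives $\deg_G(b)\geq (n+d-2)-\deg_G(v)=n-2$; as $b$ already misses $v$, it can miss no other vertex, hence $b$ is adjacent to every vertex of $V-\{v\}$.

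Next I would use coning to reduce the dimension. Writing $H=G-v$, the set $B\subseteq V(H)$ consists of at least $d$ universal vertices, and deleting a universal vertex clearly leaves the remaining ones universal. Choosing distinct $b_1,\dots,b_{d-1}\in B$ and noting that each $H-\{b_1,\dots,b_i\}$ equals the cone of $H-\{b_1,\dots,b_{i+1}\}$ by the universal vertex $b_{i+1}$, repeated application of Theorem~\ref{thm:coning}(i) shows that $H$ is rigid in $\R^d$ if and only if $H-\{b_1,\dots,b_{d-1}\}$ is rigid in $\R^1$. The latter graph still contains at least $|B|-(d-1)\geq 1$ universal vertices, so it is connected; since a connected graph on at least two vertices is rigid in $\R^1$ (and here it has $n-d\geq d+1\geq 2$ vertices), we conclude that $H=G-v$ is rigid in $\R^d$.

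Finally, $v$ has exactly $d$ neighbours, all lying in $V(G-v)$, so $G$ is obtained from the rigid graph $G-v$ by a $d$-dimensional $0$-extension; Lemma~\ref{ext} then yields that $G$ is rigid in $\R^d$, completing the proof.

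I expect the only genuine content to be the structural observation that the vertices of $B$ become universal after deleting $v$; once this is in hand, the argument is a routine combination of the coning theorem and the $0$-extension lemma, with no calculation beyond the inequalities $|B|\geq d$ and $|V(G-v)|\geq d+1$ that the bound $n\geq 2d+1$ supplies.
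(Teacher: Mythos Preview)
Your proof is correct and shares the paper's key structural observation: the set $B=V-N_G(v)-\{v\}$ consists of vertices of degree $n-2$, hence universal in $G-v$. The paper finishes slightly differently: rather than invoking the coning theorem, it simply notes that $G[B]$ is a complete graph on $|B|\geq d$ vertices (hence $d$-rigid), and then observes that every vertex of $N_G(v)$ has all of $B$ as neighbours, so the vertices of $N_G(v)$ and finally $v$ can be added to $G[B]$ by successive $0$-extensions. Your route through repeated coning down to $\R^1$ is equally valid but uses a heavier tool; the paper's version stays entirely within Lemma~\ref{ext} and is a touch more elementary.
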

	
	\begin{proof}
		Let $v\in V$ be a vertex of degree $d$ and
		let $T=V-N(v)-\{v\}$. Then each vertex $w\in T$ has degree $n-2$.
		Thus $G[T]$ is a complete graph on at least $d$ vertices and $G$
		has a spanning subgraph $G'$ that can be obtained from $G[T]$ by a
		sequence of $0$-extensions. Hence $G$ is rigid.
	\end{proof}

		\begin{proof}[Proof of Theorem \ref{thm:intro:g:d-1}]
		Let $G=(V,E)$ be a graph on at least $n\geq d(d+2)$ vertices that satisfies
		\begin{equation}
			\label{alln:degsum}
			\deg(u)+\deg(v)\geq n + d - 2
		\end{equation}
		for all pairs of non-adjacent vertices. We shall prove that $G$ is $d$-rigid. Suppose it is not.
		
		It follows from Lemma \ref{lem:simpl:vert} that for every $v\in V$, the graph $G+K(N_G(v))$ is rigid in $\R^d$.
		Let $k=\delta(G)$ denote the smallest degree of $G$.  We have $k\geq d+1$ by Lemma \ref{minlarge}.
		Hence, Lemmas \ref{lemma:rc_sum} and \ref{lem:rc:geq:d} imply that
		\begin{equation}\label{fq1:Ndproof}
			r_d(G)
			=\sum_{v\in V}\rc_d(G,v)
			\geq \sum_{v\in V}\Big(d+\frac{1}{2}-\frac{d(d+1)}{2\deg(v)}\Big)
			=dn+\frac{n}{2}-\frac{d(d+1)}{2}\sum_{v\in V}\frac{1}{\deg(v)}.
		\end{equation}
		Let $x_0\in V$ be a vertex with $\deg(x_0)=k$, and define
		$X=N_G(x_0)\cup \{x_0\}$ and $Y=V-X$. Set $l=n+d-k-2$. Then $|Y|=l-d+1$, and, for all $y\in Y$, since $yx_0\notin E$, we have $\deg(y)\geq l$.  Using that $\deg(x)\geq k$ for all $x\in X$, we obtain
		\begin{equation}\label{fq2:Ndproof}
			\sum_{v\in V}\frac{1}{\deg(v)}\leq \frac{|X|}{k}+ \frac{|Y|}{l}=\frac{k+1}{k}+\frac{l-d+1}{l}
			\leq 2+\frac{1}{k}\leq\frac{2d+3}{d+1},
		\end{equation}
		where the last inequality follows from $k\geq d+1$.
		Now \eqref{fq1:Ndproof} and \eqref{fq2:Ndproof} together imply that $$r_d(G)\geq dn+\frac{n}{2}-\frac{d(2d+3)}{2}= dn+\frac{n-d(d+2)}{2}-\frac{d(d+1)}{2}\geq dn-\frac{d(d+1)}{2}.
		$$
		This contradicts the assumption that $G$ is not rigid.
	\end{proof}

	\section{Sharp results in $\R^2$ and $\R^3$}
	\label{sec:lowdim}
	
	In this section, we study the cases $d=2,3$ in detail.
	Our main result is Theorem \ref{rigid graph in R^3} below, which characterizes 
	those graphs $G$ that satisfy $\eta(G)\geq n+1$ but are not 3-rigid.  This theorem enables us to determine $f(n,d)$ and $g(n,d)$ for all $n$ and $d=2,3$. 
	As a consequence, it confirms Conjectures \ref{KLMconj} and \ref{gnd:conj2} in these dimensions.
	We first state the theorem and collect its corollaries. Its proof is presented in a separate subsection. 

	Let $W_5$ be a wheel on five vertices and let $B_6$ be the graph
	obtained from $K_6$ by deleting the edges of two disjoint
	paths on three vertices. 	Let $C^1_7$ (resp.\ $C^2_7$) be the graph obtained from $K_7$ by removing the edges of two disjoint cycles, one of length three and one of length four
	(resp.\ by removing the edges of a cycle on seven vertices). See Fig. \ref{fig}. These graphs all have fewer than $3n-6$ edges, and hence they are not rigid in $\R^3$.
	
	\begin{figure}
		\centering
		\begin{pgfpicture}{-22.62mm}{110.60mm}{117.62mm}{139.06mm}
			\pgfsetxvec{\pgfpoint{1.00mm}{0mm}}
			\pgfsetyvec{\pgfpoint{0mm}{1.00mm}}
			\color[rgb]{0,0,0}\pgfsetlinewidth{0.30mm}\pgfsetdash{}{0mm}
			\pgfcircle[fill]{\pgfxy(-9.95,135.09)}{0.50mm}
			\pgfsetlinewidth{0.15mm}\pgfcircle[stroke]{\pgfxy(-9.95,135.09)}{0.50mm}
			\pgfcircle[fill]{\pgfxy(-20.12,124.95)}{0.50mm}
			\pgfcircle[stroke]{\pgfxy(-20.12,124.95)}{0.50mm}
			\pgfcircle[fill]{\pgfxy(-10.00,124.84)}{0.50mm}
			\pgfcircle[stroke]{\pgfxy(-10.00,124.84)}{0.50mm}
			\pgfcircle[fill]{\pgfxy(25.01,134.96)}{0.50mm}
			\pgfcircle[stroke]{\pgfxy(25.01,134.96)}{0.50mm}
			\pgfcircle[fill]{\pgfxy(-15.12,129.92)}{0.50mm}
			\pgfcircle[stroke]{\pgfxy(-15.12,129.92)}{0.50mm}
			\pgfsetlinewidth{0.30mm}\pgfmoveto{\pgfxy(-20.05,134.78)}\pgflineto{\pgfxy(-20.08,124.96)}\pgfstroke
			\pgfmoveto{\pgfxy(-20.05,134.93)}\pgflineto{\pgfxy(-10.00,134.92)}\pgfstroke
			\pgfmoveto{\pgfxy(-9.97,135.03)}\pgflineto{\pgfxy(-9.97,124.95)}\pgfstroke
			\pgfmoveto{\pgfxy(-20.05,124.95)}\pgflineto{\pgfxy(-20.05,124.95)}\pgfstroke
			\pgfmoveto{\pgfxy(-20.05,124.91)}\pgflineto{\pgfxy(-9.86,124.87)}\pgfstroke
			\pgfmoveto{\pgfxy(-20.05,134.85)}\pgflineto{\pgfxy(-10.07,124.95)}\pgfstroke
			\pgfmoveto{\pgfxy(-10.00,135.03)}\pgflineto{\pgfxy(-20.05,124.91)}\pgfstroke
			\pgfputat{\pgfxy(-17.24,113.47)}{\pgfbox[bottom,left]{\fontsize{11.38}{13.66}\selectfont $W_5$}}
			\pgfmoveto{\pgfxy(-20.05,134.85)}\pgflineto{\pgfxy(-10.07,124.95)}\pgfstroke
			\pgfcircle[fill]{\pgfxy(15.02,124.94)}{0.50mm}
			\pgfsetlinewidth{0.15mm}\pgfcircle[stroke]{\pgfxy(15.02,124.94)}{0.50mm}
			\pgfcircle[fill]{\pgfxy(4.99,124.97)}{0.50mm}
			\pgfcircle[stroke]{\pgfxy(4.99,124.97)}{0.50mm}
			\pgfcircle[fill]{\pgfxy(-20.02,134.96)}{0.50mm}
			\pgfcircle[stroke]{\pgfxy(-20.02,134.96)}{0.50mm}
			\pgfcircle[fill]{\pgfxy(15.03,134.96)}{0.50mm}
			\pgfcircle[stroke]{\pgfxy(15.03,134.96)}{0.50mm}
			\pgfcircle[fill]{\pgfxy(44.99,135.02)}{0.50mm}
			\pgfcircle[stroke]{\pgfxy(44.99,135.02)}{0.50mm}
			\pgfcircle[fill]{\pgfxy(4.96,134.96)}{0.50mm}
			\pgfcircle[stroke]{\pgfxy(4.96,134.96)}{0.50mm}
			\pgfcircle[fill]{\pgfxy(25.03,125.05)}{0.50mm}
			\pgfcircle[stroke]{\pgfxy(25.03,125.05)}{0.50mm}
			\pgfsetlinewidth{0.30mm}\pgfmoveto{\pgfxy(15.06,134.84)}\pgflineto{\pgfxy(15.06,124.94)}\pgfstroke
			\pgfmoveto{\pgfxy(4.96,134.98)}\pgflineto{\pgfxy(4.96,125.08)}\pgfstroke
			\pgfmoveto{\pgfxy(24.99,134.76)}\pgflineto{\pgfxy(24.99,124.86)}\pgfstroke
			\pgfmoveto{\pgfxy(4.98,134.90)}\pgflineto{\pgfxy(15.06,124.93)}\pgfstroke
			\pgfmoveto{\pgfxy(15.01,134.97)}\pgflineto{\pgfxy(25.08,125.00)}\pgfstroke
			\pgfmoveto{\pgfxy(5.05,134.94)}\pgflineto{\pgfxy(25.08,124.97)}\pgfstroke
			\pgfmoveto{\pgfxy(15.06,134.98)}\pgflineto{\pgfxy(4.94,124.95)}\pgfstroke
			\pgfmoveto{\pgfxy(25.00,135.00)}\pgflineto{\pgfxy(5.03,125.00)}\pgfstroke
			\pgfmoveto{\pgfxy(25.08,134.85)}\pgflineto{\pgfxy(14.97,124.82)}\pgfstroke
			\pgfputat{\pgfxy(13.43,113.49)}{\pgfbox[bottom,left]{\fontsize{11.38}{13.66}\selectfont $B_6$}}
			\pgfcircle[fill]{\pgfxy(50.01,124.97)}{0.50mm}
			\pgfsetlinewidth{0.15mm}\pgfcircle[stroke]{\pgfxy(50.01,124.97)}{0.50mm}
			\pgfcircle[fill]{\pgfxy(39.95,124.94)}{0.50mm}
			\pgfcircle[stroke]{\pgfxy(39.95,124.94)}{0.50mm}
			\pgfcircle[fill]{\pgfxy(65.05,135.00)}{0.50mm}
			\pgfcircle[stroke]{\pgfxy(65.05,135.00)}{0.50mm}
			\pgfcircle[fill]{\pgfxy(59.92,125.02)}{0.50mm}
			\pgfcircle[stroke]{\pgfxy(59.92,125.02)}{0.50mm}
			\pgfcircle[fill]{\pgfxy(70.00,125.13)}{0.50mm}
			\pgfcircle[stroke]{\pgfxy(70.00,125.13)}{0.50mm}
			\pgfcircle[fill]{\pgfxy(55.02,135.01)}{0.50mm}
			\pgfcircle[stroke]{\pgfxy(55.02,135.01)}{0.50mm}
			\pgfsetlinewidth{0.30mm}\pgfmoveto{\pgfxy(4.97,135.00)}\pgflineto{\pgfxy(15.04,135.00)}\pgfstroke
			\pgfmoveto{\pgfxy(4.94,124.88)}\pgflineto{\pgfxy(15.01,124.88)}\pgfstroke
			\pgfmoveto{\pgfxy(59.82,125.07)}\pgflineto{\pgfxy(70.00,125.09)}\pgfstroke
			\pgfmoveto{\pgfxy(45.07,135.06)}\pgflineto{\pgfxy(39.82,124.92)}\pgfstroke
			\pgfmoveto{\pgfxy(55.19,135.02)}\pgflineto{\pgfxy(49.93,124.86)}\pgfstroke
			\pgfmoveto{\pgfxy(65.12,135.05)}\pgflineto{\pgfxy(59.86,124.89)}\pgfstroke
			\pgfmoveto{\pgfxy(44.98,135.04)}\pgflineto{\pgfxy(50.04,125.00)}\pgfstroke
			\pgfmoveto{\pgfxy(39.93,124.95)}\pgflineto{\pgfxy(50.03,124.95)}\pgfstroke
			\pgfmoveto{\pgfxy(55.01,135.04)}\pgflineto{\pgfxy(60.08,125.00)}\pgfstroke
			\pgfmoveto{\pgfxy(65.17,135.16)}\pgflineto{\pgfxy(70.23,125.12)}\pgfstroke
			\pgfmoveto{\pgfxy(45.00,135.00)}\pgflineto{\pgfxy(59.97,125.02)}\pgfstroke
			\pgfmoveto{\pgfxy(45.00,135.05)}\pgflineto{\pgfxy(70.05,125.05)}\pgfstroke
			\pgfmoveto{\pgfxy(55.02,135.00)}\pgflineto{\pgfxy(39.89,125.08)}\pgfstroke
			\pgfmoveto{\pgfxy(55.02,134.97)}\pgflineto{\pgfxy(70.16,125.26)}\pgfstroke
			\pgfmoveto{\pgfxy(65.05,135.00)}\pgflineto{\pgfxy(50.02,124.98)}\pgfstroke
			\pgfmoveto{\pgfxy(65.03,135.05)}\pgflineto{\pgfxy(39.92,124.95)}\pgfstroke
			\pgfputat{\pgfxy(53.32,113.61)}{\pgfbox[bottom,left]{\fontsize{11.38}{13.66}\selectfont $C_7^1$}}
			\pgfcircle[fill]{\pgfxy(90.08,135.02)}{0.50mm}
			\pgfsetlinewidth{0.15mm}\pgfcircle[stroke]{\pgfxy(90.08,135.02)}{0.50mm}
			\pgfcircle[fill]{\pgfxy(95.10,124.97)}{0.50mm}
			\pgfcircle[stroke]{\pgfxy(95.10,124.97)}{0.50mm}
			\pgfcircle[fill]{\pgfxy(85.03,124.94)}{0.50mm}
			\pgfcircle[stroke]{\pgfxy(85.03,124.94)}{0.50mm}
			\pgfcircle[fill]{\pgfxy(110.14,135.00)}{0.50mm}
			\pgfcircle[stroke]{\pgfxy(110.14,135.00)}{0.50mm}
			\pgfcircle[fill]{\pgfxy(105.01,125.02)}{0.50mm}
			\pgfcircle[stroke]{\pgfxy(105.01,125.02)}{0.50mm}
			\pgfcircle[fill]{\pgfxy(115.12,124.99)}{0.50mm}
			\pgfcircle[stroke]{\pgfxy(115.12,124.99)}{0.50mm}
			\pgfcircle[fill]{\pgfxy(100.11,135.01)}{0.50mm}
			\pgfcircle[stroke]{\pgfxy(100.11,135.01)}{0.50mm}
			\pgfsetlinewidth{0.30mm}\pgfmoveto{\pgfxy(104.95,124.96)}\pgflineto{\pgfxy(115.13,124.98)}\pgfstroke
			\pgfmoveto{\pgfxy(90.15,135.06)}\pgflineto{\pgfxy(84.90,124.92)}\pgfstroke
			\pgfmoveto{\pgfxy(110.21,135.05)}\pgflineto{\pgfxy(104.95,124.89)}\pgfstroke
			\pgfmoveto{\pgfxy(90.07,135.04)}\pgflineto{\pgfxy(95.13,125.00)}\pgfstroke
			\pgfmoveto{\pgfxy(85.02,124.95)}\pgflineto{\pgfxy(95.12,124.95)}\pgfstroke
			\pgfmoveto{\pgfxy(110.26,135.16)}\pgflineto{\pgfxy(115.32,125.12)}\pgfstroke
			\pgfmoveto{\pgfxy(100.11,135.00)}\pgflineto{\pgfxy(84.98,125.08)}\pgfstroke
			\pgfmoveto{\pgfxy(100.11,134.97)}\pgflineto{\pgfxy(115.24,125.26)}\pgfstroke
			\pgfputat{\pgfxy(98.40,113.61)}{\pgfbox[bottom,left]{\fontsize{11.38}{13.66}\selectfont $C_7^2$}}
			\pgfmoveto{\pgfxy(89.96,135.04)}\pgflineto{\pgfxy(110.06,135.04)}\pgfstroke
			\pgfmoveto{\pgfxy(90.00,135.06)}\pgfcurveto{\pgfxy(93.15,136.36)}{\pgfxy(96.52,137.04)}{\pgfxy(99.93,137.06)}\pgfcurveto{\pgfxy(103.40,137.08)}{\pgfxy(106.83,136.42)}{\pgfxy(110.04,135.11)}\pgfstroke
			\pgfmoveto{\pgfxy(95.11,124.96)}\pgflineto{\pgfxy(105.28,124.98)}\pgfstroke
			\pgfmoveto{\pgfxy(84.94,124.91)}\pgfcurveto{\pgfxy(88.15,123.60)}{\pgfxy(91.58,122.93)}{\pgfxy(95.05,122.93)}\pgfcurveto{\pgfxy(98.49,122.93)}{\pgfxy(101.89,123.58)}{\pgfxy(105.08,124.85)}\pgfstroke
			\pgfmoveto{\pgfxy(94.80,124.80)}\pgfcurveto{\pgfxy(98.07,123.55)}{\pgfxy(101.54,122.92)}{\pgfxy(105.04,122.93)}\pgfcurveto{\pgfxy(108.43,122.94)}{\pgfxy(111.78,123.56)}{\pgfxy(114.94,124.75)}\pgfstroke
		\end{pgfpicture}
		\mycaption{}
		\label{fig}
	\end{figure}

		\begin{theorem}
		\label{rigid graph in R^3}
		Let $G=(V,E)$ be a graph on $n$ vertices. Suppose that $G$ is not isomorphic to
		$W_5$, $B_6$, $C_7^1$, or $C_7^2$. If
		\begin{equation*}
			\label{3dimthm}
			\deg(u)+\deg(v)\geq n+1
		\end{equation*}
		for all pairs of non-adjacent vertices in $G$, then
		 $G$ is rigid in $\R^3$.
	\end{theorem}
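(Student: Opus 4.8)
The plan is to argue by contradiction. Suppose $G$ satisfies $\deg(u)+\deg(v)\ge n+1$ on non-adjacent pairs, is not $3$-rigid, and is none of the four listed graphs. We may first assume $G$ is $\mathcal{R}_3$-closed: passing to the closure keeps $n$ and the hypothesis, replaces non-rigidity by non-completeness, and one checks that each exceptional graph is itself $\mathcal{R}_3$-closed while each of its proper spanning subgraphs violates $\eta\ge n+1$, so a characterisation of the closed examples transfers back to $G$. By Lemma \ref{4conn} with $r=1$ the hypothesis makes $G$ $3$-connected, so $\delta(G)\ge 3$; and if $\delta(G)=3$ with $n\ge 7$, then Lemma \ref{minlarge} already forces rigidity, so we may assume $\delta(G)\ge 4$ once $n\ge 7$. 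Moreover, since $d(d+2)=15$ for $d=3$, Theorem \ref{thm:intro:g:d-1} gives $g(n,3)=n+1$ for every $n\ge 15$, so $G$ would be rigid whenever $n\ge 15$. It remains only to treat $n\le 14$, which is exactly why the exceptional graphs have at most seven vertices.

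For $n\le 14$ I would exploit the clique structure forced by the degree sum. Call $v$ \emph{small} if $\deg_G(v)\le n/2$; two small vertices are always adjacent, so the small vertices form a clique $Y$. Extend $Y$ to a maximal clique $K$ and set $L=V\setminus K$. Since $G$ is $\mathcal{R}_3$-closed and $K$ is maximal, no $v\in L$ can have three neighbours in $K$: otherwise $G[K\cup\{v\}]$ would be a rigid induced subgraph of a closed graph, hence complete, contradicting maximality. Thus $e_G(v,K)\le 2$ for all $v\in L$, so each $v\in L$ retains $\ge n/2-1$ neighbours inside $L$ and $\delta(G[L])\ge |L|/2+|K|/2-1$. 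When $|K|\ge 6$ this yields $\delta(G[L])\ge |L|/2+2$, so $G[L]$ is rigid by Theorem \ref{thm:delta:n/2+d}, while $G[K]$ is rigid as a clique. It then remains to glue the two rigid pieces along the cross-edges; using $3$-connectivity together with $e_G(v,K)\le 2$ to locate three vertices of the sparser side carrying two cross-edges each, I would apply Lemma \ref{union} in the appropriate orientation. This disposes of every configuration with a large clique $K$.

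The remaining, and genuinely difficult, regime is $|K|\le 5$ with $n\le 14$, where the clique is too small to absorb the slack of the tight bound and the gluing lemmas have little to act on. Here I would reduce along low-degree vertices: a vertex of degree $4$ is a $1$-extension vertex, so by Lemma \ref{ext} it suffices to show that deleting it and re-adding one edge between two of its (necessarily non-complete) neighbours yields a rigid graph, and one can also use Whiteley's vertex splitting theorem, or Lemma \ref{cycle} when the low-degree vertices attach in a cyclic pattern, to build up rigidity. The obstruction is purely local: deleting a vertex lowers the degree sum of a non-adjacent pair lying in its neighbourhood by $2$, so the reduced graph need not satisfy $\eta\ge n$ for those finitely many pairs, and a reduction can be blocked. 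The main obstacle of the whole proof is the exhaustive analysis certifying that such a rigidity-preserving reduction always exists \emph{unless} $G$ is one of $W_5,B_6,C_7^1,C_7^2$ — equivalently, that these are the only $\mathcal{R}_3$-closed, non-complete graphs on at most seven vertices in which the six relative degrees of freedom between the two rigid parts are never fully removed. Proving that this list is complete, with neither a missing nor a spurious exception, is where the real work lies: the cases $n\le 7$ produce the four graphs, and the cases $8\le n\le 14$ must be shown to be exception-free.
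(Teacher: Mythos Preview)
Your proposal has a genuine gap: the last paragraph is not a proof sketch but an admission that the hard part is undone (``the main obstacle \ldots\ is the exhaustive analysis \ldots\ is where the real work lies''). You reduce to $n\le 14$ with $|K|\le 5$ and then propose to classify all $\mathcal{R}_3$-closed non-complete graphs with $\eta\ge n+1$ in that range by local reductions; you neither carry this out nor indicate why the list terminates at $n=7$. There are smaller issues as well: your appeal to Lemma~\ref{union} in the $|K|\ge 6$ case is not justified (you need three vertices of $K$ each sending at least two edges to $L$, and $3$-connectivity alone does not supply this), and Lemma~\ref{minlarge} requires $n\ge 2d+1=7$, so the case $\delta=3$ with $n\in\{5,6\}$---precisely where $W_5$ and $B_6$ live---is left unaddressed.

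More importantly, the paper does \emph{not} proceed by the exhaustive enumeration you envisage. It takes a minimum counterexample $G$ (hence $\mathcal{R}_3$-closed), lets $C$ be a maximum complete subgraph, and sets $H=G[V\setminus C]$. The key step is to prove $|C|\ge 4$: this uses the rank-contribution Lemma~\ref{lemma:rc}, which for a $K_4$-free closed graph yields $r_3(G)\ge 4n-20+30/(n-\delta+2)$, forcing rigidity except possibly when $\delta=4$ and $6\le n\le 10$; that narrow window is handled directly by Lemmas~\ref{claim2} and~\ref{claim3}. Once $|C|\ge 4$, the bound $e(v,C)\le 2$ for $v\notin C$ makes $H$ inherit the hypothesis $\eta(H)\ge |V(H)|+1$; one then checks that $H$ is not rigid and not one of the four exceptions (a short explicit argument, Claim~\ref{claim1}), so the minimality of $G$ is contradicted. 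The rank-contribution estimate is the idea you are missing: it replaces your proposed case search and is what confines the genuine casework to $n\le 10$ rather than $n\le 14$.
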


		Using Theorem \ref{rigid graph in R^3}, we can deduce Conjectures \ref{KLMconj} and \ref{gnd:conj2} in the case $d=3$.
		
		\begin{cor}
			\label{conjd3}
			Let $n\geq 4$. If $n\in \{5,6,7\}$, then $g(n,3)=n+2$. Otherwise, $g(n,3)=n+1$. 
			Moreover,
			 \begin{equation}\label{eq:fn3}
			 	f(n,3)=  \max \Big\{ \Big\lceil\frac{n+1}{2}\Big\rceil, \Big\lceil 6-\frac{12}{n}\Big\rceil \Big\}.
			 \end{equation}
		\end{cor}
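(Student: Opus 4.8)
The plan is to deduce the two assertions in sequence: first determine $g(n,3)$ using Theorem \ref{rigid graph in R^3} as the engine, then read off $f(n,3)$ from the elementary inequality $f(n,3)\le\lceil g(n,3)/2\rceil$ together with the classical lower bounds on $f$. I begin with $g$. For the \emph{upper bounds}, fix $n$ and a graph $G$ with $\eta(G)\ge n+1$. Since the four exceptional graphs $W_5,B_6,C_7^1,C_7^2$ have exactly $5,6,7,7$ vertices, when $n\notin\{5,6,7\}$ the graph $G$ cannot be any of them, so Theorem \ref{rigid graph in R^3} forces $G$ to be $3$-rigid and hence $g(n,3)\le n+1$. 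When $n\in\{5,6,7\}$ I instead assume $\eta(G)\ge n+2$; each exceptional graph has $\eta=n+1$, so $G$ is again non-exceptional and Theorem \ref{rigid graph in R^3} gives $g(n,3)\le n+2$.

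For the \emph{lower bounds} I exhibit non-rigid witnesses. Gluing a copy of $K_3$ and a copy of $K_{n-1}$ along two common vertices produces a graph on $n$ vertices whose two shared vertices form a $2$-separator; it is therefore not $3$-connected and not $3$-rigid, and a direct degree count shows that its only non-adjacent pairs (one private vertex from each clique) sum to exactly $2+(n-2)=n$, so $\eta=n$ and $g(n,3)\ge n+1$ for all $n\ge4$. For $n\in\{5,6,7\}$ the exceptional graph on $n$ vertices (namely $W_5$, $B_6$, and either $C_7^1$ or $C_7^2$) has $\eta=n+1$ and fewer than $3n-6$ edges, hence is non-rigid, upgrading the bound to $g(n,3)\ge n+2$. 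Combining the two directions gives $g(n,3)=n+2$ for $n\in\{5,6,7\}$ and $g(n,3)=n+1$ otherwise.

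To obtain $f$, the upper bound $f(n,3)\le\lceil g(n,3)/2\rceil$ is immediate from $\eta(G)\ge 2\delta(G)$. Substituting the values of $g$ and simplifying the ceiling, one checks that $\lceil g(n,3)/2\rceil=\max\{\lceil(n+1)/2\rceil,\lceil 6-12/n\rceil\}$ for every $n\ge4$, which is exactly the claimed expression, so $f(n,3)\le\max\{\dots\}$. For the matching lower bound, the term $\lceil(n+1)/2\rceil$ is supplied by \eqref{bound:d-2:2}. A comparison of the two ceilings shows that $\lceil 6-12/n\rceil$ strictly exceeds $\lceil(n+1)/2\rceil$ only at $n=5$ and $n=7$; there the non-rigid graphs $W_5$ (with $\delta=3$) and $C_7^1$ (which is $4$-regular) have minimum degree one below the respective target, witnessing $f(5,3)\ge4$ and $f(7,3)\ge5$, while for all other $n\ge4$ the bound \eqref{bound:d-2:2} already attains the maximum. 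Hence $f(n,3)$ is sandwiched between equal quantities and the formula follows.

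All the genuine rigidity content is carried by Theorem \ref{rigid graph in R^3}; the remaining work is bookkeeping, and the step I expect to need the most care is the alignment of the boundary cases. Concretely, one must verify that each of $W_5,B_6,C_7^1,C_7^2$ attains $\eta=n+1$ while being non-rigid, and that the two ceiling expressions agree with $\lceil g(n,3)/2\rceil$ at every $n$ — in particular that the edge-counting term $\lceil 6-12/n\rceil$ is decisive exactly at $n\in\{5,7\}$, precisely where the exceptional graphs conveniently furnish the extremal minimum-degree examples. Once these finite verifications are in place, the sandwich argument closes immediately.
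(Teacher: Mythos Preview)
Your proposal is correct and follows essentially the same approach as the paper: upper bounds from Theorem \ref{rigid graph in R^3}, lower bounds for $n\ge 8$ via the connectivity obstruction (your gluing of $K_3$ and $K_{n-1}$ along two vertices is exactly the construction behind Lemma \ref{4conn}), and lower bounds for $n\le 7$ from the explicit graphs $K_4-e$, $W_5$, $B_6$, $C_7^1$. The paper's proof is a terser version of what you wrote; your additional verification that $\lceil g(n,3)/2\rceil$ matches the stated maximum at each $n$ is routine but correctly handled.
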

		
		\begin{proof}
			The 
upper bounds for
$g(n,3)$ and
$f(n,3)$
directly follow from Theorem \ref{rigid graph in R^3}.
			For $n\geq 8$, 
            Lemma \ref{4conn} implies that these bounds are tight. For $n\leq 7$, this is demonstrated by the graphs $K_4-e,W_5, B_6, C_7^1$.
		\end{proof}
		
		 Corollary \ref{conjd3} shows that for any $n$, the bound $f(n,3)\leq \lceil \frac{g(n,3)}{2}\rceil$ is tight.
		 
		In the rest of this subsection, we consider the case $d=2$.
		It turns out that Theorem \ref{rigid graph in R^3} can  also be applied to answer the corresponding 2-dimensional questions.

		\begin{theorem}
			\label{rigid graphs in R2}
			Let $G=(V,E)$ be a graph on $n$ vertices such that $G\neq C_4$.
			If
			\begin{equation}
				\label{2dim}
				\deg(u)+\deg(v)\geq n
			\end{equation}
			for all pairs of non-adjacent vertices in $G$, then
			$G$ is rigid in $\R^2$.
		\end{theorem}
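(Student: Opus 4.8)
The plan is to deduce the statement from the three-dimensional result Theorem~\ref{rigid graph in R^3} by means of the coning theorem. Recall from Theorem~\ref{thm:coning}(i) that $G$ is rigid in $\R^2$ if and only if its cone $G^w$ is rigid in $\R^3$; hence it suffices to prove that $G^w$, a graph on $N:=n+1$ vertices, is $3$-rigid. First I would translate the degree-sum hypothesis to $G^w$. Since $w$ is adjacent to every vertex of $G$, every non-adjacent pair of $G^w$ lies in $V(G)$ and is already non-adjacent in $G$; for such a pair $u,v$ we have $\deg_{G^w}(u)+\deg_{G^w}(v)=\deg_G(u)+\deg_G(v)+2\geq n+2=N+1$. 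Thus $G^w$ satisfies precisely the degree-sum hypothesis of Theorem~\ref{rigid graph in R^3} (with $n$ replaced by $N$).

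Applying Theorem~\ref{rigid graph in R^3} to $G^w$ then yields the dichotomy that $G^w$ is either rigid in $\R^3$ or isomorphic to one of $W_5$, $B_6$, $C_7^1$, or $C_7^2$. In the former case the coning theorem immediately gives that $G$ is rigid in $\R^2$, as desired, so the entire task reduces to ruling out the four exceptional graphs. To do this I would exploit the defining structural feature of a cone: the apex $w$ is a \emph{universal} vertex of $G^w$, i.e.\ $\deg_{G^w}(w)=N-1$, so under any isomorphism $G^w\to H$ the vertex $w$ must map to a universal vertex of $H$. It therefore suffices to determine which of the four exceptional graphs possess a universal vertex. A direct inspection of the degrees (using Fig.~\ref{fig}) shows that $B_6$ has maximum degree $4<5$, while $C_7^1$ and $C_7^2$ are $4$-regular; none of these three has a vertex of degree $N-1$, so $G^w$ cannot be isomorphic to any of them. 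The wheel $W_5$ is the only exception with a universal vertex, namely its hub, which is moreover its \emph{unique} universal vertex. Consequently, if $G^w\cong W_5$, then $w$ corresponds to the hub, and deleting it gives $G\cong W_5-w= C_4$, contradicting the hypothesis $G\neq C_4$. This eliminates all four exceptional graphs, forcing $G^w$ to be $3$-rigid and hence $G$ to be $2$-rigid.

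The argument is short because Theorem~\ref{rigid graph in R^3} does the heavy lifting; the only genuinely new ingredients are the observation that coning converts the bound $n$ into the bound $N+1=n+2$ required in dimension three, and the fact that the single two-dimensional exception $C_4$ is exactly the graph recovered from the unique coned exception $W_5$. The step requiring the most care is therefore the finite case check of the four forbidden graphs---specifically, verifying that only $W_5$ admits a universal vertex and that deleting it returns $C_4$---since any mismatch here (for instance an exceptional graph other than $W_5$ that happened to arise as a cone) would produce an unaccounted-for two-dimensional counterexample and break the reduction.
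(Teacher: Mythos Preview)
Your proposal is correct and follows essentially the same route as the paper: cone $G$, observe that the degree-sum condition $\geq n$ becomes $\geq N+1$ for $G^w$, apply Theorem~\ref{rigid graph in R^3}, and then note that among the four exceptional graphs only $W_5$ is a cone (the cone of $C_4$). The paper's proof is terser but identical in substance.
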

		
		\begin{proof}
			Suppose that $G$ satisfies (\ref{2dim}) but $G$ is not rigid in $\R^2$.
			Then $G^w$, the cone of $G$, satisfies (\ref{3dim}), and it is not
			rigid in $\R^3$ by Whiteley's coning theorem.
			So $G^w$ is isomorphic to one of the graphs $\{W_5, B_6, C_7^1, C_7^2\}$ from
			Theorem \ref{rigid graph in R^3}.
			The only cone graph in this list 
			is $W_5$, which is the cone of $C_4$. However, we assumed that $G\not=C_4$.
		\end{proof}

		Theorem \ref{rigid graphs in R2} implies an affirmative answer to Conjectures \ref{KLMconj} and \ref{gnd:conj2} for
		$d=2$.
		The proof is similar to that of Corollary \ref{conjd3}.
		
		\begin{cor}
			\label{conjd2}
			Let $n\geq 3$. If $n=4$, then $g(n,2)=n+1$. Otherwise, $g(n,2)=n$. Moreover, $$f(n,2)=  \max \Big\{ \Big\lceil\frac{n}{2}\Big\rceil, \Big\lceil 4-\frac{6}{n}\Big\rceil \Big\}.$$
		\end{cor}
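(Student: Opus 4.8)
The plan is to mirror the proof of Corollary \ref{conjd3}: read off the upper bounds for $g(n,2)$ and $f(n,2)$ from Theorem \ref{rigid graphs in R2}, and match them against the lower bounds \eqref{bound:d-2:2}, \eqref{bound:bonyi}, and \eqref{eq:lowerbound:gnd}, treating the single exceptional graph $C_4$ by hand.

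First I would establish the upper bounds. By Theorem \ref{rigid graphs in R2}, any graph $G\neq C_4$ with $\eta(G)\geq n$ is rigid in $\R^2$. Since $C_4$ has exactly four vertices, for every $n\neq 4$ the hypothesis $\eta(G)\geq n$ already forces rigidity, so $g(n,2)\leq n$; for $n=4$ the only possible obstruction is $C_4$ itself, and as $\eta(C_4)=4$ the stronger hypothesis $\eta(G)\geq 5$ rules it out, giving $g(4,2)\leq 5$. The matching bounds for $f$ then follow from $f(n,d)\leq\lceil g(n,d)/2\rceil$, namely $f(n,2)\leq\lceil n/2\rceil$ for $n\neq 4$ and $f(4,2)\leq\lceil 5/2\rceil=3$.

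Next I would invoke the lower bounds already available in the excerpt. The inequality $\lceil n/2\rceil\leq f(n,2)$ is exactly \eqref{bound:d-2:2}, and $n\leq g(n,2)$ is \eqref{eq:lowerbound:gnd}; both are witnessed, via Lemma \ref{4conn} with $r=0$, by a graph formed from two cliques sharing a single vertex, which has a cut vertex, attains $\eta=n-1$, and is therefore not $2$-connected and hence not rigid. The edge-counting bound \eqref{bound:bonyi} supplies $\lceil 4-6/n\rceil\leq f(n,2)$. Together these give $f(n,2)\geq\max\{\lceil n/2\rceil,\lceil 4-6/n\rceil\}$ for all $n\geq 3$.

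Finally I would reconcile the two sides. A short arithmetic check shows $\max\{\lceil n/2\rceil,\lceil 4-6/n\rceil\}=\lceil n/2\rceil$ whenever $n\neq 4$, while the maximum equals $3$ when $n=4$; in every case this meets the upper bound found above, yielding the stated formula for $f(n,2)$ together with $g(n,2)=n$ for $n\neq 4$. The remaining value is pinned down by $C_4$ directly: being non-rigid with $\eta(C_4)=4$ it forces $g(4,2)\geq 5$, and being non-rigid with $\delta(C_4)=2$ it forces $f(4,2)\geq 3$, so $g(4,2)=n+1=5$ and $f(4,2)=3$. The one place demanding genuine care is precisely this case $n=4$, where $C_4$ both raises $g$ by one and makes the term $\lceil 4-6/n\rceil$ dominate the formula for $f$; the two effects are consistent since $\lceil g(4,2)/2\rceil=\lceil 5/2\rceil=3=\lceil 4-6/4\rceil$.
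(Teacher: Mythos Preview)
Your proposal is correct and follows essentially the same approach that the paper intends (it explicitly says the proof is ``similar to that of Corollary~\ref{conjd3}''): upper bounds come from Theorem~\ref{rigid graphs in R2}, lower bounds from Lemma~\ref{4conn}/\eqref{bound:d-2:2} and \eqref{eq:lowerbound:gnd}, with the single small case $n=4$ handled by the explicit non-rigid graph $C_4$. Your observation that the edge-counting term $\lceil 4-6/n\rceil$ is dominated by $\lceil n/2\rceil$ for every $n\neq 4$, so that $C_4$ is the only place where it matters, is exactly the point.
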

		
		\subsection{The proof of Theorem \ref{rigid graph in R^3}}

		The proof of Theorem \ref{rigid graph in R^3} will follow an inductive argument.
		We start with two technical lemmas that are concerned with graphs on at most ten vertices.
		
		\begin{lemma}\label{claim2}
			Let $G=(V,E)$ be a graph on $n$ vertices with $\delta(G)\geq 4$
			and $6\leq n\leq 7$. Then either $G$ is
			isomorphic to $C_7^1$ or $C_7^2$, or $G$ is rigid in $\R^3$.
		\end{lemma}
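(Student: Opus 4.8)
The plan is to pass to the complement $\bar G$, since $\delta(G)\ge 4$ is equivalent to $\Delta(\bar G)\le n-5$: for $n=6$ this makes $\bar G$ a matching, and for $n=7$ it makes $\bar G$ a vertex-disjoint union of paths and cycles. The engine of the whole argument will be one clean fact about six-vertex graphs, which I would isolate first: \emph{any graph $H$ on six vertices with $|E(H)|\ge 12$ and $\delta(H)\ge 3$ is rigid in $\R^3$.} Indeed, if $\delta(H)\ge 4$ then $\bar H$ is a matching, so $H$ contains a spanning copy of $K_{2,2,2}$ (extend the matching to a perfect one); and $K_{2,2,2}$ is rigid, being a $1$-extension of $K_5-e$, which in turn is a $0$-extension of $K_4$, so rigidity propagates through Lemma \ref{ext}. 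If instead $H$ has a vertex $x$ of degree $3$, then $H-x$ has at least $9$ edges on five vertices, hence $H-x\in\{K_5,K_5-e\}$ is rigid, and $H$ is recovered from it by a $0$-extension at $x$, again rigid by Lemma \ref{ext}.

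For $n=6$ the statement then follows immediately: $\bar G$ is a matching, so $\delta(G)\ge 4$ gives $|E(G)|\ge 12$ and $G$ falls under the six-vertex fact; no exceptional graphs arise. For $n=7$ I would first dispose of the dense-complement case. If $\bar G$ is $2$-regular, then $\bar G\in\{C_7,\,C_3\cup C_4\}$, so $G\in\{C_7^2,\,C_7^1\}$; these have only $14$ edges and are not rigid, and they are exactly the allowed exceptions. Otherwise $m:=|E(\bar G)|\le 6$, hence $|E(G)|\ge 15$, and the goal is to prove $G$ rigid.

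The reduction from seven to six vertices splits according to $m$. If $m\le 5$, I would delete a vertex $z$ with $\deg_{\bar G}(z)\ge m-3$ (such a $z$ exists: for $m\le3$ any vertex works, for $m=4$ any non-isolated one, and for $m=5$ a degree-$2$ vertex of $\bar G$ must occur since the degree sum $2m=10$ exceeds $7$). Then $\overline{G-z}$ has at most $3$ edges, so $G-z$ has at least $12$ edges and $\delta(G-z)\ge3$; by the six-vertex fact $G-z$ is rigid, and since $\deg_G(z)\ge4\ge3$, the graph $G$ is a $0$-extension of $G-z$, hence rigid. The only remaining case is $m=6$, which I would handle by a $1$-extension. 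Here $\bar G$ has a vertex $v$ of degree $2$ (again from the degree sum), with $\bar G$-neighbours $a,b$; a short count shows that the number of $\bar G$-edges meeting $\{v,a,b\}$ is at most $\deg_{\bar G}(v)+\deg_{\bar G}(a)+\deg_{\bar G}(b)-2\le 4$, so at least $2$ of the $6$ edges of $\bar G$ lie entirely inside $N_G(v)=V\setminus\{v,a,b\}$. Picking such an edge $uw$ (a non-adjacent pair of neighbours of $v$) and setting $H=(G-v)+uw$, we get a six-vertex graph with $12$ edges and $\delta(H)\ge3$, hence rigid; as $G$ is a $1$-extension of $H$, it is rigid by Lemma \ref{ext}.

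The delicate point is entirely the case $m=6$, where Maxwell's count $|E(G)|=3n-6$ is tight: one cannot afford to merely count edges, but must actually exhibit a valid $1$-extension, and the crux is the counting argument guaranteeing that two complement-edges always sit inside the neighbourhood of a degree-$4$ vertex. The restricted shape of $\bar G$ (a union of paths and cycles) is precisely what keeps every sub-case uniform and lets the single six-vertex rigidity fact, together with $0$- and $1$-extensions from the base graphs $K_4$, $K_5-e$, $K_5$ and $K_{2,2,2}$, carry the whole proof.
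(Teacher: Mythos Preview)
Your proof is correct, but it takes a quite different route from the paper's. The paper argues matroidally: it passes to a spanning subgraph $H$ with $\delta(H)\ge 4$ and exactly $3n-6$ edges, and shows $H$ is $\mathcal{R}_3$-independent (hence rigid) by invoking the known fact that every $\mathcal{R}_3$-circuit on at most seven vertices is itself rigid, so has $3k-5$ edges on $k$ vertices; a short count then shows such a circuit cannot sit inside $H$ without violating $|E(H)|=3n-6$. Your argument is instead fully constructive: you isolate a clean six-vertex rigidity fact (built from $K_4$ via $K_5-e$ and $K_{2,2,2}$ using only $0$- and $1$-extensions), and then for $n=7$ reduce to six vertices by deleting a well-chosen vertex (when $m\le 5$) or by exhibiting an explicit $1$-extension (when $m=6$). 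The paper's proof is shorter and uniform across $n=6,7$, but relies on a nontrivial external fact about small rigidity circuits; your proof is longer and case-based, but entirely self-contained, using nothing beyond Lemma~\ref{ext}. One small wording point: when $\deg_G(z)\ge 4$ you say ``$G$ is a $0$-extension of $G-z$''; strictly, $G$ contains a $0$-extension of $G-z$ as a spanning subgraph, which is what you need.
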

		
		\begin{proof}
			Suppose that $G$ is not isomorphic to $C_7^1$
			or $C_7^2$. Then $|E|\geq 3n-6$. 
			Since the complement of $G$ has maximum degree at most two, $G$ has a spanning
			subgraph $H$ with $\delta(H)\geq 4$ and $|E(H)|=3n-6$.
			We claim that $H$ is ${\cal R}_3$-independent, from which the
			rigidity of $G$ follows.
			Suppose not. Then $H$ contains an ${\cal R}_3$-circuit $D$ as a subgraph.
			We have $5\leq |V(D)|\leq n\leq 7$.
			It is well-known that every ${\cal R}_3$-circuit on at most seven vertices
			is rigid in $\R^3$. Thus $|E(D)|=3|V(D)|-5$. Then $\delta(H) \geq 4$
			gives $|E(H)|>3n-6$, a contradiction. 
		\end{proof}
		
		\begin{lemma}
			\label{claim3}
			Let $G=(V,E)$ be a graph on $n$ vertices with $\delta(G)=4$ and
			$8\leq n\leq 10$. Suppose that
			\begin{equation}
				\label{eq}
				\deg(u)+\deg(v)\geq n+1
			\end{equation}
			for all pairs of non-adjacent vertices $u,v$ in $G$. Then
			$G$ contains a subgraph on at least four vertices which is
			rigid in $\R^3$.
		\end{lemma}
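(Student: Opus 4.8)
The plan is to locate one of three explicit rigid subgraphs: a $K_4$, a $K_5-e$, or the octahedron $K_{2,2,2}$. Each is rigid in $\R^3$ (on $4$, $5$, and $6$ vertices respectively): $K_4$ is the unique rigid graph on four vertices; $K_5$ is an $\mathcal{R}_3$-circuit, so $K_5-e$ is independent of rank $9=3\cdot5-6$ and hence rigid; and $K_{2,2,2}$ has $12=3\cdot6-6$ edges and is the well-known isostatic octahedron. Finding any of these as a subgraph proves the lemma.

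First I would fix a vertex $v$ with $\deg(v)=4$, write $N(v)=\{a,b,c,d\}$, and set $U=V\setminus(N(v)\cup\{v\})$, so that $|U|=n-5\in\{3,4,5\}$. The engine of the argument is the following consequence of the degree-sum hypothesis: for every $u\in U$ we have $uv\notin E$, hence $\deg(u)\ge (n+1)-4=n-3$, so $u$ fails to be adjacent to at most two vertices of $V$, one of them being $v$. Thus each $u\in U$ is adjacent to all of $V\setminus\{v,u\}$ with at most one exception; in particular $u$ meets at least three of $a,b,c,d$, and the non-edges inside $U$ form a matching $M$, i.e.\ $G[U]=K_{|U|}-M$.

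The case analysis then proceeds as follows. If $N(v)$ contains a triangle, then $v$ together with that triangle induces a $K_4$ and we are done; so I may assume $N(v)$ is triangle-free, hence a subgraph of $C_4$. Next I exploit the density of $U$: if $|U|=5$ and $|M|\le 1$ then $G[U]\supseteq K_5-e$, and if $|U|=4$ and $M=\emptyset$ then $G[U]=K_4$, settling these subcases immediately. In the remaining subcases $U$ carries a matching edge $u_1u_2$ (or $|U|=3$); here the key point is that a non-edge $u_1u_2$ of $U$ forces each of $u_1,u_2$ to be adjacent to all four vertices of $N(v)$, since its single permitted miss is spent on the other endpoint. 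Combining this with $v$ (which is also adjacent to all of $N(v)$) I would produce an octahedron whenever $N(v)=C_4$, and otherwise use a pigeonhole over the vertices of $U$ to find either a vertex of $N(v)$ adjacent to a triangle of $U$, yielding a $K_4$, or a pair of adjacent $U$-vertices with three common neighbours in $N(v)$, completing a $K_5-e$ (any three vertices of $C_4$ span a path, contributing the missing edges).

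The hard part will be the configurations in which $N(v)$ is a proper subgraph of $C_4$, so that neither the octahedron nor an obvious $K_5-e$ is available from $v$ and $N(v)$ alone. To handle these I would bring in the degree-sum condition applied to the non-adjacent pairs inside $N(v)$ itself: a diagonal non-edge of $N(v)$ again forces three common neighbours, which (as $v$ contributes only itself) pushes extra edges between $N(v)$ and the dense set $U$. Since $n\le 10$, the number of resulting configurations is finite, and a pigeonhole count on the at-most-five vertices of $U$ locates in each a common neighbour completing a $K_4$ (or one of the other two targets). The bookkeeping for $n=8,9,10$ differs only in the size of $U$ and the exact degree threshold $n-3$, so the three orders can be treated largely in parallel.
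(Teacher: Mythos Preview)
Your setup is exactly the paper's: both pick a minimum-degree vertex, set $U=T=V\setminus(N(v)\cup\{v\})$, and use the degree-sum condition to deduce that every $u\in U$ misses at most one vertex besides $v$ (so non-edges in $U$ form a matching). The difference is in how the cases are closed. The paper, in the awkward subcases, shows that $G$ itself (or a large induced piece) is rigid by exhibiting a sequence of $1$-extensions from a $K_4$; you instead always hunt for one of three explicit templates $K_4$, $K_5-e$, $K_{2,2,2}$. Both routes are valid and of comparable length; yours is a little more self-contained (no appeal to $1$-extension), while the paper's sometimes yields the stronger conclusion that the whole graph is rigid.

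One point to tighten in your ``hard part'': the assertion that a non-edge $ab$ inside $N(v)$ ``pushes extra edges between $N(v)$ and $U$'' is not quite right. The $\ge 3$ common neighbours of $a,b$ (when $n=8$) can include another vertex of $N(v)$, since a common neighbour $d\in N(v)$ creates only a path $adb$, not a triangle. Concretely, in the case $|U|=3$ with the single $U$-non-edge $u_1u_2$ and $u_3$ missing some $d\in N(v)$, your $K_5-e$ recipe can fail if $\{a,b,c\}=N(v)\setminus\{d\}$ is edgeless. What the degree-sum on the non-edges $ab,ac,bc$ actually forces is that at least two of $ad,bd,cd$ are present; then $\{u_1,u_2,u_3,a,b,d\}$ has non-edges exactly $u_1u_2$, $u_3d$, $ab$, a perfect matching, so it is the octahedron $K_{2,2,2}$. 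With this correction your plan goes through, and the analogous $|U|=4$ subcase with two $U$-non-edges $u_1u_2,u_3u_4$ yields the octahedron $\{a,b,u_1,u_2,u_3,u_4\}$ for any non-edge $ab$ of $N(v)$, independent of the rest of $G[N(v)]$.
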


		\begin{proof}
			Let $w$ be a vertex with $\deg(w)=\delta(G)=4$, $K=N_G(w)$, and
			$T=V-K-\{w\}$. Then $3\leq |T|\leq 5$ and 
			$\deg(v)\geq |T|+2$ for each vertex $v\in T$ by (\ref{eq}).
			Furthermore, we can observe that
			for each pair of non-adjacent vertices $u,v$ in $T$, $u$ is adjacent to all vertices in $G$ except for $w$ and $v$.
						
			First suppose that $|T|=3$. Since $\deg(t)\geq 5$ for each $t\in T$,
			there is a vertex $k\in K$
			which is adjacent to all vertices in $T$. So if $G[T]=K_3$, then $G$
			contains a $K_4$. Otherwise (\ref{eq}) and the observation above
			implies that
			$G[T]$ is a $K_{1,2}$ with center vertex $t$, 
			$e(t,K)\geq 3$, 
			and $e(t',K)=4$ for each vertex $t'\in T-\{t\}$.
			Moreover, $G[K]$ has at least two edges. These facts imply that
			$G$ can be obtained from a $K_4$ on vertex set $K$
			by four $1$-extensions and edge additions. Thus $G$ is rigid in $\R^3$.
			
			Next suppose $|T|=4$. If $G[T]=K_4$, we are done.
			Otherwise
			$G[T]$ is a graph obtained from $K_4$ by removing at most two disjoint edges. If $G[T]$ contains exactly four edges, then $e(k,T)=4$ for each $v\in K$.
			Let $k,k'\in K$. Then (a spanning subgraph of)
			$G[T\cup \{k,k'\}]$ can be obtained from a $K_4$ on vertex set $T$ by two 
			$1$-extensions, so it is rigid. If $G[T]$ contains exactly five edges, then $G[T]$ contains a $K_3$, whose vertices have a common neighbor in $K$.
			So $G$ contains a $K_4$.
			
			Finally, suppose that
			$|T|=5$. If $G[T]$ contains a $K_4$, we are done. Otherwise the observation
			above implies that
			$G[T]$ is a graph obtained from $K_5$ by removing two disjoint edges. Furthermore, all but one of the vertices in $T$ are connected to $K$ by four edges and each vertex of $T$ has degree at least seven in $G$.
			Therefore (a spanning subgraph of) $G$ can be obtained from $K_4$ on vertex set $K$ by six $1$-extensions. So $G$ is rigid in $\R^3$.
		\end{proof}
		
		In the special case $d=3$,  Lemma \ref{minlarge} easily extends to $n=5,6$:
		
		\begin{lemma}
			\label{minlarge3}
			Let $G=(V,E)$ be a graph on $n\geq 5$ vertices with $\delta(G)=3$.
			Suppose that $\deg(u)+\deg(v)\geq n+1$ for all pairs of non-adjacent
			vertices in $G$. Then either $G$ is rigid in $\R^3$, or $G$ is
			isomorphic to $W_5$ or $B_6$.
		\end{lemma}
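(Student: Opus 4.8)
The plan is to split the argument according to the number of vertices. For $n\geq 7$ the statement is immediate, since the hypotheses are then exactly those of Lemma~\ref{minlarge} with $d=3$: indeed $n\geq 7=2d+1$, $\delta(G)=3=d$, and the degree-sum bound $n+1$ coincides with $n+d-2$. Hence $G$ is rigid in $\R^3$ and no exceptional graph arises. It therefore remains to treat the two small cases $n=5$ and $n=6$ by a direct structural analysis, and this is where the exceptions $W_5$ and $B_6$ appear.

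In both remaining cases I would fix a vertex $v$ with $\deg(v)=3$, write $K=N_G(v)=\{a,b,c\}$ and $T=V-K-\{v\}$, so that $|T|=n-4\in\{1,2\}$. The key observation is that every $w\in T$ is non-adjacent to $v$, whence $\deg(w)\geq (n+1)-3=n-2$; since $w$ can only be joined to the $n-2$ vertices of $K\cup(T-\{w\})$, it must be adjacent to all of them. Thus each vertex of $T$ is adjacent to every vertex except $v$, which determines $G$ up to the edges lying inside $G[K]$. In particular, when $n=6$ this already forces $w_1w_2\in E$ and both $w_i$ joined to all of $K$.

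The only remaining freedom is $E(G[K])$, and I would conclude by a short case analysis on $|E(G[K])|\in\{0,1,2,3\}$, invoking the degree-sum condition on non-adjacent pairs inside $K$ to discard the sparse configurations. For $n=5$: zero or one edge in $G[K]$ forces a non-adjacent pair of total degree below $6$, a contradiction; two edges give exactly $W_5$ (the four degree-$3$ vertices $v,w,a,c$ forming a $4$-cycle around the degree-$4$ hub $b$); and a triangle in $K$ yields $G=K_5-vw$, which is obtained from a $K_4$ by a single $0$-extension and is therefore rigid by Lemma~\ref{ext}. For $n=6$: zero edges again violate the degree-sum condition; a single edge produces exactly $B_6$; and two or three edges let me build $G$ from a $K_4$ (say on $\{a,b,w_1,w_2\}$) by successive $0$-extensions, giving rigidity via Lemma~\ref{ext}. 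Since $W_5$ and $B_6$ have fewer than $3n-6$ edges, they are genuinely non-rigid, so they occur precisely as the stated exceptions.

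The main obstacle is not conceptual but lies in the bookkeeping of these boundary cases: checking that the two-edge configuration (for $n=5$) and the one-edge configuration (for $n=6$) are the \emph{unique} graphs isomorphic to $W_5$ and $B_6$ respectively, and confirming that every denser configuration can be assembled by $0$-extensions from a complete base. These are finite verifications, with the degree-sum hypothesis doing most of the work by forcing every vertex of $T$ to be adjacent to all vertices but $v$.
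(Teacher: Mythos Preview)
Your proposal is correct and matches the paper's intended approach. The paper does not actually spell out a proof of this lemma; it merely remarks that Lemma~\ref{minlarge} ``easily extends to $n=5,6$'', and your argument---invoking Lemma~\ref{minlarge} directly for $n\geq 7$ and carrying out the finite structural check for $n\in\{5,6\}$ via the forced adjacencies of $T$---is precisely that extension.
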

		
		We are now ready to prove Theorem \ref{rigid graph in R^3}.
		
		\begin{proof}[Proof of Theorem \ref{rigid graph in R^3}]
			Suppose for a contradiction  that there exists a graph $G$ that is not rigid in $\R^d$, is not isomorphic to $W_5$, $B_6$, $C_7^1$, or $C_7^2$, and satisfies 
			\begin{equation}
				\label{3dim}
				\deg(u)+\deg(v)\geq n+1
			\end{equation}
			for all non-adjacent pairs $u,v\in V$.
			Let us choose  $G$ such that $|V|$ is as small as possible and,
			subject to this, $|E|$ is as large as possible.
			Then $G$ is ${\cal R}_3$-closed.
			By Lemmas \ref{4conn} and \ref{minlarge3}, we must have $\delta(G)\geq 4$.
			Let $G'$ be a maximum size rigid (or equivalently, complete) subgraph of $G$,
			let $C=V(G')$, $D=V-C$, $H=G[D]$ and $q=|C|$.
			
			\begin{claim}
				\label{mrg}
				$q\geq 4$.
			\end{claim}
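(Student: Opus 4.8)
Suppose, for contradiction, that $q\le 3$. Since $G$ is $\mathcal{R}_3$-closed, a vertex set induces a rigid subgraph if and only if it induces a clique (a rigid $G[S]$ has all pairs of $S$ linked, and closedness turns linked pairs into edges), so the assumption $q\le 3$ is equivalent to $G$ being $K_4$-free; the task is therefore to exhibit a copy of $K_4$. The plan is first to bound $n$ from above. Theorem \ref{thm:intro:g:d-1} with $d=3$ guarantees that every graph on $n\ge d(d+2)=15$ vertices with $\deg(u)+\deg(v)\ge n+1$ for all non-adjacent pairs is $3$-rigid; as $G$ is a non-rigid graph satisfying \eqref{3dim}, we must have $n\le 14$. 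On the other hand $\delta(G)\ge 4$ forces $n\ge 5$, and $n=5$ would make $G=K_5$, which is rigid. For $6\le n\le 7$, Lemma \ref{claim2} shows that $G$ is either rigid or isomorphic to $C_7^1$ or $C_7^2$, all of which are excluded. Hence we may assume $8\le n\le 14$.

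Next I would locate a $K_4$ by exploiting a vertex $w$ of minimum degree together with the structure induced on $T=V-N_G(w)-\{w\}$. Every $t\in T$ is non-adjacent to $w$, so \eqref{3dim} gives $\deg(t)\ge n+1-\delta(G)$; thus $G[T]$ is dense and each $t\in T$ has at least three neighbours inside $N_G(w)$. When $\delta(G)=4$ and $n\le 10$ this is exactly the setting of Lemma \ref{claim3}, which already yields a rigid subgraph on at least four vertices, i.e.\ $q\ge 4$, the desired contradiction. The remaining configurations, namely $\delta(G)\ge 5$ or $11\le n\le 14$, should be handled by the same kind of local analysis: since each vertex of $T$ is adjacent to all but a few vertices of $V-\{w\}$, one can argue that some vertex of $N_G(w)\cup T$ has three pairwise-adjacent neighbours, which produces the required $K_4$.

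The main obstacle is precisely this dense small-order regime $8\le n\le 14$. The global rank-contribution estimate that disposes of the large-$n$ case, obtained by combining Lemma \ref{lem:simpl:vert} (which, under the hypothesis $\deg(u)+\deg(v)\ge n+d-2$, makes $G+K(N_G(v))$ rigid for every $v$) with Lemmas \ref{lem:rc:geq:d} and \ref{lemma:rc_sum}, only forces $r_3(G)>3n-6$ once $n$ exceeds roughly $15$, and so it cannot be used to find a clique for small $n$. Consequently the clique must be exhibited by a careful case-by-case combinatorial argument driven by the minimum-degree vertex $w$ and by the high degrees forced inside $T$; this verification across the finitely many remaining orders $8\le n\le 14$ is the technically delicate part of the claim.
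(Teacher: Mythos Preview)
Your proposal is not a proof: you yourself flag the regime $8\le n\le 14$ with $\delta(G)\ge 5$, and $11\le n\le 14$ with $\delta(G)=4$, as unhandled and defer it to an unspecified ``case-by-case combinatorial argument''. That is the heart of the claim, and it cannot be left as a promise. Moreover, your diagnosis of why the rank-contribution method falls short is inaccurate: you invoke Lemma~\ref{lem:rc:geq:d} (which needs $G+K(N_G(v))$ rigid), but the relevant tool here is Lemma~\ref{lemma:rc}, which applies directly because $G$ is $\mathcal{R}_3$-closed and, under the contradiction hypothesis $q\le 3$, contains no $K_4$ (hence no $K_5$). That lemma gives the sharper bound $\rc_3^*(G,v)\ge 4-\tfrac{10}{\deg(v)+1}$.

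The paper exploits this as follows. Let $u$ be a vertex of minimum degree $k$ and set $T=V\setminus(N_G(u)\cup\{u\})$. By \eqref{3dim}, every $t\in T$ has $\deg(t)\ge n+1-k$, while the remaining $k+1$ vertices have degree at least $k$. Summing the bound from Lemma~\ref{lemma:rc} over $V$ and applying Lemma~\ref{lemma:rc_sum} yields
\[
r_3(G)\ \ge\ 4n-(k+1)\cdot\frac{10}{k+1}-(n-k-1)\cdot\frac{10}{n-k+2}\ =\ 4n-20+\frac{30}{n-k+2}.
\]
A short check shows this is at least $3n-6$ (forcing rigidity, a contradiction) \emph{unless} $k=4$ and $6\le n\le 10$. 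Those residual cases are precisely the ones covered by Lemmas~\ref{claim2} and~\ref{claim3}. So the rank-contribution argument does not merely dispose of ``large $n$''; with the correct lemma and the two-part degree split it eliminates every case you left open.
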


			\begin{proof}
				For a contradiction suppose that $G$ contains no $K_4$.
				Let
				$k=\delta(G)$ and
				let
				$u\in V$ with $\deg(u)=k$.
				Lemma \ref{lemma:rc} implies that for each $v\in V$
				we have $\rc_3(G,v)\geq 4 - \frac{10}{\deg(v)+1}.$ Let $T=V-N_G(u)-\{u\}$. Then $|T|=n-k-1$ and, by  (\ref{3dim}), for all $t\in T$ we have
				$\deg(t)\geq n-k+1$.
				Hence,
				\begin{equation}
					\begin{aligned}
						\sum_{v\in V} \rc_3(G,v) %
						&=\sum_{v\in V-T} \big(4-\frac{10}{\deg(v)+1}\big)+\sum_{v\in T}
						\big(4-\frac{10}{\deg(v)+1}\big)\\
						&\geq 4n-(k+1)\frac{10}{k+1} - (n-k-1)\frac{10}{n-k+2}=
						4n-20+
						\frac{30}{n-k+2}.\notag
					\end{aligned}
				\end{equation}
				A simple computation shows that either the right hand side is at least
				$3n-6$, or  $k=4$ and $6\leq n \leq 10$.
				In the former case we can use
				Lemma \ref{lemma:rc} to obtain
				$$r_3(G)\geq  \sum_{v\in V} \rc_3(G,v) \geq 3n-6,$$
				which contradicts the assumption that $G$ is not rigid.
				In the latter case
				Lemmas \ref{claim2} and \ref{claim3} imply that $G$ contains a $K_4$.
			\end{proof}

			\begin{claim}
				$q\leq \frac{n-1}{2}$ and $H$ is not rigid.
			\end{claim}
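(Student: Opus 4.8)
The plan is to first extract the key local consequence of $C$ being a maximum clique, then settle the rigidity of $H$, and finally read off the bound on $q$. Since $G$ is $\mathcal{R}_3$-closed, a subgraph is rigid if and only if it is complete, so $C$ is a maximum clique; as $G$ is not complete, $D\neq\emptyset$. If some $v\in D$ had $e_G(v,C)\geq 3$, then $G[C\cup\{v\}]$ would be rigid by the $0$-extension property (Lemma \ref{ext}), contradicting the maximality of $C$; hence $e_G(v,C)\leq 2$ for every $v\in D$. In particular $\deg_G(v)\leq (|D|-1)+2=|D|+1$ for all $v\in D$, and since $\delta(G)\geq 4$ this forces $|D|\geq 3$. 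Moreover, $G$ is $3$-connected by Lemma \ref{4conn} (applied with $r=1$).

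Next I would show that $H$ is not rigid. Suppose, for a contradiction, that it is; then $G[D]$ is complete, i.e.\ $D$ is a clique, and I split into two cases according to the distribution of the $C$--$D$ edges. If every $v\in D$ satisfies $e_G(v,C)=2$, then a Hamilton cycle of the clique $D$ together with the two edges joining each of its vertices to $C$ realises the hypotheses of Lemma \ref{cycle} with $H=G[C]$ and $k=|D|\geq 3$; the required $\geq 3$ distinct attachment points in $C$ are guaranteed by $3$-connectivity, since otherwise those at most two vertices of $C$ would form a $2$-cut separating $D$ from the (nonempty) remainder of $C$. Thus $G$ contains a rigid spanning subgraph and is itself rigid, a contradiction. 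Otherwise some $v_0\in D$ has $e_G(v_0,C)\leq 1$; as $D$ is a clique this gives $\deg_G(v_0)\leq |D|$, so for each of the $\geq q-1\geq 3$ vertices $c\in C$ non-adjacent to $v_0$ the degree-sum hypothesis yields $\deg_G(c)\geq n+1-|D|=q+1$, whence $e_G(c,D)\geq 2$. These three vertices of $C$, together with the bound $e_G(v,C)\leq 2$ for all $v\in D$, let me apply Lemma \ref{union} with $H_1=G[C]$ and $H_2=G[D]$ to conclude that $G$ is rigid, again a contradiction. Hence $H$ is not rigid, and so $D$ is not a clique.

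Finally, the bound on $q$ follows quickly. Since $D$ is not a clique, there is a non-adjacent pair $v,v'\in D$. Each of them misses both the other and all but at most two vertices of $C$, so $\deg_G(v),\deg_G(v')\leq (|D|-2)+2=|D|=n-q$. Combining this with $\deg_G(v)+\deg_G(v')\geq n+1$ gives $n+1\leq 2(n-q)$, that is $q\leq \frac{n-1}{2}$, as claimed. I expect the middle step to be the main obstacle: one must pick the right gluing lemma, and the case split (whether all vertices of $D$ have exactly two neighbours in $C$, or some has at most one) is exactly what makes Lemma \ref{cycle} and Lemma \ref{union} respectively applicable. The delicate point is that the degree-sum hypothesis is precisely what converts a deficient attachment at a single vertex of $D$ into three richly attached vertices of $C$, which is what Lemma \ref{union} needs.
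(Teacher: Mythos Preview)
Your proof is correct and follows essentially the same approach as the paper: both argue $e_G(v,C)\leq 2$ for $v\in D$, split into the cases ``every $v\in D$ has two neighbours in $C$'' versus ``some $v_0$ has at most one'', and invoke Lemma~\ref{cycle} and Lemma~\ref{union} respectively, before reading off $q\leq\frac{n-1}{2}$ from a non-adjacent pair in $D$.

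The only notable difference is in the second case. The paper runs Lemma~\ref{union} contrapositively: since $G$ is not rigid, at most two vertices of $C$ can have $e(\cdot,D)\geq 2$; combining one of the remaining vertices of $C$ (which has $e(\cdot,D)=1$) with $v_0$ yields a non-adjacent pair of degree sum at most $n$, violating the hypothesis. You instead use the degree-sum hypothesis directly to produce three vertices of $C$ with $e(\cdot,D)\geq 2$, and then Lemma~\ref{union} gives rigidity. These are dual to one another and equally valid; your version is arguably a touch more direct. For the Lemma~\ref{cycle} application you verify the ``$\geq 3$ attachment points'' condition via $3$-connectivity, whereas the paper obtains it from the earlier observation that $e(u,D)\geq 1$ for every $u\in C$; again both work.
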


			\begin{proof}
				By the maximality of $C$
				we have $e(v,C)\leq 2$ for all $v\in D$ and (\ref{3dim}) implies that $e(u,D)\geq 1$ for all $u\in C$.
				
				First suppose that $H$ is complete. Clearly, $|D|\geq 3$ as the minimum degree is at least four. If $e(v,C)=2$ for all $v\in D$, then we can use
				Lemma \ref{cycle} to deduce that $G$ is rigid, a contradiction.
				Thus there is a vertex $v'\in D$ with $e(v',C)\leq 1$. Furthermore, 
				Lemma \ref{union} and the maximality of $H$ imply that there exists
				a subset $A\subseteq C$ with $|A|\leq 2$ such that $e(u,D)= 1$
				for all $u\in C-A$. Let $u'\in C-A$ be a vertex which is not adjacent to $v'$.
				Since $q\geq 4$, such a vertex exists.
				Then we have
				$$\deg(u')+\deg(v')\leq q+(n-q)=n,$$
				a contradiction.
				
				Therefore, $H$ is not complete (equivalently, $H$ is not rigid). Thus $D$ contains a pair of non-adjacent vertices $x$ and $y$. Then (\ref{3dim}) gives
				$$n+1\leq \deg(x)+\deg(y)\leq 2(n-q-2+2)=2n-2q,$$
				which implies $q\leq \frac{n-1}{2}$, as required.
			\end{proof}
			
			\begin{claim}
				\label{claim1}
				$H$ is not isomorphic to any of $W_5$, $B_6$, $C_7^1$ or $G_7^2$. 
			\end{claim}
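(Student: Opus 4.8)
The plan is to assume, for contradiction, that $H$ is isomorphic to one of $W_5,B_6,C_7^1,C_7^2$, and to use the degree hypothesis \eqref{3dim} to pin down the degrees of $G$ tightly enough that the rank-contribution estimate of Lemma~\ref{lemma:rc} forces $r_3(G)=3n-6$, contradicting the non-rigidity of $G$. First I would read off the structure. Writing $X$ for the graph with $H\cong X$, we have $|D|=|V(X)|\in\{5,6,7\}$, and for every $v\in D$ one has $\deg_G(v)=\deg_H(v)+e(v,C)$ with $e(v,C)\le 2$ by the maximality of $C$. For a non-adjacent pair $x,y\in D$ the hypothesis gives $\deg_G(x)+\deg_G(y)\ge n+1=q+|D|+1$. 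In each of the four graphs a non-adjacent pair satisfies $\deg_H(x)+\deg_H(y)=|D|-1$ (the diagonal rim pairs in $W_5$, the removed $P_3$-edges in $B_6$, the removed cycle edges in $C_7^1,C_7^2$), so the inequality collapses to $e(x,C)+e(y,C)\ge q$; together with $e(\cdot,C)\le 2$ and $q\ge 4$ (Claim~\ref{mrg}) this forces $q=4$. Hence $C=K_4$, the graph $G$ is $K_5$-free, $n=|D|+4\in\{9,10,11\}$, and $e(v,C)=2$ for every $v\in D$ lying in a non-adjacent pair of $H$.

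Next I would convert this into lower bounds on all degrees of $G$. Every vertex of $D$ except the hub of $W_5$ lies in a non-adjacent pair, so it has exactly two neighbours in $C$ and thus $\deg_G(v)=\deg_H(v)+2\ge 5$ — in fact $\ge 6$ for $X\in\{C_7^1,C_7^2\}$, where $H$ is $4$-regular — while the hub of $W_5$ satisfies $\deg_G\ge\deg_H=4$. For a vertex $c\in C$, applying \eqref{3dim} to $c$ and any non-neighbour $z\in D$ gives $\deg_G(c)\ge n+1-\deg_G(z)$; since $\deg_G(z)\le 6$ for all $z\in D$, a short case check (using, in the borderline case $X=W_5$, that a $c$ with at most one neighbour in $D$ misses at least three rim vertices, each of degree $5$) yields $\deg_G(c)\ge 5$ for every $c\in C$. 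In summary, $G$ has minimum degree at least $4$, with the only possible degree-$4$ vertex being the hub when $X=W_5$, every remaining vertex of degree at least $5$, and every vertex of degree at least $6$ when $n=11$.

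Finally, since $G$ is $\mathcal{R}_3$-closed, $K_5$-free and has $\delta(G)\ge 4$, Lemma~\ref{lemma:rc} applies to every vertex, and Lemmas~\ref{lemma:rc_sum} and~\ref{lemma:rc_rcstar} give
\[
r_3(G)=\sum_{v\in V}\rc_3(G,v)\ \ge\ \sum_{v\in V}\Big(4-\tfrac{10}{\deg_G(v)+1}\Big).
\]
Substituting the degree bounds from the previous step, the right-hand side is at least $2+8\cdot\tfrac{7}{3}=\tfrac{62}{3}$ for $n=9$, at least $6\cdot\tfrac{7}{3}+4\cdot\tfrac{18}{7}$ for $n=10$, and at least $11\cdot\tfrac{18}{7}$ for $n=11$; in every case this strictly exceeds $3n-7$. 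As $r_3(G)$ is an integer with $r_3(G)\le 3n-6$, it follows that $r_3(G)=3n-6$, so $G$ is rigid, the desired contradiction. I expect the main obstacle to lie in the degree bookkeeping of the first two steps rather than in the final estimate: one must verify $q=4$ and the per-vertex lower bounds across all four graphs, and the estimate for $X=W_5$ is tight (the value $\tfrac{62}{3}$ only just clears $20$), so that case leaves no slack and the degree-$4$ hub must be handled with care.
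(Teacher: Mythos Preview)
Your argument is essentially correct and gives a genuinely different proof from the paper's, but there is one slip to fix: the degree sum of a non-adjacent pair in $H$ is $|D|+1$, not $|D|-1$ (rim pairs in $W_5$: $3+3=6=5+1$; path-edge pairs in $B_6$: $3+4=7=6+1$; any pair in $C_7^1,C_7^2$: $4+4=8=7+1$). With $|D|+1$ the derivation $e(x,C)+e(y,C)\ge q$ goes through exactly as you wrote; with $|D|-1$ it would give $\ge q+2$, which is inconsistent with the rest of your text. Once this is corrected, your degree bookkeeping and the final rank-contribution estimate are all accurate, including the delicate $W_5$ case where $\tfrac{62}{3}$ only just exceeds $3n-7=20$.

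The paper's proof is structurally quite different. Instead of forcing $q=4$ and invoking Lemma~\ref{lemma:rc}, it works case by case on $X\in\{W_5,B_6,C_7^1,C_7^2\}$, uses edge-counting between $C$ and $D$ to determine (almost) all of the values $e(v,C)$, and then applies Lemma~\ref{cycle} (attaching a cycle with two pendant edges per vertex to a rigid graph) to exhibit a rigid subgraph strictly larger than $G[C]$, contradicting the maximality of $C$. Your route is more uniform and more in the spirit of Claim~\ref{mrg}: the same tool (Lemma~\ref{lemma:rc}) dispatches all four cases at once after a single numerical observation pins $q$ to $4$ and hence makes $G$ $K_5$-free. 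The paper's route, on the other hand, avoids the rank-contribution machinery here and instead leverages the explicit Hamiltonicity of (vertex-deleted subgraphs of) the four exceptional graphs; it is more constructive in that it produces a concrete larger rigid subgraph, but it requires several ad hoc sub-cases (including a separate counting argument for each of $W_5$, $B_6$, and $C_7^1/C_7^2$). Both approaches rely on $e(v,C)\le 2$ and the degree-sum hypothesis in the same way; the divergence is in what one does once $q$ and the cross-degrees are controlled.
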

			
			\begin{proof}
				In the proof we shall use the fact that $C_7^1$, $C_7^2$, and every vertex-deleted
				subgraph of $B_6$ has a cycle that contains all vertices.
				We shall use that $e(x,C)\leq 2$ for all $x\in D$ by the
				maximality of $G'$ and Lemma \ref{ext}.
				
				First suppose that $H=W_5$. Then $G'=K_4$.
				Let $x_i$ ($1\leq i\leq 4$) be the four vertices of degree three in $H$
				and let $X=\{x_1,x_2,x_3,x_4\}$.
				If $e(x_i,C)\leq 1$ (resp. $e(x_i,C)=0$)
				for some $1\leq i\leq 4$, then (\ref{3dim}) implies that
				$e(v,D)\geq 3$ (resp. $e(v,D)\geq 4$) for
				at least three vertices $v\in C$, which gives $e(C,D)\geq 10$.
				It is impossible, since $e(x,C)\leq 2$ for all $x\in D$.
				Thus $e(x_i,C)=2$ for $1\leq i\leq 4$.
				A similar argument gives that $|N_G(X)\cap C|\geq 3$.
				Then $G[C\cup X]$ 
				is rigid by Lemma \ref{cycle}, contradicting the
				maximality of $G'$.
				
				Next suppose that $H=B_6$. 
				Then $4\leq |C|\leq 5$.
				Note that $e(v,D)\geq 2$ for each $v\in C$, since otherwise $\deg(v)+\deg(w)\leq |C|+6=|V|$ for some $w\in D$ with $uw\notin E$. 
				Hence we have $N_G(D)=C$. Let $V(H)=\{y_1,y_2,\ldots,y_6\}$, where $\deg_H(y_i)=3$ for $1\leq i \leq 2$ and $\deg_H(y_i)=4$ for $3\leq i \leq 6$. 
				If 
				$e(y_i,C)=2$ for $1\leq i \leq 6$,
				then $G[C\cup D]$ is rigid, by Lemma \ref{cycle}, a contradiction.
				So we may assume that $e(y_i,C)\leq 1$ for some $i$, $1\leq i \leq 6$, which implies that $\deg(y_i)\leq 5$. By (\ref{3dim}), there exist at least $|C|-1$ vertices in $C$, each of which has degree at least $|C|+2$. Then we have $e(C,D)=\sum_{v\in C}e(v,D)\geq (|C|+2-|C|+1)(|C|-1)+2=3|C|-1\geq 11$. Moreover, $e(D,C)=\sum_{v\in D}e(v,C)\leq 2(|D|-1)+1=11$. Thus we must have $|C|=4$, $e(y_i,C)= 1$ for some $i$, $3\leq i \leq 6$, and $e(y_j,C)=2$ for each $j\not= i$, $1\leq j \leq 6$. 
				Then $G[C\cup (D-\{y_i\})]$ is rigid by Lemma \ref{cycle}, a contradiction.
				
				Finally, suppose that $H$ is isomorphic to $C_7^1$ or $G_7^2$.
				Then $4\leq |C| \leq 6$. Let $V(H)=\{z_1,z_2,\ldots,z_7\}$. We have $\deg_H(z_i)=4$ for $1\leq i \leq 7$. For each vertex $v\in C$ we must have $e(v,D)\geq 3$, for otherwise $\deg(v)+\deg(w)\leq |C|+1+6=|V|$ for some $w\in D$ with $vw\notin E$. 
				Thus $N_G(D)=C$. If $e(z_i,C)\leq 1$ for some $i$, $1\leq i \leq 7$, then by (\ref{3dim}), there exist at least $|C|-1$ vertices in $C$ with degree at least $|C|+3$ in $G$. Hence  $e(C,D)=\sum_{v\in C}e(v,D)\geq (|C|+3-|C|+1)(|C|-1)+3=4|C|-1\geq 15$. However, $e(D,C)=\sum_{v\in D}e(v,C)\leq 2(|D|-1)+1=13$. It shows that $e(z_i,C)=2$ for
				$1\leq i\leq 7$. 
				Now we can use Lemma \ref{cycle} to deduce that $G[C\cup D]$ is rigid,
				a contradiction.
			\end{proof}

			By using that $q\geq 4$ and
			$e(v,C)\leq 2$ for all $v\in D$,
			we obtain that for all pairs $x,y$ of non-adjacent vertices of $H$
			$$\deg_H(x)+\deg_H(y)\geq \deg(x)-2+\deg(y)-2\geq n+1-4\geq |D|+1$$
			holds.
			Thus $H$ satisfies (\ref{3dim}), and hence,
			by the minimality of $G$, 
			$H$ is rigid, a contradiction.
		\end{proof}
		
		\section{On the rigidity of dense Erdős-Rényi random graphs}
		\label{sec:appl:random}
		
		In this section, we apply Theorem \ref{thm:delta:n/2+d} to settle a question raised by Peled and Peleg  on the rigidity of Erdős-Rényi random graphs. %
		This topic  has seen increasing interest in recent years, and one of its central
		 framing questions is: 
		\begin{quote}
		For which pairs $d=d(n)$ and $p=p(n)$ is the
		 random graph $G\sim G(n,p)$ asymptotically almost surely rigid in $\R^d$?
		\end{quote}
		This was first studied for fixed dimension $d=2$ \cite{JSS}. Subsequently, Lew, Nevo, Peled, and Raz \cite{LNPR} obtained an exact hitting time result for higher (but still fixed) dimensions $d\geq 1$, which was later extended to all $d(n)\leq c\log n/\log\log n$ by Krivelevich, Lew, and Michaeli \cite{KLM1}. 
		For $p = \omega\left(\log n/n \right)$, the following conjecture from \cite{KLM1} would provide an asymptotically tight bound on $d(n)$:
		\begin{conj} \cite[Conjecture 1.11]{KLM1}\label{conj:random}
			For every $p = \omega\left(\log n/n \right)$, the random graph $G \sim G(n,p)$ is a.a.s.\ $d$-rigid for $d=(1-o(1))(1-\sqrt{1-p})n$.
		\end{conj}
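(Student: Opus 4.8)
The plan is to show that the generic $d$-rank of $G\sim G(n,p)$ attains its maximum value $dn-\binom{d+1}{2}$ asymptotically almost surely, up to the largest $d$ that edge counting permits. The starting observation is that solving $p\binom{n}{2}\geq dn-\binom{d+1}{2}$ for $d$ yields precisely the threshold $d\leq(1-\sqrt{1-p})\,n$; hence Conjecture \ref{conj:random} asserts that for dense random graphs the trivial edge-count obstruction is the \emph{only} obstruction, i.e.\ the rigidity matroid is as large as it could possibly be. So I would attack the statement by lower-bounding $r_d(G)$ directly, via the rank-contribution identity $r_d(G)=\sum_{v\in V}\rc_d(G,v)$ of Lemma \ref{lemma:rc_sum}, and trying to push this sum up to $dn-\binom{d+1}{2}$.

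First I would record the relevant concentration: for $p=\omega(\log n/n)$ a Chernoff bound shows that a.a.s.\ every vertex has degree $(1\pm o(1))pn$, every non-adjacent pair has $(1\pm o(1))p^2n$ common neighbours, and every induced subgraph on $\varepsilon n$ vertices inherits the expected density. When $p>\tfrac12$ this already extracts something from the minimum-degree engine: since a.a.s.\ $\delta(G)\geq pn-o(n)$, Theorem \ref{thm:delta:n/2+d} certifies $d$-rigidity for all $d\leq(p-\tfrac12)n-o(n)$. But $(p-\tfrac12)$ is strictly below the target constant $1-\sqrt{1-p}$ for every $p$ (and is vacuous for $p\leq\tfrac12$), so the essential step is to amplify rigidity beyond what minimum degree alone delivers.

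For the amplification I would use the dimension-climbing machinery of Subsection \ref{subsec:rupd}: by \eqref{eq:rup:n>d1}, if $G$ is rigid in $\R^{d-1}$ then $\rup_d(G)\geq d$ with equality if and only if $G$ is rigid in $\R^{d}$, and the upper bounds on $\rup_d(G)$ given by Lemma \ref{lem:dof:easybound:v2} and the sharper common-neighbourhood estimates of Lemma \ref{lem:dofup:s1s2s3} can be fed the random-graph statistics above to certify $\rup_d(G)=d$ and thereby raise the attainable dimension one step at a time. This is exactly the route by which Theorem \ref{thm:random:1/2} reaches $\tfrac{7}{32}n$ for $p=\tfrac12$, and I would re-optimise the same scheme as a function of $p$, combining it with the rank-contribution bounds of Lemmas \ref{lem:rc:geq:d} and \ref{lem:rc:tbound} for the bulk of the vertices.

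The hard part will be that the presently available lower bounds on rank contribution are lossy by a factor of $1/p$: substituting $\deg(v)\approx pn$ into Lemma \ref{lem:rc:geq:d} or Lemma \ref{lem:rc:tbound} produces a total rank deficit of order $\binom{d+1}{2}/p$ rather than the required $\binom{d+1}{2}$, which certifies rigidity only up to $d=O(\sqrt{pn})$ and never reaches the linear threshold. This loss arises because one estimates the true contribution $\rc_d(G,v)$ by its submodular surrogate $\rc^*_d(G,v)$, and the surrogate badly underestimates $\rc_d$ in the dense regime. Closing the gap to the optimal constant $1-\sqrt{1-p}$ for all $p$ — the content of the conjecture beyond Theorem \ref{thm:random:1/2} — appears to require controlling $\rc_d(G,v)$ itself, i.e.\ a bootstrapping statement that a uniformly random initial segment $G[T_v^\pi]$ is already rigid in $\R^d$ with high probability, so that late vertices each contribute the full $d$. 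Proving this self-reinforcing rigidity of random initial segments is, in my view, where the difficulty concentrates, and is the step I expect to be the main obstacle.
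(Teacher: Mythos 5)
This statement is a conjecture (Krivelevich--Lew--Michaeli, cited as \cite[Conjecture 1.11]{KLM1}); the paper does not prove it and explicitly treats it as open, offering only partial progress: Theorem \ref{thm:random:1/2} establishes $d\sim\frac{7}{32}n$ for $p=\frac12$, well below the conjectured $\big(1-\sqrt{1/2}\big)n\approx 0.2929\,n$. Your proposal likewise does not prove the statement, and to your credit you say so: the entire load-bearing step --- showing that a uniformly random initial segment $G[T_v^\pi]$ is a.a.s.\ already $d$-rigid so that late vertices contribute the full $d$ to $\sum_v \rc_d(G,v)$ --- is left as an acknowledged obstacle, not an argument. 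Your diagnosis of why the existing machinery stalls is accurate and worth keeping: replacing $\rc_d$ by the submodular surrogate $\rc^*_d$ and plugging $\deg(v)\approx pn$ into Lemma \ref{lem:rc:geq:d} or Lemma \ref{lem:rc:tbound} incurs a total deficit of order $\binom{d+1}{2}/p$, which caps the method at $d=O(\sqrt{pn})$ and cannot reach any linear threshold. So as a proof attempt this has a genuine, correctly located gap; as a research programme it is a reasonable assessment of the conjecture's difficulty.

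One concrete misstatement needs correcting: you assert that the dimension-climbing machinery of Subsection \ref{subsec:rupd} (the bounds on $\rup_d$ from Lemmas \ref{lem:dof:easybound:v2} and \ref{lem:dofup:s1s2s3}) ``is exactly the route by which Theorem \ref{thm:random:1/2} reaches $\tfrac{7}{32}n$.'' It is not. The paper's proof of Theorem \ref{thm:random:1/2} uses neither $\rup_d$ nor rank contributions: it contracts a perfect matching of vertex pairs one at a time, observes that the fully contracted graph is distributed as $G\big(\frac n2,\frac{15}{16}\big)$ and hence a.a.s.\ has minimum degree exceeding $\frac n4+d$, applies the deterministic minimum-degree result Theorem \ref{thm:delta:n/2+d} to that graph, and then un-contracts via $d$-dimensional spider splitting (Whiteley's vertex splitting theorem), checking by Hoeffding plus a union bound that each pair has at least $d$ common neighbours at the moment it is split. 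The constant $\frac{7}{32}=\frac{15}{16}\cdot\frac12-\frac14$ comes from this contraction bookkeeping, and ``re-optimising the same scheme as a function of $p$'' is precisely what Theorem \ref{thm:random:any:p} does --- but that scheme inherently loses a constant factor against $1-\sqrt{1-p}$ and cannot close the conjecture either. So if you pursue this, be aware that both routes you invoke (rank contributions and the contraction/splitting argument) are known to fall short of the conjectured constant, and the missing ingredient you identify --- a.a.s.\ rigidity of random initial segments at the optimal dimension --- is essentially a self-referential form of the conjecture itself, which is why it cannot be assumed as a lemma without a new idea.
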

		Peled and Peleg verified this conjecture for $p=o(n^{-1/2})$ in a recent paper \cite{PP}.
		However, less is known in denser regimes.
		In particular, Conjecture \ref{conj:random} would imply that $G(n,1/2)$ is rigid for some $d(n)\sim \frac{2-\sqrt{2}}{2}n$.
		In this direction, Peled and Peleg asked whether there exists a constant $\varepsilon>0$ such that $G(n,1/2)$ is a.a.s.\ rigid in $\R^{\varepsilon n}$ (\cite[Section 4(3)]{PP}). In Theorem \ref{thm:random:1/2}, we answer their question affirmatively by showing that every $0<\varepsilon < 7/32=0.21875$ suffices. 
		Given that $\frac{2-\sqrt{2}}{2}\approx 0.2928$ is an upper bound on $\varepsilon$, this provides a reasonable estimate for the rigidity of $G(n,1/2)$. 
		We now recall and prove Theorem \ref{thm:random:1/2}.
		
		\random*
		
		\begin{proof}
			 For convenience, we assume that $n$ is even. Let $v_1,\dots, v_{n/2}, u_1,\dots u_{n/2}$ denote the vertices of $G$. Define $G_0=G$ and $G_i=G_{i-1}/\{u_i,v_i\}$ for $1\leq i\leq \frac n 2$. 
			Then $G_{\frac{n}{2}}$ is a graph with $\frac{n}{2}$ vertices in which the $i$th and $j$th vertices are adjacent if and only if at least one of the edges $\{u_iu_j, u_iv_j,v_iu_j, v_iv_j\}$ is present in $G$. 	Thus  $G_{\frac{n}{2}} \sim G(\frac n 2, 15/16)$.
			A.a.s., $G_{\frac{n}{2}}$ has minimum degree greater than $\frac{n}{4}+d$, see, e.g, \cite[Corollary 3.4]{BBrandom}. Hence, by Theorem \ref{thm:delta:n/2+d}, $G_{\frac{n}{2}}$ is $d$-rigid. %
			We claim that, a.a.s., for every $0\leq i\leq  \frac n 2 -1$, the graph $G_{i}$ can be obtained from $G_{i+1}$ by a $d$-dimensional  spider splitting operation and the addition of new edges. Since this operation preserves rigidity in $\R^d$, this implies that $G_i$ is rigid for every $0\leq i\leq \frac n 2$, thus completing the proof of the theorem. 
			
			To prove our claim, it remains to show that a.a.s.\ $|N_{G_i}(u_{i+1})\cap N_{G_i}(v_{i+1})|\geq d$ holds for every $0\leq i\leq \frac n 2 -1$. This will follow by straightforward probabilistic arguments.
			For $0\leq i\leq \frac n 2 -1$, let $X_i$ denote the size of $N_{G_i}(u_{i+1})\cap N_{G_i}(v_{i+1})$. The graph $G_i$ has $n-i$ vertices and $i$ of them represent contracted vertex pairs from $G_0$. Hence, for each $i$, we have $X_i=Y_i+Z_i$ where $Y_i\sim {\rm Bin}\big(i, \frac{9}{16}\big)$ and $Z_i\sim {\rm Bin}\big(n-2i-2,\frac{1}{4}\big)$ are independent variables. In particular,
            $$\E (X_i)= \frac{9i}{16}+\frac{n-2i-2}{4}\geq \frac{1}{4}(n-2)>d+\frac{n}{64}.$$
             Using Hoeffding's inequality, we obtain that $\Prob(X_i<d)\leq e^{-\Omega(n)}$ for every $i$. Hence, by the union bound, the probability that there is some $0\leq i\leq \frac n 2 -1$ with $X_i< d$ is $o(1)$.
		\end{proof}
		
		Prior to this result, the best known lower bound for $d=d(n)$ was $d = \Omega(n/\log n)$, which followed as a special case of \cite[Theorem 1.10]{KLM1}. 
		With straightforward modifications, the proof of Theorem \ref{thm:random:1/2} yields the following more general result.
		
		\begin{theorem}\label{thm:random:any:p}
			For every constant $p\in \R$ with $0< p<1$, there exists $\varepsilon >0$ such that  $G\sim G(n,p)$ is a.a.s.\ $(\varepsilon n)$-rigid.
		\end{theorem}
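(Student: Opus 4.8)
The plan is to follow the proof of Theorem \ref{thm:random:1/2} almost verbatim, replacing the contraction of \emph{pairs} of vertices by the contraction of \emph{groups of size $k$}, where $k=k(p)$ is a constant chosen large enough depending on $p$. The reason a modification is needed at all is that for pair contraction the reduced graph is distributed as $G(n/2,\,1-(1-p)^4)$, and its density $1-(1-p)^4$ drops below $1/2$ once $p$ is small; in that regime the minimum-degree hypothesis of Theorem \ref{thm:delta:n/2+d} can no longer be met. Contracting groups of size $k$ instead yields a reduced graph on $m=n/k$ vertices distributed as $G(m,q_k)$ with $q_k=1-(1-p)^{k^2}$, and since $q_k\to 1$ as $k\to\infty$ we may fix $k=k(p)$ so that $q_k>1/2$.

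First I would assume for convenience that $k\mid n$ (the general case needs only a cosmetic adjustment of the group sizes), partition $V(G)$ into $m=n/k$ groups of size $k$, and contract each group to a single vertex, performing the contractions one pair at a time so that the whole procedure is a sequence of $n-m\le n$ elementary contractions of (possibly non-adjacent) vertex pairs. Reversing this sequence exhibits $G$ as the last term of a chain of graphs, each obtained from its predecessor by a $d$-dimensional spider splitting followed by the addition of the new edges. The base of the chain is the reduced graph $G_{\mathrm{red}}\sim G(m,q_k)$: a.a.s.\ its minimum degree is $q_k(m-1)-O(\sqrt{m\log m})$, which exceeds $\tfrac{m}{2}+d-1$ as soon as $d=\varepsilon n$ with $\varepsilon<(q_k-\tfrac12)/k$, so Theorem \ref{thm:delta:n/2+d} makes $G_{\mathrm{red}}$ $d$-rigid. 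Since spider splitting preserves $d$-rigidity (Whiteley's vertex splitting theorem) and adding edges to a rigid graph keeps it rigid, it then suffices to verify that every splitting step is legal, i.e.\ that at each step the two vertices being un-merged have at least $d$ common neighbours.

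The latter is a routine Hoeffding estimate. At any stage of the reverse process the current graph has between $m$ and $n$ vertices, and splitting a super-vertex representing a set $S$ of original vertices into parts $S_1\sqcup S_2$ (both nonempty) makes another current super-vertex $T$ a common neighbour precisely when $G$ contains an edge between $S_1$ and $T$ and an edge between $S_2$ and $T$; as these two events depend on disjoint edge sets, each has probability at least $p$, so $T$ is a common neighbour with probability at least $p^2$. Hence the number $X$ of common neighbours is a sum of independent $0/1$ variables with $\E(X)\ge p^2(m-1)=\Theta(n)$. Imposing also $\varepsilon<p^2/k$, Hoeffding's inequality gives $\Prob(X<d)\le e^{-\Omega(n)}$ for each of the at most $n$ splitting steps, and a union bound shows that a.a.s.\ every step is legal. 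Choosing $\varepsilon=\tfrac12\min\{\,q_k-\tfrac12,\ p^2\,\}/k>0$ then makes $G\sim G(n,p)$ a.a.s.\ $(\varepsilon n)$-rigid.

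The only genuinely new point compared with Theorem \ref{thm:random:1/2} is the choice of group size: one must check that $q_k=1-(1-p)^{k^2}$ can be pushed past $1/2$ while the per-step common-neighbour expectation $\Theta(n)$ stays comfortably above $d=\varepsilon n$, so that both the base-case degree bound and all the splitting conditions are satisfied by a single positive $\varepsilon$. Everything else---organising the group contraction as a sequence of spider splittings and the Hoeffding/union-bound estimates---is a direct transcription of the $p=1/2$ argument. I expect the only mild bookkeeping obstacle to be confirming that the common-neighbour count stays $\Theta(n)$ uniformly across all stages, including the first few splits, when the graph is smallest (on roughly $n/k$ vertices) but the groups are largest.
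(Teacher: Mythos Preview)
Your proposal is correct and is precisely the kind of ``straightforward modification'' the paper alludes to (the paper does not spell out a proof of Theorem~\ref{thm:random:any:p}). Replacing pair contraction by contraction of groups of size $k$ so that the reduced graph has edge density $q_k=1-(1-p)^{k^2}>1/2$ is the natural fix, and your Hoeffding/union-bound verification of the spider-splitting condition goes through exactly as in the $p=1/2$ case; the ``mild bookkeeping obstacle'' you flag is not an obstacle at all, since at every stage there are at least $m-1=n/k-1$ other super-vertices, each a common neighbour independently with probability at least $p^2$.
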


		\section{Concluding remarks}	\label{sec:conluding}

        \subsection{Edge count for degree sum conditions}
        It is clear that $\delta(G)\geq \frac{n+d}{2}-1$ implies
		$|E|\geq \frac{n(n+d-2)}{4}$. 
		We show that that the degree sum condition $\deg(u)+\deg(v)\geq n+d-2$
		gives rise to the same bound.
		
		\begin{lemma}
			\label{ecount}
			Let $G=(V,E)$ be a graph.
			If
			\begin{equation}
				\label{eq2}
				\deg(u)+\deg(v)\geq n+d-2
			\end{equation}
			for all pairs of non-adjacent vertices in $G$, then
			$|E|\geq \frac{n(n+d-2)}{4}$.
		\end{lemma}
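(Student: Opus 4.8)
The plan is to pass to the complement graph $\bar G$, where the hypothesis turns into a clean upper bound on the degree-sums of edges, and then to combine this with a Cauchy--Schwarz estimate. The reason for working in the complement is structural: there the quantity $\sum_v \deg(v)^2$ enters with exactly the sign that makes Cauchy--Schwarz useful. A direct attempt in $G$ runs the wrong way (the $\sum_v \deg(v)^2$ term would have to be bounded from above, whereas Cauchy--Schwarz only bounds it from below), and a naive vertex-by-vertex count anchored at a minimum-degree vertex falls short of the target by a term of order $d^2$. So a global argument in the complement is the natural route.

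First I would record the translation of the hypothesis. Since $\deg_G(v) = n - 1 - \deg_{\bar G}(v)$, and the non-adjacent pairs of $G$ are precisely the edges of $\bar G$, condition \eqref{eq2} is equivalent to
\[
\deg_{\bar G}(u) + \deg_{\bar G}(v) \le n - d \qquad \text{for every edge } uv \in E(\bar G).
\]
I may assume $G$ is not complete, since otherwise $|E| = \binom{n}{2}$ and the claimed bound is immediate (using $n \ge d$); hence $\bar G$ has at least one edge.

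Next comes the core estimate. Summing the displayed inequality over all edges of $\bar G$ and using the identity $\sum_{uv \in E(\bar G)} \bigl(\deg_{\bar G}(u) + \deg_{\bar G}(v)\bigr) = \sum_{v \in V} \deg_{\bar G}(v)^2$ (each vertex $v$ is an endpoint of $\deg_{\bar G}(v)$ edges and contributes $\deg_{\bar G}(v)$ to each), I get
\[
\sum_{v \in V} \deg_{\bar G}(v)^2 \le (n - d)\,|E(\bar G)|.
\]
On the other hand, Cauchy--Schwarz together with $\sum_{v} \deg_{\bar G}(v) = 2\,|E(\bar G)|$ yields
\[
\sum_{v \in V} \deg_{\bar G}(v)^2 \ge \frac{\bigl(\sum_{v} \deg_{\bar G}(v)\bigr)^2}{n} = \frac{4\,|E(\bar G)|^2}{n}.
\]

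Combining the two bounds and dividing by $|E(\bar G)| > 0$ gives $|E(\bar G)| \le \tfrac{n(n-d)}{4}$. Translating back to $G$,
\[
|E| = \binom{n}{2} - |E(\bar G)| \ge \frac{n(n-1)}{2} - \frac{n(n-d)}{4} = \frac{n(n + d - 2)}{4},
\]
which is the desired inequality. I do not expect a serious computational obstacle here; the only genuine idea is the decision to work in the complement so that the edge condition bounds $\sum_v \deg(v)^2$ from \emph{above}, matching the direction supplied by Cauchy--Schwarz. The sole boundary case is the complete graph, which is disposed of at the very start.
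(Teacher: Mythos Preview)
Your proof is correct and essentially identical to the paper's: both sum the degree-sum hypothesis over the non-adjacent pairs, rewrite the result in terms of $\sum_v \deg(v)^2$, and apply Cauchy--Schwarz. The only cosmetic difference is that the paper stays in $G$ (where $\sum_v \deg_G(v)^2$ enters with a minus sign, so Cauchy--Schwarz still points the right way---contrary to your aside that a direct approach in $G$ fails), whereas you pass to $\bar G$; the two computations are reparametrisations of one another via $\deg_{\bar G}(v)=n-1-\deg_G(v)$.
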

		
		\begin{proof}
			We shall use the following well-known inequality, which holds for
			any $n$-tuple of real numbers $a_1,a_2,\dots,a_n$:
			\begin{equation}
				\label{ineq}
				\frac{(\sum_{i=1}^n a_i)^2}{n}\leq \sum_{i=1}^n a_i^2
			\end{equation}
			First we consider the non-edges of $G$ and use (\ref{eq2}) to deduce that
			$$\frac{n(n-1)-2|E|}{2}(n+d-2)\leq \sum_{u,v\in V, uv\notin E} \deg(u)+\deg(v)=$$
			$$=\sum_{v\in V} (n-1-\deg(v)) \deg(v)= 2|E|(n-1) - \sum_{v\in V} \deg(v)^2$$
			holds. By applying (\ref{ineq}), we can upper bound the RHS by
			$$2|E|(n-1)-\frac{(\sum_v \deg(v))^2}{n}=\sum_{v\in V}(n-1-\frac{2|E|}{n})\deg(v)=2|E|(n-1-\frac{2|E|}{n})=2|E|(\frac{n(n-1)-2|E|}{n}).$$
			By comparing the two sides we get
			$|E|\geq \frac{n(n+d-2)}{4}$, as claimed.
		\end{proof}

        \subsection{Conditions for global rigidity}
		
			A framework $(G,p)$ in $\R^d$ is {\it globally rigid} if every $d$-dimensional framework $(G,q)$ in which corresponding edge lengths are the same as in $(G,p)$, is congruent to $(G,p)$. As in the case of rigidity,
		the global rigidity of a generic framework is determined by its graph, for every fixed $d\geq 1$.
		Thus we call $G$ {\it globally $d$-rigid} if every (or equivalently, if some) generic $d$-dimensional framework $(G,p)$ 
        is globally rigid.
        We refer the reader to \cite{JW} for more details concerning
	globally rigid graphs and frameworks.
		
        The following result shows that
		whenever a minimum degree or degree sum condition implies rigidity 
		in $\R^{d}$, it also
		implies global rigidity in $\R^{d-1}$.
		
		\begin{theorem}
			\label{theorem:dimensiondropping}
			\cite{Jext}
			Let $G$ be a graph and $d \geq 2$. If $G$ is rigid in $\R^{d}$, then $G$ is globally rigid 
            in $\R^{d-1}$.
		\end{theorem}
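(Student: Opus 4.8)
The plan is to prove the statement through the stress-matrix characterisation of generic global rigidity, using the single extra dimension supplied by the hypothesis to manufacture a stress of maximal rank. Write $n=|V(G)|$. Complete graphs on at most $d$ vertices are globally rigid in $\R^{d-1}$, so I may assume $n\ge d+1$ and that $G$ is not complete. I would recall two standard facts (see \cite{JW}): Connelly's sufficient condition, that a generic framework $(G,\bar p)$ in $\R^{d-1}$ is globally rigid as soon as it carries an equilibrium stress $\omega$ whose $n\times n$ stress matrix $\Omega_\omega$ has rank $n-d$ (the largest value possible, since every $\Omega_\omega$ annihilates the $d$-dimensional space $L=\langle \bar p^{(1)},\dots,\bar p^{(d-1)},\mathbf 1\rangle$ spanned by the coordinate vectors and the all-ones vector); and that the maximal stress-matrix rank is a generic invariant. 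Since ``$G$ has a realisation in $\R^{d-1}$ with a rank-$(n-d)$ stress'' is moreover a Zariski-open condition on the realisation, it suffices to exhibit \emph{one} realisation carrying such a stress. Thus the goal reduces to producing a self-stress whose stress matrix has rank exactly $n-d$.

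The heart of the argument is a dimension count that feeds the hypothesis into the stress space in $\R^{d-1}$. Choose $(\bar p,z)\in(\R^d)^n$ generic, so that $\bar p$ is generic in $\R^{d-1}$ and, by $d$-rigidity, the framework $(\bar p,z)$ has rank $r_d(G)=dn-\binom{d+1}{2}$. Let $S_{d-1}$ be the space of self-stresses of $(G,\bar p)$ and consider the linear map $\psi\colon S_{d-1}\to\R^n$, $\psi(\omega)=\Omega_\omega z$. A stress $\omega$ lies in $\ker\psi$ exactly when it also balances in the $z$-direction, i.e.\ when $\omega$ is a self-stress of the lifted framework $(\bar p,z)$ in $\R^d$; hence $\dim\ker\psi=|E|-r_d(G)$ and
\[
\dim\operatorname{im}\psi=\big(|E|-r_{d-1}(G)\big)-\big(|E|-r_d(G)\big)=\Big(dn-\tbinom{d+1}{2}\Big)-r_{d-1}(G).
\]
Because $\Omega_\omega$ is symmetric and kills $L$, its image lies in $L^{\perp}$, so $\operatorname{im}\psi\subseteq L^{\perp}$, a space of dimension $n-d$. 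Comparing this with the universal bound $r_{d-1}(G)\le (d-1)n-\binom{d}{2}$ forces equality $r_{d-1}(G)=(d-1)n-\binom{d}{2}$ (so, as a by-product, $G$ is rigid in $\R^{d-1}$ as well) and, crucially, $\operatorname{im}\psi=L^{\perp}$. In words, as $\omega$ ranges over $S_{d-1}$ the vectors $\Omega_\omega z$ sweep out all of $L^{\perp}$; in particular the common kernel $\bigcap_{\omega}\ker\Omega_\omega$ equals $L$.

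It then remains to upgrade this spanning statement to the existence of a \emph{single} stress matrix of rank $n-d$, and this is the step I expect to be the main obstacle. For an arbitrary linear family of symmetric matrices a trivial common kernel does \emph{not} guarantee a nondegenerate member: one can build families all of whose members are singular yet with no common kernel vector. Hence genericity of $\bar p$ must enter here in an essential way. The route I would take is to use the reduction from the first paragraph—it is enough to verify the existence of the stress at one configuration—and to take as that configuration the degenerate (flat) realisation obtained by projecting a generic infinitesimally rigid $\R^d$-framework into a hyperplane: its $n-d$ non-trivial out-of-plane infinitesimal flexes are precisely dual to the stress one needs, and $d$-rigidity is exactly what guarantees there are no more than these. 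Making this duality precise—equivalently, showing directly that for generic $\bar p$ a generic element of $S_{d-1}$ already attains rank $n-d$, ruling out the all-singular pathology—is the technical core, and is where I would spend the effort. I would note that for $d\le 3$ one may instead conclude from the combinatorial characterisations of global rigidity available in those low dimensions, since rigidity in $\R^d$ yields the relevant connectivity and redundancy conditions, but that the general case seems to require the stress-matrix argument above.
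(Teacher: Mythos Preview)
The paper does not prove Theorem~\ref{theorem:dimensiondropping}; it is simply quoted from \cite{Jext}, so there is no in-paper argument to compare against. That said, your dimension count is correct and elegant: the surjectivity of $\psi$ onto $L^{\perp}$ really does force the common kernel $K=\bigcap_{\omega}\ker\Omega_\omega$ to equal $L$, because every $\Omega_\omega$ is symmetric with image contained in $K^{\perp}$, whence $\dim\operatorname{im}\psi\le\dim K^{\perp}$ and so $\dim K\le d$. You also correctly recover $(d-1)$-rigidity of $G$ along the way.

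The gap you flag is genuine, and your proposed closure does not work as written. For the flat framework $(G,(\bar p,0))$ in $\R^{d}$ the rigidity matrix has a zero block in the last coordinate, so the ``out-of-plane'' infinitesimal flexes form an $n$-dimensional space, not an $(n-d)$-dimensional one; there is no evident duality pairing them with a single maximum-rank stress. And your worry about linear families of symmetric matrices is well founded: for instance the real span of $\left(\begin{smallmatrix}0&0&0\\0&0&1\\0&1&0\end{smallmatrix}\right)$ and $\left(\begin{smallmatrix}0&0&1\\0&0&0\\1&0&0\end{smallmatrix}\right)$ has trivial common kernel yet every element is singular. (This particular example fails your stronger surjectivity condition, so it does not refute your approach; but you have not shown that surjectivity of $\psi$ excludes such behaviour, and doing so is exactly the missing work.)

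A far shorter route avoids the stress-matrix issue entirely. If $G$ is rigid in $\R^{d}$ then, for each vertex $v$, the cone $(G-v)^{v}$ of $G-v$ with apex $v$ is a supergraph of $G$ on the same vertex set and is therefore rigid in $\R^{d}$; by Theorem~\ref{thm:coning}(i) this forces $G-v$ to be rigid in $\R^{d-1}$. Hence $G$ is vertex-redundantly rigid in $\R^{d-1}$, and Tanigawa's theorem (every vertex-redundantly $(d{-}1)$-rigid graph on at least $d{+}1$ vertices is globally $(d{-}1)$-rigid) finishes the proof in one line. Your stress-matrix approach could likely be pushed through with additional algebraic-geometric input of Gortler--Healy--Thurston type, but the combinatorial argument above is both complete and considerably simpler.
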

		
		It is well-known that every globally $d$-rigid graph on at least $d+2$ vertices
		is $(d+1)$-connected. Hence, combining Theorem \ref{theorem:dimensiondropping} with our results leads to
		tight bounds concerning global $d$-rigidity.
		In particular, we obtain the following corollaries.
		
		\begin{cor}
			\label{ddimGGR}
			Let $G=(V,E)$ be a graph on $n$ vertices and let $d\geq 2$. If either\\
             (i) \(\delta(G) \geq \left\lceil\frac{n+d-2}{2}\right\rceil\) and \(n \geq 29d\), or\\
             (ii) \(\eta(G) \geq n+d-2\) and \(n \geq d(d+2)\),\\
            then \(G\) is globally rigid in \(\mathbb{R}^{d-1}\).
		\end{cor}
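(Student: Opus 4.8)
The plan is to deduce global $(d-1)$-rigidity by first establishing ordinary rigidity in $\R^d$ under each hypothesis and then invoking the dimension-dropping theorem (Theorem~\ref{theorem:dimensiondropping}). The point is that the two numerical hypotheses are exactly the optimal sufficient conditions for $d$-rigidity proved earlier, so the argument amounts to matching the thresholds and then applying a single black-box reduction.

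For case (i), I would note that the assumption $n\geq 29d$ places us in the regime of Theorem~\ref{thm:delta:(n+d)/2-1}, which gives $f(n,d)=\lceil\frac{n+d-2}{2}\rceil$. Since the hypothesis $\delta(G)\geq\lceil\frac{n+d-2}{2}\rceil$ meets this threshold, $G$ is rigid in $\R^d$. For case (ii), the assumption $n\geq d(d+2)$ places us in the regime of Theorem~\ref{thm:intro:g:d-1}, which gives $g(n,d)=n+d-2$; the hypothesis $\eta(G)\geq n+d-2$ therefore again forces $G$ to be rigid in $\R^d$.

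In both cases we have shown that $G$ is rigid in $\R^d$ with $d\geq 2$, and so Theorem~\ref{theorem:dimensiondropping} immediately yields that $G$ is globally rigid in $\R^{d-1}$, as claimed. There is no genuine obstacle here: all of the difficulty has already been absorbed into the rigidity theorems of the preceding sections and the dimension-dropping result of \cite{Jext}. The only thing to verify carefully is that the ceiling bound $\lceil\frac{n+d-2}{2}\rceil$ and the integer bound $n+d-2$ agree exactly with the optimal values $f(n,d)$ and $g(n,d)$ in the stated ranges, which is precisely what those two theorems assert.
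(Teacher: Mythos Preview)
Your proof is correct and matches the paper's approach exactly: the paper presents this corollary without an explicit proof, simply noting that it follows by combining Theorem~\ref{theorem:dimensiondropping} with the rigidity results Theorems~\ref{thm:delta:(n+d)/2-1} and~\ref{thm:intro:g:d-1}, which is precisely what you have written out.
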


        It was proved in \cite{JJchapter} that if $\delta(G)\geq
		\frac{n+1}{2}$, then
		$G$ is globally rigid in $\R^2$. This statement is a special case of the following corollary. 
        A graph $G$ is called {\it redundantly $d$-rigid} if $G-e$ is $d$-rigid
		      for all $e\in E(G)$.
		
		\begin{cor}
			\label{2dimGGR}
			Let $G=(V,E)$ be a graph on $n\geq 5$ vertices. If $\eta(G)\geq n+1$,
            then
			$G$ is globally rigid in $\R^2$.
		\end{cor}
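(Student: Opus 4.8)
The plan is to separate the problem into a generic case and the four sporadic exceptional graphs, reducing both to results already established. First I would note that Theorem~\ref{theorem:dimensiondropping} with $d=3$ states exactly that $3$-rigidity implies global $2$-rigidity. Hence, if $G$ is \emph{not} isomorphic to $W_5$, $B_6$, $C_7^1$, or $C_7^2$, then the hypothesis $\eta(G)\ge n+1$ says precisely that $\deg(u)+\deg(v)\ge n+1$ for every non-adjacent pair $u,v$ (the condition being vacuous and the conclusion immediate when $G$ is complete, since $K_n$ with $n\ge 4$ is globally $2$-rigid). Theorem~\ref{rigid graph in R^3} then gives that $G$ is rigid in $\R^3$, and Theorem~\ref{theorem:dimensiondropping} upgrades this to global rigidity in $\R^2$. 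This settles every graph except the four named ones.

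It therefore remains to verify that each of $W_5$, $B_6$, $C_7^1$, $C_7^2$ is itself globally rigid in $\R^2$; crucially, these cannot be handled through Theorem~\ref{theorem:dimensiondropping}, as they have fewer than $3n-6$ edges and so are not $3$-rigid. Here I would use the well-known characterization that a graph on at least four vertices is globally rigid in $\R^2$ if and only if it is $3$-connected and redundantly rigid in $\R^2$. For the first property, each of the four graphs is non-complete, has $n\ge 5$ vertices, and meets the degree-sum bound with $r=1$, so Lemma~\ref{4conn} gives $3$-connectivity at once.

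The only substantive point left is redundant rigidity in $\R^2$, i.e.\ that $G-e$ is $2$-rigid for every edge $e$ of each of the four graphs. Since each graph has a large automorphism group, it suffices to check one representative in each edge-orbit (a handful of cases in each graph). For a fixed $e$, I would exhibit an ordering $v_1,\dots,v_n$ of $V(G-e)$ in which $v_1v_2v_3$ is a triangle and every later $v_i$ has at least two neighbours among $v_1,\dots,v_{i-1}$; building the graph in this order by $0$-extensions from a rigid triangle (and harmless extra edge additions) exhibits a rigid spanning subgraph, so $G-e$ is rigid. For instance, in $C_7^2=K_7-C_7$, labelling the removed cycle $0\,1\,2\cdots6$ and deleting $e=\{0,2\}$, the order $1,3,6,4,5,0,2$ works; the second (difference-$3$) orbit of $C_7^2$, the diagonal and bipartite orbits of $C_7^1$, and the short cases of $W_5$ and $B_6$ are handled identically, each reducing by reverse $0$-extensions down to a copy of $K_4$ or $K_4-e$.

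The main obstacle is thus bookkeeping rather than conceptual: confirming that in each deleted-edge graph a suitable construction ordering exists. This is a finite verification, kept short by symmetry. I expect the only delicate point to be the two degree-$3$ vertices created by deleting an edge: these must be placed last in the ordering, after their neighbours are already present, so that they attach with the required two edges.
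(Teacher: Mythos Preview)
Your proof is correct and follows essentially the same route as the paper: apply Theorem~\ref{rigid graph in R^3} together with Theorem~\ref{theorem:dimensiondropping} for the generic case, and handle the four exceptions via the $3$-connectivity plus redundant $2$-rigidity characterization of global $2$-rigidity. The only difference is cosmetic: the paper simply asserts that the four exceptional graphs are $3$-connected and redundantly $2$-rigid (citing \cite{JJconnrig} for the characterization), whereas you spell out the $3$-connectivity via Lemma~\ref{4conn} and sketch an explicit $0$-extension verification of redundant rigidity.
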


		\begin{proof}
			We are done by Theorems \ref{rigid graph in R^3}
			and \ref{theorem:dimensiondropping}, unless $G$ is one of the
			four exceptions. 

              The four exceptions are all
			$3$-connected and redundantly 2-rigid, so they are
			also
			globally 2-rigid by \cite{JJconnrig}.
		\end{proof}

		\section*{Acknowledgements}
		
		The first author was supported by the
		MTA-ELTE Momentum Matroid Optimization Research Group and the National Research, Development and Innovation Fund of Hungary, financed under the ELTE TKP 2021‐NKTA‐62 funding scheme.
		The second author was supported by the  China Scholarship Council No. 202406290008 and the China Postdoctoral Science Foundation No. 2024M754212.

	\end{document}